\theoremstyle{plain}
\newtheorem{theorem}{Theorem}[section]
\newtheorem{lemma}[theorem]{Lemma}
\newtheorem{proposition}[theorem]{Proposition}
\newtheorem{assumption}[theorem]{Assumption}
\theoremstyle{remark}
\newtheorem{remark}[theorem]{Remark}
\numberwithin{equation}{section}
\newcommand{\C}{\mathbb{C}}
\newcommand{\R}{\mathbb{R}}
\newcommand{\Z}{\mathbb{Z}}
\newcommand{\F}{\mathcal{F}}
\renewcommand{\Im}{\operatorname{Im}}
\renewcommand{\Re}{\operatorname{Re}}
\newcommand{\I}{\infty}
\newcommand{\abs}[1]{\left\lvert #1\right\rvert}
\newcommand{\wha}[1]{\widehat{#1}}
\newcommand{\norm}[1]{\left\lVert #1\right\rVert}
\newcommand{\Lebn}[2]{\left\lVert #1 \right\rVert_{L^{#2}}}
\newcommand{\Sobn}[2]{\left\lVert #1 \right\rVert_{H^{#2}}}
\newcommand{\Jbr}[1]{\left\langle #1 \right\rangle}
\newcommand{\tnorm}[1]{\lVert #1\rVert}
\newcommand{\m}[1]{\mathcal{#1}}
\def\Sch{{\mathcal S}} 
\def\({\left(}
\def\){\right)}
\def\<{\left\langle}
\def\>{\right\rangle}
\def\le{\leqslant}
\def\ge{\geqslant}
\def\d{{\partial}}
\def \wt{\widetilde}
\def \wh{\widehat}
\def \f{\phi}
\def \e{\varepsilon}
\def \l{\lambda}
\def \d{\delta}
\def \D{\Delta}
\def \pa{\partial}
\def \n{\nabla}
\def \a{\alpha}
\def \b{\beta}
\def \n{\nabla}
\def \t{\theta}
\def \P{\Phi}
\def \F{\mathcal{F}}
\def \mL{\mathcal{L}}
\def \mN{\mathcal{N}}
\def \mG{\mathcal{G}}
\def \mR{\mathcal{R}}
\newcommand{\eps}{\varepsilon}
\DeclareMathOperator{\Lip}{Lip}
\newcommand{\todayd}{\the\year/\the\month/\the\day}
\theoremstyle{definition}
\begin{document}
\title[Long range scattering for NLS]
{Long range scattering for nonlinear Schr\"odinger equations with critical homogeneous nonlinearity in three space dimensions}

\author[S. Masaki]{Satoshi MASAKI}
\address[]{Division of Mathematical Science, Department of Systems Innovation, Graduate School of Engineering Science, Osaka University, Toyonaka, Osaka, 560-8531, Japan}
\email{masaki@sigmath.es.osaka-u.ac.jp}
\author[H. Miyazaki]{Hayato MIYAZAKI}
\address[]{Advanced Science Course, Department of Integrated Science and Technology, National Institute of Technology, Tsuyama College, Tsuyama, Okayama, 708-8509, Japan}
\email{miyazaki@tsuyama.kosen-ac.jp}
\author[K. Uriya]{Kota URIYA}
\address[]{Department of Applied Mathematics, Faculty of Science, Okayama University of Science, Okayama, Okayama, 700-0005, Japan}
\email{uriya@xmath.ous.ac.jp}

\keywords{Nonlinear Schr{\"o}dinger equations, Scattering}
\subjclass[2010]{35B44, 35Q55, 35P25}
\date{}

\maketitle
\vskip-5mm
\begin{abstract}
In this paper, we consider the final state problem for the nonlinear Schr\"odinger equation with a homogeneous nonlinearity of the critical order which is not necessarily a polynomial.
In \cite{MM2}, the first and the second authors consider one- and two-dimensional cases and
gave a sufficient condition on the nonlinearity for that the corresponding equation admits
a solution that behaves like a free solution with or without a logarithmic phase correction.
The present paper is devoted to the study of the three-dimensional case, 
in which it is required that a solution converges to a given asymptotic profile in a faster rate than in
the lower dimensional cases. 
To obtain the necessary convergence rate, 
we employ the end-point Strichartz estimate and modify a time-dependent regularizing operator, introduced in \cite{MM2}.
Moreover, we present a candidate of the second asymptotic profile to the solution. 
\end{abstract} 
\section{Introduction}

In this paper, we consider large time behavior of solutions to
 nonlinear Schr\"odinger equation
\begin{equation}\label{eq:NLS}\tag{NLS}
	i \pa_t u + \Delta u = F(u). 
\end{equation}
Here, $(t,x) \in \R^{1+d}$ and $u=u(t,x)$ is a complex-valued unknown function. 
We suppose that the nonlinearity $F$ is homogeneous of degree $1+2/d$, that is, $F$ satisfies
\begin{equation}\label{eq:cond1}
	F(\lambda u) = \lambda^{1+\frac2d} F(u)
\end{equation}
for any $u\in \C$ and $\lambda >0$. 
This is the continuation of the previous study in \cite{MM2}.
In \cite{MM2}, we consider one- and two-dimensional cases and
give a sufficient condition on $F:\C \to \C$ for existence of a modified wave operator, that is, for
that \eqref{eq:NLS} admits a nontrivial solution which asymptotically behaves like
\begin{equation}\label{eq:masymptotic}
	u_p (t) = (2it)^{-\frac{d}{2}}  e^{i \frac{|x|^2}{4t}} \wha{u_{+}}\(\frac{x}{2t}\) \exp \( -i \mu \left| \wha{u_{+}}\(\frac{x}{2t}\) \right|^{\frac{2}{d}} \log t \)
\end{equation}
as $t\to\I$, where $u_+$ is a given final data and $\mu$ is a real constant determined by $F$.
We would remark that it is applicable to non-polynomial nonlinearities such as $|\Re u|\Re u$. 
The aim here is to extend the previous result to the case $d=3$.
Because the exponent $1 + 2/d$ becomes small in high dimensions, we face
some difficulties such as lack of differentiability of the nonlinearity.
As for the nonlinearity $F(u) = \l |u|^{2/3}u$,
Ginibre-Ozawa~\cite{GO} showed that a class of solutions
has the asymptotic profile \eqref{eq:masymptotic} with $\mu=\l$. 
However, it seems that no other homogeneous nonlinearity is treated so far.

\medskip
In \cite{MM2}, a sufficient condition on the nonlinearity $F$ for existence of a modified wave operator
is given in terms of the ``Fourier coefficients'' of the nonlinearity. 
The crucial step of construction of a modified wave operator
is to find an asymptotic behavior that actually takes place.
For this part, specifying a \emph{resonant part} of the nonlinearity, which 
determines the shape of the asymptotic behavior, is essential.
A new ingredient in \cite{MM2} is the expansion of the nonlinearity into an \emph{infinite sum} via Fourier series expansion.
For example, the nonlinearity $F(u)= |\Re u | \Re u$ is written as
\begin{equation}\label{eq:example1}
	|\Re u| \Re u = \frac{4}{3\pi} |u| u + \sum_{m\neq 0} \frac{4(-1)^{m+1}}{\pi (2m-1)(2m+1)(2m+3)} |u|^{1-2m}u^{1+2m}.  
\end{equation}
The first gauge-invariant term $\frac{4}{3\pi} |u| u$ is the resonant part and 
the remaining infinite sum is a \emph{non-resonant part}.
It turns out that the possible asymptotic behavior of solutions to \eqref{eq:NLS} with $d=2$ is \eqref{eq:masymptotic} with $\mu=4/3\pi$.

Once we find a ``right'' asymptotic behavior,
it is possible to construct a solution around the asymptotic profile.
For this, we shall show that the non-resonant part is negligible for large time.
Note that the non-resonant part is a sum of ``gauge-variant'' nonlinearities.
Because of the gauge-variant property, the non-resonant part has different phase from the solution itself.
The disagreement causes an extra time decay effect (cf.~stationary phase) and so the effect of the non-resonant term
becomes relatively small for large time.
The case where the non-resonant part is a finite sum is previously treated in \cite{ShT,HNST,HWN}.
The main technical issue to treat general nonlinearity lies in 
showing that the non-resonant part which consists of infinitely many term is still acceptable (see \cite{MM2}).

\medskip
In this paper, we will extend the technique to the three-dimensional case.
The argument in \cite{MM2} is not directly applicable.
To construct a solution around a given asymptotic profile in three dimensions,
it is required that the solution converges to 
the asymptotic profile faster than in the one- and two-dimensional cases. 
Since the rate is controlled by the 
time decay rate of the non-resonant part, we need a good decay property of the non-resonant part.
However, lack of differentiability in three-dimensions then disturbs obtaining such fast decay property. 

To overcome this difficulty, we modify the 
argument of \cite{MM2} in two respects. 
The first one is that 
we enlarge the function space to construct a solution by
employing the end-point Strichartz estimate. 
This enable us to reduce the necessary condition on the rate of convergence
of the solution. We notice that the end-point Strichartz 
estimate is peculiar to the space of dimensions other than two 
(see \cite{KT}). 
The second respect is to improve the estimate for the high 
frequency part of the non-resonant part, which yields a 
better decay rate of the solution. 
However, we still assume that the given final data has very small low-frequency
part. We remark that if a final data has a non-negligible low-frequency part then 
there appear other kinds of asymptotic behavior (see 
\cite{HN02, HN04, HN11, HN15, N, NS}).
\medskip

In order to present the main result, let us briefly recall the decomposition of the nonlinearity in \cite{MM2}.
We identify a homogeneous nonlinearity $F$ and $2\pi$-periodic function $g$ as follows.
A homogeneous nonlinearity $F$ is written as
\begin{equation}\label{eq:id1}
	F(u) = |u|^{\frac53} F\( \frac{u}{|u|} \).
\end{equation}
We then introduce a $2\pi$-periodic function $g(\theta)=g_F(\theta)$ by $g_F(\theta) = F(e^{i\theta})$.
Conversely, for a given $2\pi$-periodic function $g$, we can construct a homogeneous nonlinearity $F=F_g:\C \to \C$ by
$F_g(u) = |u|^{\frac53} g\( \arg u \)$ if $u\neq 0$ and
$F_g(u) = 0$ if $u=0$.
Since $g(\theta)$ is $2\pi$-periodic function, it holds, at least formally, that $g(\theta)=\sum_{n\in\Z} {g}_n e^{in\theta}$, where
\begin{equation}\label{eq:gn}
	{g}_n := \frac{1}{2\pi} \int_0^{2\pi} g(\t) e^{-in\t}d\t.
\end{equation}
Remark that the expansion gives us
\begin{equation}\label{eq:exp}
	F(u) = g_0 |u|^{\frac53} + {g}_1 |u|^{\frac23}u + \sum_{n\neq 0,1} {g}_n |u|^{\frac53-n} u^n.
\end{equation}
\subsection{Main results}

Set $\Jbr{a}=(1+|a|^2)^{1/2}$ for $a \in \C$ or $a\in \R^3$. Let $s$, $m \in \R$. The weighted Sobolev space on $\R^{3}$ is defined by $H^{m,s} = \{u \in \Sch'\;  ;\; \Jbr{i\n}^m \Jbr{x}^s u \in L^2 \}$. 
Let us simply write $H^{m} = H^{m,0}$.
We denote by $\norm{g}_{\mathrm{Lip}}$ the Lipschitz norm of $g$.

Throughout the paper, we suppose the following:
\begin{assumption}\label{asmp:main}
Assume that $F$ is a homogeneous nonlinearity of degree $5/3$
 such that a corresponding $2\pi$-periodic function
$g(\theta)$ satisfies $g_0=0$, $g_1 \in \R$,
and
\[
\sum_{n \in \Z}|n|^{1+\eta} |g_n| < \I
\]
for some $\eta>0$, where $g_n$ is given in \eqref{eq:gn}.
In particular, $g$ is Lipschitz continuous.
\end{assumption}

\begin{theorem}[Existence and uniqueness]\label{thm:main}
Suppose that the nonlinearity $F$ satisfies Assumption \ref{asmp:main} for $\eta>0$.
Fix $\d \in (3/2,5/3)$ so that $\delta-3/2<2\eta$.
Then, there exists $\eps_0=\eps_0(\norm{g}_{\mathrm{Lip}})$ such that 
for any $u_+ \in H^{0,2} \cap {H}^{-\d}$ satisfying
$\Lebn{\wha{u_+}}{\I}<\eps_0$ there exists $T>0$ and a solution $u \in C([T,\I); L^2(\R^3))$ of \eqref{eq:NLS} 
which satisfies
\begin{equation}\label{eq:mainest}
\sup_{t \in [T, \I)}t^{b} \Lebn{u(t) -u_p(t)}{2}  < \I
\end{equation}
for any $b<\delta/2$,
where
\begin{equation}\label{eq:masymptotic2}
u_p (t) := (2it)^{-\frac{3}{2}}  e^{i \frac{|x|^2}{4t}} \wha{u_{+}}\(\frac{x}{2t}\) \exp \( -i g_1 \left| \wha{u_{+}}\(\frac{x}{2t}\) \right|^{\frac{2}{3}} \log t \).
\end{equation}
The solution is unique in the following sense:
If $\tilde{u} \in C([\tilde{T},\I);L^2(\R^3))$ solves \eqref{eq:NLS} and satisfies \eqref{eq:mainest}
for some $\tilde{T}$ and $b>3/4$ then $\tilde{u}=u$.
\end{theorem}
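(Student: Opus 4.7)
The plan is to solve \eqref{eq:NLS} around $u_p$ by running a contraction argument on the Duhamel equation for the remainder $v := u - u_p$, namely
\begin{equation*}
v(t) = -i\int_t^{\infty} e^{i(t-s)\Delta}\bigl[F(u_p+v)(s) - F(u_p)(s) + R(s)\bigr]\,ds,
\end{equation*}
where $R := (i\p_t + \Delta)u_p - F(u_p)$ is the residual produced by the ansatz. I would set up the contraction on a ball in a Lorentz-in-time/modulation-in-space norm of the type used in \cite{MM2}, weighted so as to enforce the decay $\|v(t)\|_{L^2} \lesssim t^{-b}$ for $b < \delta/2$. The novelty in three dimensions is that the ball can be closed using the endpoint Strichartz pair $(2,6)$, which is unavailable in two dimensions; its better integrability in time is what permits $b$ to be taken all the way up to $\delta/2$ rather than only $3/4$.

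The analytic core is the estimate for $R$. Using \eqref{eq:exp} and $g_0 = 0$ gives
\[
F(u_p) = g_1 |u_p|^{2/3}u_p + \sum_{n \neq 0,1} g_n |u_p|^{5/3-n} u_p^n.
\]
Direct differentiation of $u_p$ in \eqref{eq:masymptotic2} shows that $(i\p_t + \Delta)u_p$ matches the \emph{resonant} term $g_1|u_p|^{2/3}u_p$ to leading order, precisely because of the choice $\mu = g_1$ in the logarithmic phase; the remainder from this part is essentially $t^{-2}\log t$ in $L^2$ and is harmless. For each \emph{non-resonant} term, the MDFM factorization of the free evolution together with a modified version of the time-dependent regularizing operator of \cite{MM2} (the second improvement announced in the introduction) rewrites the summand as an oscillatory integral with Fresnel phase $e^{i(1-n)|x|^2/(4t)}$; integration by parts in this phase supplies the extra decay in $t$. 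The hypothesis $u_+ \in H^{0,2}$ provides the high-frequency regularity required to carry out the integration by parts, while $u_+ \in H^{-\delta}$ controls the low-frequency part which has no useful oscillation. Summation in $n$ is permitted by the weighted convergence $\sum_n |n|^{1+\eta}|g_n| < \infty$ of Assumption \ref{asmp:main}; the room $\delta - 3/2 < 2\eta$ is calibrated exactly so that the gain distributed across frequencies matches the total decay $t^{-b}$ demanded of $v$.

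The nonlinear difference $F(u_p + v) - F(u_p)$ is H\"older of exponent $2/3$ with constant proportional to $\|g\|_{\mathrm{Lip}}$, so it is dominated pointwise by $|u_p|^{2/3}|v| + |v|^{5/3}$. The dominant linear-in-$v$ piece carries a prefactor bounded by $\|u_p\|_{L^\infty}^{2/3} \lesssim t^{-1}\|\wha{u_+}\|_{L^\infty}^{2/3}$, so the smallness $\|\wha{u_+}\|_{L^\infty} < \eps_0$ makes the associated operator a contraction on a small ball once combined with the endpoint Strichartz integration. Uniqueness under the weaker hypothesis $b > 3/4$ is obtained by feeding the difference of two candidate solutions into the same estimate; since $\delta > 3/2$ one has $\delta/2 > 3/4$, so the existence range $b < \delta/2$ contains values above $3/4$ and is therefore contained in the uniqueness range.

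The principal obstacle I expect is the non-resonant estimate at large $|n|$. The low differentiability of $F$ (only H\"older $2/3$ in $u$) obstructs repeated integration by parts, while the Fresnel phases $(1-n)|x|^2/(4t)$ carry coefficients growing with $n$, so derivatives cannot be paid freely on $|u_p|^{5/3-n}u_p^n$. The modification of the regularizing operator is designed exactly to decouple the gain in $t$ from the cost in $n$, so that the series converges under the $|n|^{1+\eta}$-weighted assumption while the decay rate $t^{-b}$ for $b$ close to $\delta/2$ is simultaneously reached. This is the technical heart of the theorem, and is where the two new ingredients announced in the introduction---endpoint Strichartz plus modified time-dependent regularizer---have to be combined.
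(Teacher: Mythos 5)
Your proposal takes essentially the same route as the paper: write the remainder $v=u-u_p$ as a Duhamel integral, decompose the residual into a resonant part that cancels the logarithmic phase and a non-resonant sum, handle the non-resonant sum by frequency decomposition plus integration by parts in the Fresnel phase, and close a contraction in the weighted norm $\sup_t t^b\|\cdot\|_{L^2}$ using the endpoint Strichartz pair $(2,6)$; uniqueness comes from running the same contraction estimate under the hypothesis $b>3/4$. One inaccuracy worth flagging: you claim the resonant residual is $O(t^{-2}\log t)$ in $L^2$, but the phase correction $\exp(-ig_1|\wha{u_+}|^{2/3}\log t)$ is only $2/3$-H\"older in $\wha{u_+}$, so $\wha{w}$ inherits only $H^{\delta}$-regularity with $\delta<5/3$; consequently $\mathcal{E}_{\mathrm{r}}$ decays only like $t^{-\delta/2}(\log t)^3$, the \emph{same} rate as the non-resonant part — it is this regularity loss, not $t^{-2}$, that sets the ceiling $b<\delta/2$. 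Relatedly, the endpoint Strichartz estimate does not by itself push $b$ up to $\delta/2$: the range $b<\delta/2$ is set by the error estimates, and $b>3/4$ by the contraction; what endpoint Strichartz actually buys is the removal of the auxiliary $L^q_tL^r_x$ component from the contraction space that is needed in one and two dimensions. Both points are cosmetic and do not undermine your architecture.
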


The following theorem describes the asymptotic behavior more precisely.

\begin{theorem}[Asymptotic behavior]\label{thm:main2}
Under the same assumption as in Theorem \ref{thm:main}, the solution $u(t)$ given in Theorem \ref{thm:main}
satisfies
\begin{equation}\label{eq:mainest2}
\sup_{t \in [T,\I)} t^b \norm{u-u_p-\mathcal{V}}_{L^\I_t([t,\I);L^2_x) \cap L^2_t([t,\I);L^6_x)} <\I
\end{equation}
for any $b<\d/2$, where
\begin{equation}\label{def:calV}
	\mathcal{V}(t)
	:= - \F^{-1} \sum_{n\neq0,1} \frac{g_n}{2(in)^{3/2}}  \(\frac{|\wha{u_+}|^{\frac53-n}
	\wha{u_+}^n i^{-\frac32 n} e^{-int|\cdot|^2} e^{-in g_1 |\wha{u_{+}}|^{\frac{2}{3}} \log t}}{1+in(n-1)t|\cdot|^2} 
	\)\(\frac{\xi}n\).
\end{equation}
In the $L^\I([T,\I);L^2)$-topology, $\mathcal{V}$ is small: For any $b<\d/2$,
\begin{equation}\label{eq:behavior1}
\sup_{t \in [T,\I)} t^b \norm{\mathcal{V}}_{L^\I_t([t,\I);L^2_x)} <\I.
\end{equation}
In the $L^2([T,\I);L^6)$-topology, it holds that
\begin{equation}\label{eq:behavior2}
\sup_{t \in [T,\I)} t^b \norm{\mathcal{V}-v_p}_{L^2_t([t,\I);L^6_x)} <\I
\end{equation}
for any $b<\d/2$, where
\begin{equation}\label{def:vp}
	v_p(t)
	:= -i \sum_{n\neq0,1} g_n \frac1{t^{-1}-i \frac{n-1}n \Delta }
 	|u_p(t)|^{\frac53-n}
 	u_p(t)^n.
\end{equation}
\end{theorem}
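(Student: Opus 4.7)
My plan is to pick up from Theorem~\ref{thm:main} and show that the remainder $w:=u-u_p-\m{V}$ satisfies the improved Strichartz bound \eqref{eq:mainest2}, while analysing $\m{V}$ directly to obtain \eqref{eq:behavior1}--\eqref{eq:behavior2}.

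First, I would derive the equation for $w$. Using the expansion \eqref{eq:exp}, the nonlinearity splits as $F(u_p)=g_1|u_p|^{2/3}u_p+\sum_{n\neq0,1}g_n|u_p|^{5/3-n}u_p^n$, while the proof of Theorem~\ref{thm:main} supplies an identity of the form $(i\p_t+\D)u_p=g_1|u_p|^{2/3}u_p+R_p$ with $R_p$ an explicit error of fast time decay. Consequently
\begin{equation*}
(i\p_t+\D)w=\bigl[F(u)-F(u_p)\bigr]-R_p-R_{\m{V}},\qquad w(t)\to0\text{ as }t\to\I,
\end{equation*}
where $R_{\m{V}}:=(i\p_t+\D)\m{V}-\sum_{n\neq0,1}g_n|u_p|^{5/3-n}u_p^n$ quantifies the failure of $\m V$ to be an exact solution of the non-resonant equation. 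Applying the Duhamel representation based at $t=\I$ together with the inhomogeneous end-point Strichartz estimate in $\R^3$ gives
\begin{equation*}
\|w\|_{L^{\I}([t,\I);L^2)\cap L^2([t,\I);L^6)}\lesssim \|F(u)-F(u_p)\|_{L^2_tL^{6/5}_x}+\|R_p\|_{L^2_tL^{6/5}_x+L^1_tL^2_x}+\|R_{\m{V}}\|_{L^2_tL^{6/5}_x+L^1_tL^2_x}.
\end{equation*}
The first term is controlled by the Hölder-type bound $|F(u)-F(u_p)|\lesssim(|u|^{2/3}+|u_p|^{2/3})|u-u_p|$ together with the estimate from Theorem~\ref{thm:main} and a contraction argument, while $R_p$ is estimated as in the proof of that theorem.

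The main new step is to verify that $\m V$ almost solves the non-resonant equation. A direct computation gives $|u_p|^{5/3-n}u_p^n(t,x)=(2t)^{-5/2}i^{-3n/2}e^{in|x|^2/(4t)}H_n(x/(2t))$ with $H_n(y):=|\wha{u_+}(y)|^{5/3-n}\wha{u_+}(y)^n e^{-ing_1|\wha{u_+}(y)|^{2/3}\log t}$. Taking the Fourier transform, changing variable $y=x/(2t)$, and completing the square reduces the analysis to the Gaussian integral $\int e^{int|y-\xi/n|^2}H_n(y)\,dy$; a stationary-phase expansion around $y=\xi/n$ extracts the leading contribution of $\F[|u_p|^{5/3-n}u_p^n]$, which is proportional to $(in)^{-3/2}t^{-1}H_n(\xi/n)e^{-it|\xi|^2/n}$ times a dimensional constant, plus a remainder gaining an extra power of $t^{-1}$. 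The denominator $1+in(n-1)t|\xi/n|^2$ in \eqref{def:calV} is tuned so that the commutator of $i\p_t$ with the $t$- and $\xi$-dependent multiplier, combined with the $|\xi|^2$-symbol from $\D$, reproduces the leading non-resonant source term-by-term; the resulting $R_{\m V}$ is a collection of lower-order tails whose $L^2_tL^{6/5}_x$-norm is summable in $n$ under $\sum|n|^{1+\eta}|g_n|<\I$. The constraint $\d-3/2<2\eta$ enters through the interpolation between the $n$-dependent time-decay rates required to reach $t^{-b}$ for $b<\d/2$.

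Finally, I obtain \eqref{eq:behavior1}--\eqref{eq:behavior2} by direct analysis of \eqref{def:calV}. For \eqref{eq:behavior1}, Plancherel and the dilation $\xi\mapsto\xi/n$ reduce $\|\m V\|_{L^2}$ to estimating each summand via the bound $|1+in(n-1)t|\xi/n|^2|\gtrsim\max(1,|n-1|t|\xi|^2/|n|)$; splitting the $\xi$-integral at $|\xi|\sim(|n-1|t/|n|)^{-1/2}$ and combining $\wha{u_+}\in L^\I$ (from $u_+\in H^{0,2}$) with the low-frequency vanishing of $\wha{u_+}$ coming from $u_+\in H^{-\d}$ produces the rate $t^{-b}$ uniformly in $n$. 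For \eqref{eq:behavior2}, the Fourier multiplier of $(t^{-1}-i(n-1)\D/n)^{-1}$ equals $t/(1+i(n-1)t|\xi|^2/n)$, and substituting the stationary-phase expansion of $\F[|u_p|^{5/3-n}u_p^n]$ described above into the definition of $v_p$ recovers exactly the formula \eqref{def:calV} up to the stationary-phase remainder, which enjoys the desired $L^2_tL^6_x$ bound by Sobolev embedding. The principal technical difficulty is uniformity of all the estimates in $n$ so that the series in $\m V$ and $v_p$ converge at the required rate; this is precisely where the quantitative summability of $g_n$ and the strict inequality $\d-3/2<2\eta$ are used.
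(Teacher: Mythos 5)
Your proposal takes a genuinely different route from the paper. You propose to treat $w=u-u_p-\mathcal{V}$ directly, writing an equation $(i\partial_t+\Delta)w=[F(u)-F(u_p)]-R_p-R_{\mathcal{V}}$ and controlling the residual $R_{\mathcal{V}}$ by a stationary-phase expansion of $\mathcal{F}\bigl[|u_p|^{5/3-n}u_p^n\bigr]$. The paper does not do this. Instead it leaves the Duhamel term $\mathcal{E}_{\mathrm{nr}}=i\int_t^\infty U(t-s)\mathcal{N}(u_p)\,ds$ intact, splits $\mathcal{N}(u_p)=\mathcal{P}+\mathcal{Q}$ with a time- and mode-dependent frequency cutoff $\mathcal{K}_{\psi}$, performs an integration by parts in $s$ only on the low-frequency part $\mathcal{P}$, and then identifies $\mathcal{V}$ as the boundary term $I_1$ of that integration by parts. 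Thus the factor $(1+in(n-1)t|\xi/n|^2)^{-1}$ is not designed so that $\mathcal{V}$ approximately solves a PDE; it is $A_n(t)=(1+i(1-1/n)t|x|^2)^{-1}$, arising from the algebraic identity $E^{1-1/n}(s)=A_n(s)\,\partial_s\bigl(s\,E^{1-1/n}(s)\bigr)$.

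The essential gap in your plan is the stationary-phase step. A Gaussian stationary-phase expansion of $\int e^{int|y-\xi/n|^2}H_n(y)\,dy$ around $y=\xi/n$, with a remainder gaining a factor $t^{-1}$, requires $H_n$ to be (roughly) twice differentiable, since the first correction term is proportional to $(nt)^{-1}\Delta H_n(\xi/n)$. Here $H_n(y)=|\widehat{u_+}(y)|^{5/3-n}\widehat{u_+}(y)^n e^{-ing_1|\widehat{u_+}(y)|^{2/3}\log t}=\phi_n$, and Lemma~\ref{lem1:2a} only gives $\phi_n\in H^\delta$ with $\delta<5/3<2$. For non-polynomial homogeneous $F$ of degree $5/3$, this lack of regularity is exactly the obstruction the paper is built to overcome; it is why the cutoff $\mathcal{K}_\psi$ and the flatness property $\nabla\psi(0)=0$ of Lemma~\ref{mol:1.1}(ii) are introduced. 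Your approach pushes the regularity problem into the stationary-phase remainder, but does not say how to close the estimate there, and the needed bound is precisely what fails at the available regularity. Relatedly, you do not verify that the leading stationary-phase term matches the $\partial_t$- and $-|\xi|^2$-contributions from $\mathcal{V}$; this cancellation is plausible but nontrivial, and in the paper's argument no such cancellation is asserted (indeed $\mathcal{V}$ is not an approximate solution of the non-resonant equation; it is a boundary term). Finally, your explanation of the role of the threshold $\delta-3/2<2\eta$ as ``interpolation between $n$-dependent time-decay rates'' is vague; in the paper this inequality appears concretely in Lemma~\ref{useful:1:1}, where one chooses a H\"older exponent $p_1>2$ so that $\frac{\delta}{2}-\frac{3}{2p_1}<\eta$, converting the loss from $\|B^2\|_{L^{p_1}}$ into an $|n|^\eta$ growth that is then absorbed by $\sum|n|^{1+\eta}|g_n|<\infty$. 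Your treatment of \eqref{eq:behavior1}--\eqref{eq:behavior2} is closer in spirit to the paper's (direct Fourier-side estimate of $\mathcal{V}$ and of $\mathcal{V}-v_p$ via the identity $\mathcal{V}=v_p-\sum_n \tfrac{g_n}{2i^{3(n+1)/2}}C_n(t)M(t/n)D(t)(U(-\tfrac1{4nt})-1)\phi_n(t)$), but the argument for \eqref{eq:mainest2} would need to be redone along the paper's lines.
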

\begin{remark}
A straightforward estimate shows 
$\norm{v_p}_{L^2_t([T,\I);L^6_x)} \le C T^{-\frac12}$.
However, we do not have a lower bound of $v_p$ so far.
If this estimate is sharp then $v_p$ and $\mathcal{V}$ are
true second asymptotic profiles of the solution in $L^2_tL^6_x$-topology.
On the other hand, if $v_p$ (and so $\mathcal{V}$) is small also in $L^2_tL^6_x$-topology,
then the asymptotics \eqref{eq:mainest2} holds with $\mathcal{V}=0$, which means
the asymptotic behavior of $u_p$ is the same as that in the case $F(u)=\l |u|^{2/3}u$.
\end{remark}
\begin{remark}
In the case $F(u)=\l |u|^{2/3}u$,
our estimate \eqref{eq:mainest2} is an improvement of that in \cite{GO}.
More precisely, it improves possible range of $b$ and includes the endpoint case $L^2_tL^6_x$.
\end{remark}

\begin{remark}\label{rmk:MTT}
Under suitable additional assumptions such as $u_+ \in \dot{H}^{-2-}$, we have
$\mathcal{V}(t) = \tilde{F}(|\frac{2t}{x}|^{6/5} u_p(t)) + o(t^{-1})$ in $L^2$,
where $\tilde{F}(u)$ is a homogeneous nonlinearity 
such that the corresponding Fourier coefficients are $\tilde{g}_n=\frac{1}{n(1-n)} g_n$.
The asymptotic profile $\tilde{F}(|\frac{2t}{x}|^{6/5}u_p(t))$ is a natural extension of those 
used in \cite{MTT,ShT}.
\end{remark}
\begin{remark}\label{rmk:example}
Our theorem can be applied to
$F(u) = |\Re u|^{\frac23} \Re u$.
The corresponding periodic function is
$g(\theta) = |\cos \t|^{\frac23} \cos \t$
and so 
\[
g_n = \left\{ 
\begin{aligned}
&  \frac{(-1)^{\frac{n-1}2} \Gamma(\frac{11}6) \Gamma(\frac{3n-5}6)}{\sqrt\pi 
	\Gamma(- \frac{1}3) \Gamma(\frac{3n+11}6)}  && n\text{: odd},\\
& 0 && n \text{: even}.
\end{aligned}
\right.
\]
In particular, $g_n=O(|n|^{-8/3})$ as $|n|\to\I$.
See Appendix \ref{sec:appendix1} for the details.
\end{remark}

\begin{remark}
Theorem \ref{thm:main} implies that when $F$ satisfies Assumption \ref{asmp:main} and $g_1=0$, \eqref{eq:NLS} admits a nontrivial solution which has the asymptotic profile
\begin{equation}\label{eq:fasymptotic}
	u_p(t)= (2it)^{-\frac{d}{2}}  e^{i \frac{|x|^2}{4t}} \wha{u_{+}}\(\frac{x}{2t}\) .
\end{equation}
Notice that this is nothing but the asymptotic behavior of the linear solution $e^{it\Delta} u_+$,
and so that our theorem implies that the equation admits an asymptotic free solution in this case.
The nonlinearity $F(u)=|\Re u|^\frac23 \Re u - i|\Im u|^\frac23 \Im u$ is such an example
(See Appendix \ref{sec:appendix1}).
\end{remark}

\subsection{Strategy and Improvements}
Let us briefly outline the proof of Theorems \ref{thm:main} and \ref{thm:main2}.
The strategy is the same spirit as in \cite{MM2}.
By the decomposition \eqref{eq:exp} and by
Assumption \ref{asmp:main}, we write
\begin{equation}\label{eq:decomp}
	F(u) = g_1 |u|^{\frac23}u + \sum_{n\neq 0,1} g_n |u|^{\frac53-n} u^n.
\end{equation}
Denote
\[
	\mG (u) := g_1 |u|^{\frac23}u, \quad  \mN(u)  := \sum_{n\neq 0,1} g_n |u|^{\frac53-n} u^n.
\]
$\mG$ corresponds to the resonant part and $\mN$ to the non-resonant part.
We then introduce a formulation in \cite{HWN} (see also \cite{HNST,ShT, HN06}).
Let $U(t)=e^{it\D}$.
Introduce a multiplication operator $M(t)$ and a dilation operator $D(t)$ by
\begin{equation*} 
	M(t) = e^{\frac{i |x|^2}{4t}}, \quad (D(t)f)(x) = (2it)^{-\frac{3}{2}} f\(\frac{x}{2t}\).
\end{equation*}
They are isometries on $L^2(\R^3)$. Then, $u_p$ is written as $u_p(t) = M(t)D(t) \widehat{w}(t)$ with
\begin{equation}\label{def:upw}
	\widehat{w}(t) := \wha{u_{+}} \exp(-i g_1 |\wha{u_{+}}|^{\frac{2}{3}} \log t).
\end{equation}
Note that $|\wha{w}(t.x)|=|\wha{u_+}(x)|$.
We regard the equation \eqref{eq:NLS} as
\[
\mL(u-u_{p}) = F (u) -F (u_p) - \mL u_p + \mG (u_{p}) + \mN(u_p),
\]
where $\mL=i\pa_t + \Delta_x$.
A computation shows that it is rewritten as the following integral equation;
\begin{equation}\label{eq:inteq}
	u(t) -u_p(t) = i \int_t^{\I} U(t-s) \( F (u) - F (u_p) \)(s) ds + \mathcal{E}_{\mathrm{r}}(t) + \mathcal{E}_{\mathrm{nr}}(t),
\end{equation}
where external terms are defined by
\begin{align}\label{def:external}
	\mathcal{E}_{\mathrm{r}}(t):={}&\mR(t) \wha{w} -i \int_t^{\I} U(t-s) \mR (s) \mG (\wha{w}) (s) \frac{ds}{s}, \\
	\label{def:external2}
	\mathcal{E}_{\mathrm{nr}}(t):={}&i \int_t^{\I} U(t-s)\mN (u_p)(s) ds,
\end{align}
with
\[
\mR(t) = M(t) D(t)\(U\(-\frac{1}{4t}\) -1\)
\]
(see \cite{HWN} for the details).

For $R>0$, $T\ge1$, and $b>0$, we define a complete metric space
\begin{align*}
X_{T, b, R} &:= \{v \in C([T,\I); L^2(\R^3)) ;\  \norm{v-u_p}_{X_{T, b}} \le R \}, \\
\norm{v}_{X_{T,b}} &:= \sup_{t \in [T, \I)}t^{b} \norm{v(t)}_{L^2(\R^3)} = \sup_{t \in [T, \I)}t^{b} \norm{v}_{L^\I_t ([t,\I); L^2(\R^3))}, \\
d (u,v) &:= \norm{u-v}_{X_{T,b}} .
\end{align*}
It is easy to see that $X_{T_1, b_1, R_1} \subset X_{T_2, b_2, R_2}$ if $(1 \le )T_1 \le T_2$, $b_1\ge b_2$, and $R_1\le R_2$.
When the asymptotic profile $u_p$ is suitably chosen, we can construct a solution in $X_{T,b,R}$
for some $T,b,R$.
The appropriateness can be stated as the existence of $T_0\ge 1$ such that
\begin{equation}\label{eq:main_step_of_proof}
	\norm{\mathcal{E}_{\mathrm{r}} + \mathcal{E}_{\mathrm{nr}}}_{X_{T_0,b}} <\I,
\end{equation}
where $\mathcal{E}_{\mathrm{r}}$ and $\mathcal{E}_{\mathrm{nr}}$ are given in 
\eqref{def:external} and \eqref{def:external2}, respectively.
The solvability of \eqref{eq:inteq} under this assumption will be discussed in Section \ref{sec:abst}.
Then, it will turn out that we need to choose $b>3/4$.
\begin{remark}
The condition for $b$ is $b>d/4$ in dimensions $d=1,2$ (see \cite{MM2}), and so
the above condition is a natural extension.
\end{remark}
\begin{remark}\label{aux1:1}
An improvement lies in the definition of $X_{t,b}$-norm.
In the previous paper \cite{MM2}, the norm has one more term
\begin{equation}\label{eq:auxy_norm}
	\sup_{t \in [T, \I)}t^{b}\norm{v}_{L^{q}_t([t,\I) ,L^r_x(\R^d))},
\end{equation}
where $(q,r)=(4,\I)$ if $d=1$ and $(q,r)=(4,4)$ if $d=2$ are admissible pairs.
In the three-dimensional case, we are able to remove this kind of auxiliary norm by means of the endpoint Strichartz estimate.
Theorem \ref{thm:main2} suggests that the exponent $b$ for which \eqref{eq:auxy_norm} can be bounded
actually depends on the choice of $(q,r)$.
\end{remark}

The main step of the proof of main theorems is the following.
\begin{proposition} \label{prop:main}
Let $3/2 < \d < 5/3$. 
Assume that $\sum_{n\in\Z}|n|^{1+\eta}|g_n| < \I$ for some $\eta> \frac12(\delta-\frac{3}2)$.
For any $u_+ \in H^{0,2} \cap {H}^{-\d}$,
there exists a constant $C=C(g_1,\norm{u_+}_{H^{0,2} \cap {H}^{-\d}})$ such that
\begin{equation}
	\norm{\mathcal{E}_\mathrm{r} + \mathcal{E}_\mathrm{nr}}_{L^{\I}_t ([T, \I) ; L^2)} 
	\le CT^{-\frac{\d}2} \Jbr{\log T}^3 \sum_{n \in \Z} |n|^{1+\eta} |g_n|
\end{equation}
and
\begin{equation}
	\norm{\mathcal{E}_\mathrm{r} + \mathcal{E}_\mathrm{nr} - \mathcal{V}}_{L^{\I}_t ([T, \I) ; L^2)\cap L^2([T,\I);L^6)} 
	\le CT^{-\frac{\d}2} \Jbr{\log T}^3 \sum_{n \in \Z} |n|^{1+\eta} |g_n|
\end{equation}
holds for all $T\ge 2$, where $\mathcal{V}$ is given in \eqref{def:calV}.
\end{proposition}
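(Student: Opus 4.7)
The plan is to estimate $\mathcal{E}_\mathrm{r}$ and $\mathcal{E}_\mathrm{nr}$ separately in the end-point pair $L^\I_t L^2\cap L^2_t L^6$, with the profile $\mathcal{V}$ emerging from a single integration by parts applied to $\mathcal{E}_\mathrm{nr}$ against the oscillatory phase of each non-resonant mode. Throughout, the end-point Strichartz estimate of Keel--Tao, available precisely because $d=3$, will be invoked to absorb inhomogeneous contributions in both norms simultaneously.

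First I would treat the resonant term $\mathcal{E}_\mathrm{r}$. Using $\mR(t)=M(t)D(t)(U(-1/(4t))-1)$ together with the multiplier inequality $\norm{(U(-1/(4t))-1)f}_{L^2}\lesssim t^{-\d/2}\hSobn{f}{\d}$ for $0\le\d\le 2$, combined with $\wha{u_+}\in H^2$ (from $u_+\in H^{0,2}$) and fractional Leibniz applied to the Lipschitz phase $\exp(-ig_1|\wha{u_+}|^{2/3}\log t)$ (at the cost of a $\Jbr{\log t}$ factor), I expect $\norm{\mR(t)\wha w}_{L^2}\lesssim t^{-\d/2}\Jbr{\log t}$. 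The Duhamel piece of $\mathcal{E}_\mathrm{r}$ is then handled by applying the same multiplier estimate to $\mG(\wha w)(s)=g_1|\wha w|^{2/3}\wha w$ and invoking the inhomogeneous end-point Strichartz estimate; the weight $s^{-1}$ in \eqref{def:external} renders the $s$-integral absolutely convergent and yields the claimed $T^{-\d/2}$ rate in both end-point norms.

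The delicate part is $\mathcal{E}_\mathrm{nr}$. Substituting the explicit form of $u_p(s)$, each mode $n\neq 0,1$ contributes
\begin{equation*}
g_n |u_p(s)|^{\frac53-n}u_p(s)^n = g_n(2s)^{-\frac52}i^{-\frac{3n}{2}}e^{in|x|^2/(4s)}
\bigl[|\wha{u_+}|^{\frac53-n}\wha{u_+}^{\,n}e^{-ing_1|\wha{u_+}|^{2/3}\log s}\bigr]\bigl(\tfrac{x}{2s}\bigr),
\end{equation*}
whose crucial feature is the oscillation $M(s)^n=e^{in|x|^2/(4s)}$. Passing to the Fourier side and changing variables $\xi\mapsto\xi/n$, the combined phase of $U(t-s)$ and $M(s)^n$ yields an effective phase whose $s$-derivative is proportional to $\frac{n-1}{n}|\xi|^2$; a single integration by parts in $s$ against this phase produces the boundary term at $s=t$, whose Fourier symbol is exactly $(1+in(n-1)t|\xi|^2)^{-1}$ as in \eqref{def:calV}, together with an interior remainder. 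The remainder should decay pointwise like $s^{-\frac52-\frac{\d}{2}}$ once $\wha{u_+}\in H^2$ is used to absorb the $\partial_\xi$ generated by the integration by parts and $u_+\in H^{-\d}$ is used to tame the resulting symbol near $\xi=0$, and integrating via the end-point Strichartz estimate then produces the $t^{-\d/2}$ error in $L^\I_tL^2\cap L^2_tL^6$.

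The final step is the summation over $n$. Each integration by parts costs an $n$-dependent factor---an $|n|^{1/2}$ from the $(in)^{-3/2}$ in the boundary term, and a further $|n|$ from the H\"older-type bound $\bigl||z|^{\frac53-n}z^n-|w|^{\frac53-n}w^n\bigr|\lesssim|n|(|z|^{\frac23}+|w|^{\frac23})|z-w|$ that will be needed inside the interior remainder---and the hypothesis $\sum_n|n|^{1+\eta}|g_n|<\I$ with $\eta>(\d-3/2)/2$ is calibrated precisely to absorb this loss while retaining the overall $T^{-\d/2}\Jbr{\log T}^3$ rate. The main obstacle is carrying out the integration by parts uniformly in $n$ in a norm strong enough to sustain the $L^2_tL^6$ component: this forces a careful use of the inhomogeneous end-point Strichartz estimate together with the low-frequency control from $u_+\in H^{-\d}$, so that the $n$-dependent constants appearing in the error term grow no faster than $|n|^{1+\eta}$ and the resulting series converges.
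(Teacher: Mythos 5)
Your proposal correctly identifies the overall architecture (cite-or-prove the $\mathcal{E}_\mathrm{r}$ estimate, integrate by parts in $s$ against the mode-$n$ oscillation, read off $\mathcal{V}$ as the boundary term, absorb the $n$-loss into $\sum|n|^{1+\eta}|g_n|$, use endpoint Strichartz for the $L^2_tL^6_x$ component, use $u_+\in H^{-\delta}$ to tame $A_n$ near $\xi=0$). But there is a genuine gap in the interior remainder of the integration by parts, and it is precisely the gap this paper is written to fill.

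When you integrate by parts in $s$, the operator $\partial_s U(n/(4s))$ produces $U(n/(4s))(\partial_s - \tfrac{in}{2s^2}\Delta)$, so a full Laplacian must land on $\phi_n(s)=|\wha w(s)|^{5/3-n}\wha w(s)^n$. Your claim that ``$\wha{u_+}\in H^2$ is used to absorb the $\partial_\xi$ generated by the integration by parts'' does not work, because $\Delta$ falls on $\phi_n$, not on $\wha{u_+}$ itself, and $\phi_n$ is a \emph{nonlinear} image of $\wha{u_+}$ under a map that is only $\Lip\frac53$, i.e.\ $C^{1,2/3}$ (Lemma~\ref{lem4:5a}). No amount of regularity on $\wha{u_+}$ can put $\phi_n$ in $H^2$; the fractional chain rule (Lemma~\ref{lem4:4a}) caps it at $H^{5/3-}$. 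Your one-derivative Lipschitz bound $||z|^{5/3-n}z^n-|w|^{5/3-n}w^n|\lesssim|n|(|z|^{2/3}+|w|^{2/3})|z-w|$ gives exactly one derivative's worth of control, whereas the integration by parts requires two. This is the ``lack of differentiability'' obstruction discussed in the introduction.

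The missing ingredient is the time- and mode-dependent regularizing operator $\m{K}=\m{K}_{\psi_0}(t,n)=\psi_0(i\nabla/(|n|\sqrt t))$ with $\psi_0(x)=e^{-|x|^2/4}$. One decomposes $\mathcal{N}(u_p)=\m{P}+\m{Q}$ into the low-frequency part $\m{P}$ (where $\m{K}$ is applied to $\phi_n$, so that $\Delta\m{K}\phi_n$ is controlled by $\|\phi_n\|_{\dot H^\delta}$ rather than $\|\phi_n\|_{\dot H^2}$ at the cost of a factor $s^{-1}|n|$) and the high-frequency part $\m{Q}$, which is not integrated by parts at all but is bounded directly using the rate $\|\m{K}-1\|_{\m{L}(\dot H^{\delta}\to L^2)}\lesssim(|n|\sqrt t)^{-\delta}$ from Lemma~\ref{mol:1.1}(ii). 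The fact that this lemma allows $\theta=\delta>3/2$ is nontrivial: it requires the flatness $\nabla\psi_0(0)=0$, which is the improvement over \cite{MM2,HWN} that makes the $d=3$ case tractable. Without this cutoff, your boundary term is correct but your interior remainder has no valid estimate.

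A smaller point: your boundary term formula is correct only because the paper's $I_1$ decomposes further as $\mathrm{IV}+\mathrm{V}+\mathrm{VI}$ with $\mathrm{VI}=\mathcal{V}$; the pieces $\mathrm{IV}$ (from $U(n/(4t))-1$) and $\mathrm{V}$ (from $\m{K}-1$) must be shown to be $O(t^{-\delta/2})$. Your scheme never introduces these error terms because it never introduces $\m{K}$, but then the interior remainder diverges, so the net effect is that the proof does not close.
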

The first estimate shows that \eqref{eq:main_step_of_proof} holds for $3/4 < b < \d/2$.
We then obtain Theorem \ref{thm:main}.
The second estimate is a main step of the proof of \eqref{eq:mainest2}.
Combining some other estimates on $\mathcal{V}$, we obtain Theorem \ref{thm:main2}.

The main technical part lies in the estimate of $\mathcal{E}_{\mathrm{nr}}$.
We briefly recall previous results to explain how to handle the term. 
In \cite{HNST}, Hayashi, Naumkin, Shimomura, and Tonegawa introduced
an argument to show the time decay of 
the non-resonant part by means of integration by parts.
The decay comes from the fact that the phase of the non-resonant part is different from 
that of the linear part.
Their method however requires higher differentiability of the nonlinearity. 
In order to reduce the required differentiability of the nonlinearity, 
Hayashi, Naumkin, and Wang~\cite{HWN} employ a time-dependent smoothing operator (essentially a cutoff to
the low-frequency part) and
apply the integration by parts only to the low-frequency part. 
In \cite{MM2},  
the frequency cutoff is chosen dependently also on the ``Fourier mode''
to treat an infinite Fourier series expansion of the nonlinearity.

The time decay estimates of the high-frequency part in \cite{HWN, MM2}
are based on the fact that the regularizing operator converges to the identity operator as time goes to infinity. 
So, the only way to improve the estimate would seem to ``lessen'' the high-frequency part by modifying the 
regularizing operator so that it converges in a faster rate.
However, if we do so, 
then the estimate for the {low-frequency} part becomes worse.
The loss may not be recovered by refining the estimate on the low-frequency part
because such a refinement requires differentiability more than that nonlinearities satisfying \eqref{eq:cond1} possess. 

To resolve the difficulty, we improve the estimate for the high-frequency part in another way.
We work with a regularizing operator which has a \emph{flatness property}.
This enable us to use a regularizing operator even \emph{milder} than that used in \cite{HWN,MM2}.
For the details, see Remark \ref{rmk:regu}.
As a result, it reduces the required differentiability of the nonlinearity.
The idea is also applicable to the two-dimensional case and improves 
the previous result in \cite{MM2}.
However, we do not pursue it here.
\medskip

The rest of the paper is organized as follows.
In Section \ref{sec:preliminary}, we summarize useful estimates.
The improve estimate for regularizing operator is discussed here.
Section \ref{sec:abst} is devoted to the proof of main theorems in an abstract form.
Then, it will turn out that our main result is a consequence of Proposition \ref{prop:main}.
Finally, we prove Proposition \ref{prop:main} in Section \ref{sec:mainlem}.

\section{Preliminaries}\label{sec:preliminary}
\subsection{An estimate for regularizing operator}

To obtain time decay property of the non-resonant part $\mathcal{E}_{\mathrm{nr}}$,
we improve an estimate for the high-frequency part.
In this subsection, we consider general space dimensions $d$.
We denote the homogeneous Sobolev space on $\R^d$ by $\dot{H}^{m} =\{ u \in \Sch'\;  ;\; (-\D)^{\frac{m}{2}}u \in L^2\}$. 
Let $\psi \in \mathcal{S}$.
We introduce a regularizing operator $\m{K}_\psi=\m{K}_\psi(t,n)$ by
\begin{equation}\label{def:Kpsi}
	\m{K}_\psi := \psi\( \frac{i\nabla}{|n| \sqrt{t}} \):=\F^{-1} 
	\psi \( \frac{\xi}{|n| \sqrt{t}} \) \F.
\end{equation}
We have an equivalent expression
\[
\m{K}_\psi f = C_d ((|n| \sqrt{t} )^{d}  [\F^{-1}\psi]( |n| \sqrt{t} \cdot ) * f)(x).
\]
The following is an improvement of \cite{MM2} by using a kind of isotropic property of $\psi$ near the origin. 
\begin{lemma}[Boundedness of $\m{K}$] \label{mol:1.1}
Take $\psi \in \mathcal{S}$ and set $\m{K}_\psi$ as in \eqref{def:Kpsi}.
Let $s\in \R $ and $\theta \in [0,2]$.
Assume $\nabla \psi(0)=0$ if $\theta \in (1,2]$. For any $t>0$ and $n\neq 0$, the followings hold.
\begin{enumerate}
\renewcommand{\labelenumi}{(\roman{enumi})}
\item
$\m{K}_\psi$ is a bounded linear operator on $L^2$ and satisfies $\norm{\m{K}_\psi}_{\m{L}(L^2)} \le \norm{\psi}_{L^\I}$.
Further, $\m{K}_\psi$ commutes with $\nabla$. In particular, $\m{K}_\psi$ is a bounded linear operator on $\dot{H}^s$ and satisfies $\norm{\m{K}_\psi}_{\m{L}(\dot{H}^s)} \le \norm{\psi}_{L^\I}$.
\item 
$\m{K}- {\psi}(0)$ is a bounded linear operator from $\dot{H}^{s+\theta}$ to $\dot{H}^{s}$ with norm
$$\norm{\m{K}_\psi-\psi(0)}_{\m{L}(\dot{H}^{s+\t} ,\dot{H}^{s})} \le Ct^{-\frac{\theta}{2}}|n|^{-{\t}}.$$
\end{enumerate}
\end{lemma}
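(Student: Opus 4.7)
The plan is to regard $\m{K}_\psi$ as the Fourier multiplier with symbol $m_{n,t}(\xi) := \psi(\xi/(|n|\sqrt{t}))$. Part (i) is then an immediate consequence of Plancherel's identity: since $\|m_{n,t}\|_{L^\infty}=\|\psi\|_{L^\infty}$, the multiplier is bounded on $L^2$ with the stated norm, and because the Fourier transform converts $\nabla$ into multiplication by $i\xi$ the operator commutes with $\nabla$; conjugation with $(-\Delta)^{s/2}$ upgrades this to the claimed bound on $\dot{H}^s$.

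For part (ii) my strategy is to reduce the operator bound to the pointwise symbol estimate
$$|m_{n,t}(\xi)-\psi(0)| \le C_\theta\,(|n|\sqrt{t})^{-\theta}|\xi|^\theta,\qquad \xi\in\R^d,$$
after which one more application of Plancherel gives
$$\|(\m{K}_\psi-\psi(0))f\|_{\dot{H}^s}^2 = \int |\xi|^{2s}|m_{n,t}(\xi)-\psi(0)|^2|\wh{f}(\xi)|^2\,d\xi \le C\,t^{-\theta}|n|^{-2\theta}\|f\|_{\dot{H}^{s+\theta}}^2,$$
which is the desired estimate after taking square roots.

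The substantive content is therefore the pointwise bound $|\psi(y)-\psi(0)|\le C|y|^\theta$ for all $y\in\R^d$. I would prove it by splitting into the cases $|y|\le 1$ and $|y|\ge 1$. For $|y|\ge 1$ the trivial estimate $|\psi(y)-\psi(0)|\le 2\|\psi\|_{L^\infty}$ suffices, since $|y|^\theta\ge 1$ whenever $\theta\ge 0$. For $|y|\le 1$ and $\theta\in[0,1]$, the Lipschitz bound $|\psi(y)-\psi(0)|\le \|\nabla\psi\|_{L^\infty}|y|$ gives what is needed, because $|y|\le|y|^\theta$ on the unit ball. For $|y|\le 1$ and $\theta\in(1,2]$, the flatness hypothesis $\nabla\psi(0)=0$ together with the Schwartz regularity of $\psi$ upgrades the second-order Taylor expansion at the origin to $|\psi(y)-\psi(0)|\le \tfrac12\|\nabla^2\psi\|_{L^\infty}|y|^2$, and then $|y|^2\le|y|^\theta$ for $|y|\le 1$ and $\theta\le 2$.

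There is no genuine obstacle here; the argument is essentially a Fourier multiplier estimate. The point of the lemma, as flagged in the introduction, is conceptual: the flatness hypothesis $\nabla\psi(0)=0$ is precisely what lets one push beyond the Lipschitz regime and reach the full range $\theta\in(1,2]$, providing an extra factor of $(|n|\sqrt{t})^{-1}$ of decay compared with what the Lipschitz-based argument of \cite{HWN,MM2} delivers. This gain is what the rest of the paper will exploit when estimating the high-frequency portion of the non-resonant term $\mathcal{E}_\mathrm{nr}$.
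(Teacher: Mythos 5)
Your proof is correct and somewhat more streamlined than the paper's. Both arguments are, at bottom, Fourier multiplier estimates, and both exploit the flatness $\nabla\psi(0)=0$ via a second-order Taylor expansion, but the expansion is applied in different places. You Taylor-expand the \emph{symbol}: you reduce everything to the pointwise bound $|\psi(y)-\psi(0)|\le C|y|^\theta$ and then finish with one application of Plancherel. The paper instead works on the \emph{kernel} side: it writes $\mathcal{K}_\psi-\psi(0)$ as convolution against $(|n|\sqrt t)^d\,\mathcal F^{-1}\psi(|n|\sqrt t\,\cdot)$, uses $\int y\,\mathcal F^{-1}\psi(y)\,dy=0$ (the kernel-side incarnation of $\nabla\psi(0)=0$) to insert the free term $y\cdot\nabla\phi(x)$, Taylor-expands the translation $e^{-iy\cdot\xi}-1+iy\cdot\xi$ to get $\|\phi(\cdot-y)-\phi+y\cdot\nabla\phi\|_{L^2}\le C|y|^\theta\|\phi\|_{\dot H^\theta}$, and then integrates the kernel in $y$. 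Your route avoids the convolution representation and the separate moment-vanishing computation entirely, which makes it shorter; the paper's route is a direct descendant of the argument in \cite{HWN,MM2}, and is written so as to make the comparison with those works (where only $\theta\le1$ was available) transparent. Substantively they carry the same content, with the same dependence of the constant on $\|\psi\|_{L^\infty}$, $\|\nabla\psi\|_{L^\infty}$, $\|\nabla^2\psi\|_{L^\infty}$.
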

\begin{proof}
The first item is obvious.
Let us prove the second. We consider the case $\theta \in (1,2]$ and $\nabla \psi (0) = 0$, the other case is the same as in \cite{HWN}.
It suffices to show the case $s=0$. By assumption $\nabla \psi(0)=0$, we have
\[
	\int_{\R^d} y \F^{-1} \psi (y) dy =0
\]
For $\f \in \dot{H}^{\theta}$, one sees from the equivalent expression that
\begin{align*}
	&{}[(\m{K}_\psi-{\psi}(0))\f](x) \\
	={}& C_d ({|n| \sqrt{t}} )^{d} \int_{\R^d} \F^{-1} \psi(|n| \sqrt{t}y) (\f(x-y) - \f (x) + y\cdot\nabla \f(x)) dy.
\end{align*}
Remark that
\[
	\norm{\f(\cdot- y) - \f + y\cdot\nabla \f}_{L^2_x}
	=\norm{(e^{-iy\cdot\xi}-1+iy\cdot\xi)\F\f}_{L^2_\xi}
	\le C |y|^\theta \norm{\f}_{\dot{H}^\theta}
\]
for $\theta \in [1,2]$. By these estimates,
\begin{align*}
\Lebn{(\m{K}_\psi-{\psi}(0))\f}{2} &\le C_d ({|n| \sqrt{t}} )^{d} \int_{\R^d} |\F^{-1} \psi(|n| \sqrt{t}y)| |y|^\theta \norm{\f}_{\dot{H}^\theta} dy \\
&\le C_\psi t^{-\frac{\theta}{2}}|n|^{-{\t}}\norm{\f}_{\dot{H}^\theta}.
\end{align*}
The proof is completed.
\end{proof}

\begin{remark}\label{rmk:regu}
It is the property $\nabla \psi(0)=0$ that allows us to take $\t \in (1,2]$ in Lemma \ref{mol:1.1} (ii).
The property implies that the corresponding cutoff operator $\mathcal{K}_{\psi}$ is a ``flat'' cutoff.
It was not used in \cite{HWN,MM2} and so $\t$ is restricted to $\theta \le 1$.
If $d\ge2$, the time decay $t^{-1/2}$ for the high-frequency part, which is given with $\theta=1$, is not sufficient.
To recover the lack of decay, the operator of the form
$\psi\( {|n|^{-1} t^{-\sigma/2}} (i\nabla) \)$ was used with $\sigma >1$.
This makes the estimate of the high-frequency part better but that of the low-frequency part worse, in view of  the time decay rate and order in $n$.
In particular, the low-frequency part generated by the operator is considerably large
and so it causes some loss in the integration-by-parts procedure.
\end{remark}

\begin{remark}
It is easy to see that if $\psi \in \mathcal{S}$ satisfies $\psi\equiv1$ in the neighborhood of the origin,
we have no upper bound on $\theta$ in Lemma \ref{mol:1.1}.
\end{remark}

\subsection{Fractional chain rule of homogeneous functions of order $5/3$}


Let us collect useful estimates on the estimate of the nonlinearity satisfying \eqref{eq:cond1}.
In view of the expansion \eqref{eq:exp}, we consider nonlinearity of the form $|u|^{5/3-n}u^n$.
To this end, we introduce a Lipschitz $\mu$ norm $(\mu>1)$. 
For a multi-index $\a=(\a_1,\a_2)\in(\Z_{\ge0})^2$, define $\pa^{\a}=\pa_z^{\a_1} \pa_{\overline{z}}^{\a_2}$.
Put $\mu = N + \b$ with $N \in \Z$ and $\b \in (0,1]$. For a function $G \in C^{N}( \R^2, \C)$, we define
\begin{align*}
\norm{G}_{\Lip \mu} = \sum_{|\a| \le N-1} \sup_{z \in \C \setminus \{0\}} \frac{|\pa^{\a}G(z)|}{|z|^{\mu - |\a|}} + \sum_{|\a| = N}\sup_{z \neq z'} \frac{|\pa^{\a}G(z)-\pa^{\a}G(z')|}{|z-z'|^{\b}}.
\end{align*}
If $G \in C^{N}(\R^2, \C)$ and $\norm{G}_{\Lip \mu} < \I$, then we write $G \in \Lip \mu$.

%

\begin{lemma} \label{lem4:5a}
$\tnorm{|z|^{\frac53-n}z^n}_{\Lip \frac53} \le C\Jbr{n}^\frac53$
for some $C>0$ and for any $n\in \Z$.
\end{lemma}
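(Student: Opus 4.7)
The key observation is that $G_n(z):=|z|^{\frac53-n}z^n$ is positively homogeneous of degree $5/3$ on $\C\setminus\{0\}$. Indeed, for $n\ge 0$ this is immediate, and for $n<0$ one rewrites $z^n=\bar z^{|n|}/|z|^{2|n|}$ to obtain $G_n(z)=|z|^{\frac53-|n|}\bar z^{|n|}$; in both cases $|G_n(z)|=|z|^{5/3}$, so the zero-th order contribution to $\|G_n\|_{\mathrm{Lip}\,5/3}$ is exactly $1$. It thus suffices to bound the $(2/3)$-Hölder seminorm of $\partial G_n$, where $\partial$ stands for either $\partial_z$ or $\partial_{\bar z}$.

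The plan is to compute $\partial G_n$ explicitly using Wirtinger calculus together with the identity $\bar z\,z^n=|z|^2 z^{n-1}$. This should give
\[
\partial_z G_n(z)=\tfrac{5/3+n}{2}\,|z|^{\frac53-n}z^{n-1},\qquad
\partial_{\bar z}G_n(z)=\tfrac{5/3-n}{2}\,|z|^{\frac53-n-2}z^{n+1},
\]
both positively homogeneous of degree $2/3$, with pointwise bound $|\partial G_n(z)|\le C\langle n\rangle|z|^{2/3}$. A further differentiation (again using $\bar z\,z^k=|z|^2 z^{k-1}$) shows the second-order derivatives are positively homogeneous of degree $-1/3$ and satisfy $|\nabla\partial G_n(z)|\le C\langle n\rangle^{2}|z|^{-1/3}$. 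These two quantitative bounds are the only ingredients I need; the precise shape of the constants in $n$ is what will drive the final exponent $5/3$.

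Next, to bound the Hölder seminorm of $h:=\partial G_n$, I exploit homogeneity to reduce to a compact configuration. Since $h$ has degree $2/3$, the quotient $|h(z)-h(z')|/|z-z'|^{2/3}$ is scale-invariant, so it is enough to control it when $\max(|z|,|z'|)=1$; say $|z'|=1$, $|z|\le1$. I split into two cases. When $|z-z'|\ge 1/4$ the triangle inequality and the sup bound $|h|\le C\langle n\rangle$ on the closed unit disk already give a ratio $\le C\langle n\rangle$. When $|z-z'|<1/4$, necessarily $|z|\ge 1/2$ and the segment $[z,z']$ lies in the annulus $\{|w|\ge 1/4\}$ on which $|\nabla h|\le C\langle n\rangle^{2}$, hence $|h(z)-h(z')|\le C\langle n\rangle^{2}|z-z'|$ by the mean value theorem. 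The crucial interpolation step is
\[
|h(z)-h(z')|\le \min\bigl(2C\langle n\rangle,\;C\langle n\rangle^{2}|z-z'|\bigr)
\le C'\langle n\rangle^{1-\frac23}\langle n\rangle^{\,2\cdot\frac23}\,|z-z'|^{\frac23}=C'\langle n\rangle^{\frac53}|z-z'|^{\frac23},
\]
obtained by bounding the $\min$ by the weighted geometric mean with weights $(1/3,2/3)$. Combining the cases and undoing the scaling gives $[\partial G_n]_{C^{2/3}(\C\setminus\{0\})}\le C\langle n\rangle^{5/3}$, and together with the trivial zero-order term this proves the lemma.

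The main obstacle is precisely the sharpness of the exponent: the crude bounds $|h|\lesssim\langle n\rangle$ and $|\nabla h|\lesssim\langle n\rangle^{2}$ would, via the mean value theorem alone, only yield the seminorm bound $\langle n\rangle^{2}$, which is strictly weaker than the claimed $\langle n\rangle^{5/3}$ and would propagate to a loss in the subsequent nonlinear estimates. The interpolation between the sup and Lipschitz bounds, balancing the two estimates at the critical scale $|z-z'|\sim\langle n\rangle^{-1}$, is what recovers the correct exponent $5/3=1+2/3$; verifying that this balancing is admissible even when the two points have very different moduli (handled by the scaling reduction above) is the only subtlety.
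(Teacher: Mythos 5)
Your proof is correct, and it takes a genuinely different technical route from the paper's. Both arguments exploit the positive homogeneity of $G_n$ and of $\partial G_n$ to reduce the H\"older seminorm to a compact configuration, but the normalizations and the interpolation mechanisms differ. The paper factors out the coefficient $(\tfrac{5}{6}+\tfrac{n}{2})$ (contributing one factor of $\Jbr{n}$), rescales so that the second point is $1$, passes to polar coordinates near $z=1$, and caps the angular term by the elementary inequality $|e^{i(n-1)\theta}-1|\le 2^{1/3}|(n-1)\theta|^{2/3}$, which yields the H\"older seminorm $\Jbr{n}^{2/3}$ for the normalized factor and hence $\Jbr{n}\cdot\Jbr{n}^{2/3}=\Jbr{n}^{5/3}$ overall. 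You instead keep the $n$-dependent coefficient inside $h=\partial G_n$, rescale so that $\max(|z|,|z'|)=1$, and interpolate between the sup bound $|h|\lesssim \Jbr{n}$ and the mean-value bound $|\nabla h|\lesssim\Jbr{n}^2$ on the annulus, via $\min(a,b)\le a^{1/3}b^{2/3}$, which balances at the critical scale $|z-z'|\sim\Jbr{n}^{-1}$. These are the same interpolation principle in disguise: the paper's inequality $|e^{i\tau}-1|\le 2^{1/3}|\tau|^{2/3}$ is itself obtained by balancing $|e^{i\tau}-1|\le 2$ against $|e^{i\tau}-1|\le|\tau|$. Your version makes the critical scale $\Jbr{n}^{-1}$ explicit and avoids polar-coordinate manipulations, at the modest extra cost of also estimating the second derivative of $G_n$; the paper's version works entirely at first-derivative level but requires the somewhat ad hoc identity $\tfrac{|\tilde F(z)-\tilde F(w)|}{|z-w|^{2/3}}=\tfrac{|\tilde F(z/w)-\tilde F(1)|}{|z/w-1|^{2/3}}$ and a separate treatment of $w=0$.
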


\begin{proof}
Set $F(z)=|z|^{5/3-n}z^n$.
By definition of the Lipschitz norm,
\begin{align*}
	\norm{F}_{\Lip \frac53} = &\sup_{z \in \C \setminus \{0\}} \frac{|F(z)|}{|z|^{5/3}} 
	&+ \sup_{z \neq w} \frac{|F_{z}(z)-F_{z}(w)|}{|z-w|^{2/3}} + \sup_{z \neq w} \frac{|F_{\bar{z}}(z)-F_{\bar{z}}(w)|}{|z-w|^{2/3}}.
\end{align*}
Obviously, the first term is bounded.
In what follows, we estimate the second term. The third term is handled similarly.

Introduce $\wt{F}(z)$ by
$$
F_{z}(z) = \(\frac{5}{6}+\frac{n}{2}\)z^{-\frac{1}6+\frac{n}2}\bar{z}^{\frac{5}{6}-\frac{n}2}
=: \(\frac{5}{6}+\frac{n}{2}\)\wt{F}(z).
$$
To estimate the second term, it suffices to consider the case $w=1$.
Indeed, if $w=0$ then
\[
	\frac{|\wt{F}(z)-\wt{F}(w)|}{|z-w|^{2/3}} =  \frac{|\wt{F}(z)|}{|z|^{2/3}} \le C,
\]
otherwise, denoting $z$ and $w$ in the phase amplitude form $z = |z|e^{i\t}$ and $w = |w|e^{i\tau}$,
we have
\begin{align*}
	\frac{|\wt{F}(z)-\wt{F}(w)|}{|z-w|^{2/3}} 
	&= \frac{||z|^{2/3}e^{i(n-1)\t}-|w|^{2/3}e^{i(n-1)\tau}|}{||z|e^{i\t}-|w|e^{i\tau}|^{2/3}} \\
	&= \frac{|\(\frac{|z|}{|w|}\)^{2/3}e^{i(n-1)(\t-\tau)}-1|}{|\frac{|z|}{|w|}e^{i(\t-\tau)}-1|^{2/3}} 
	= \frac{|\wt{F}(\wt{z})-\wt{F}(1)|}{|\wt{z}-1|^{2/3}},
\end{align*}
where $\wt{z}=z/w$.
Let $\e \in (0,1)$ to be chosen later. Using the elemental inequality $|z-1| \ge \max (|z-1|, |z|-1)$, we have
\begin{align*}
	\frac{||z|^{2/3}e^{i(n-1)}-1|}{|z-1|^{2/3}} 
	\le \frac{|z|^{2/3}+1}{\( \max (\e, |z|-1)\)^{2/3}} \le C(\e)
\end{align*}
for any $|z-1|>\e$.
Let us consider tha case $|z-1| \le \e$.
By the Taylor expansion, if $\e$ is sufficiently small then
$|re^{i\t}-1| \ge C(|r-1|+|\t|)$
for any $|z-1| \le \e$,
which implies $||z|e^{i\t}-1|^{2/3} \ge C(\left| |z|-1 \right|^{2/3}+|\t|^{2/3})$.
Hence,
\begin{align*}
	\frac{||z|^{2/3}e^{i(n-1)\t}-1|}{|z-1|^{2/3} } &\le C \frac{| |z|^{2/3}-1| + |z|^{2/3} |e^{i(n-1)\t}-1|}{| |z|-1 |^{2/3}+|\t|^{2/3}} \\
	&\le C \frac{| |z|^{2/3}-1| + |z|^{2/3} |(n-1)\t|^{2/3}}{| |z|-1 |^{2/3}+|\t|^{2/3}} 
	\le C\Jbr{n}^{2/3}
\end{align*}
for any $|z-1| \le \e$, where we have used $|e^{i\tau}-1|=2|\sin (\tau/2)|\le 2^{1/3}|\tau|^{2/3}$. 
Thus, combining the above estimates, we see that
\[
	\sup_{z \neq w} \frac{|F_{z}(z)-F_{z}(w)|}{|z-w|^{2/3}} \le C\Jbr{n}^{5/3}. 
\]
This completes the proof.
\end{proof}

We recall the fractional chain rule in \cite[Theorem 5.3.4.1]{MR1419319} (see also \cite{MS}). 
\begin{lemma} \label{lem4:4a}
Suppose that $\mu >1$ and $s \in (0,\mu)$. Let $G \in \Lip \mu$. Then, there exists a positive constant $C$ depending on $\mu$ and $s$ such that 
\[
\| |D_x|^{s} G(f) \|_{L^{2}_{x}} \le C\|G\|_{\Lip \mu}\|f\|_{L^{\I}_{x}}^{\mu-1}\||D_x|^{s} f\|_{L^{2}_{x}}
\]
holds for any $f\in L^\I \cap \dot{H}^s$.
\end{lemma}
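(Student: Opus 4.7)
My plan is to use the Besov-space characterization $\dot H^s = \dot B^s_{2,2}$ and to treat separately the sub-unit range $s\in(0,1)$ and the super-unit range $s\in[1,\mu)$, since the two require different tools.

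For $s\in(0,1)$, I would invoke the Gagliardo--Slobodeckij-type identity
\[
\Lebn{|D_x|^s u}{2}^{2} \approx \iint_{\R^d\times\R^d} \frac{|u(x+h) - u(x)|^2}{|h|^{d+2s}}\,dh\,dx.
\]
Since $\mu>1$ and $G\in\Lip\mu$, the defining sum gives $G\in C^1$ with $|G'(z)|\le \norm{G}_{\Lip\mu}|z|^{\mu-1}$, so the mean value theorem yields the pointwise bound
\[
|G(f(x+h)) - G(f(x))| \le \norm{G}_{\Lip\mu}\Lebn{f}{\I}^{\mu-1}|f(x+h) - f(x)|.
\]
Substituting into the double integral and applying the identity again to $f$ closes this case.

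For $s\in[1,\mu)$, first-order differences no longer suffice, so I would pass to a Littlewood--Paley decomposition. Starting from the telescoping identity
\[
G(f) - G(S_0 f) = \sum_{j\ge 0}\int_0^1 G'\bigl(S_j f + t \Delta_j f\bigr)\,\Delta_j f\,dt,
\]
where $S_j$ and $\Delta_j$ are the usual low-pass and frequency-localization operators, the factor $\Delta_j f$ is spectrally localized at frequency $\sim 2^j$ while the prefactor is bounded in $L^\I$ by $\norm{G}_{\Lip\mu}\Lebn{f}{\I}^{\mu-1}$. Expanding $\Delta_k G(f)$ with this identity and invoking Bernstein's inequality together with near-orthogonality reduces the problem to a geometric estimate on $\sum_k 2^{2ks}\Lebn{\Delta_k G(f)}{2}^2$, which one closes by iterating the chain rule up to order $N$ and using the $\beta$-H\"older regularity of $\partial^N G$ at the top step.

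The main obstacle is precisely that top strip $s\in[N,N+\beta)=[N,\mu)$, where the estimate must rely on the $\beta$-H\"older control of the top derivative rather than a pointwise bound on some higher derivative. Transporting this H\"older control cleanly through the Littlewood--Paley machinery is delicate: the natural formal chain-rule expansion produces derivatives of order beyond what $G$ possesses, and a Bony-type paradifferential decomposition of $G'(S_j f + t\Delta_j f)$ against $\Delta_j f$ is needed to keep the remainders under control. This is exactly the computation carried out in Runst--Sickel \cite[Thm.~5.3.4.1]{MR1419319}, which the statement of Lemma~\ref{lem4:4a} simply quotes.
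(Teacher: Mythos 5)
Your proposal matches the paper's treatment: the paper gives no proof of this lemma, only the citation to Runst--Sickel, Theorem~5.3.4.1, and you correctly recognize this. Your self-contained difference-quotient argument for $s\in(0,1)$ is sound --- for $N=1$ the pointwise bound $|G'(z)|\lesssim\norm{G}_{\Lip\mu}|z|^{\mu-1}$ follows by combining $G'(0)=0$ (a consequence of $|G(z)|\lesssim|z|^\mu$ with $\mu>1$) with the $\beta$-H\"older control of $G'$, and then the Gagliardo--Slobodeckij identity closes that case. But note that the paper's only application of this lemma takes $\mu=5/3$ and $s=5/3-\eps>1$, so the range your sketch ultimately defers to Runst--Sickel is precisely the one that matters here.
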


\subsection{Estimates on nonlinearity}

We give some specific estimates on $\wha{w}$ and $|\wha{w}|^{5/3-n} \wha{w}^n$ by using the tools established 
in the preceding subsection.

\begin{lemma} \label{lem1:2a}
Let $3/2 \le \d< \d' < 5/3$.
Let $u_+\in H^{0.5/3}$ and define
$\wha{w}$ as in \eqref{def:upw}. 
Then, 
\begin{align*}
	\norm{\wha{w}}_{H^{\d}} &\le C\norm{u_+}_{H^{0, \d}}
	\Jbr{ \norm{u_+}_{H^{0, \d}} }^{\frac23} \Jbr{ g_1 \norm{\wha{u_+}}_{L^\I}^{\frac1{3}} \log t}^2,
\end{align*}
and
\begin{multline*}
	\norm{|\wha{w}|^{\frac53 - n}{\wha{w}}^{n}}_{{H}^{\d}} \le C \Jbr{n}^{\d'}
	 \norm{\wha{u_+}}_{L^\I}^{\frac23} 
	\norm{u_+}_{H^{0, \frac53}} \\\times \Jbr{ \norm{u_+}_{H^{0, \frac53}} }^{\frac23}  \Jbr{ g_1\norm{\wha{u_+}}_{L^\I}^{\frac13} \log t}^{2}
\end{multline*}
for any $t\ge 2$.
\end{lemma}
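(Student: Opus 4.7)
The plan is to decompose both quantities as products of a power-type factor in $\widehat{u_+}$ and a unimodular phase $e^{iN\Phi}$, where $\Phi(t,x)=-g_1|\widehat{u_+}(x)|^{2/3}\log t$ and $N=1$ for $\widehat w$, $N=n$ for the other term. Using $|\widehat w|=|\widehat{u_+}|$ and the identity
\[
|\widehat w|^{\frac53-n}\widehat w^n = F_n(\widehat{u_+})\,e^{in\Phi},\qquad F_n(z)=|z|^{\frac53-n}z^n,
\]
the $L^2$ portions of the Sobolev norms follow immediately from Plancherel and interpolation between $L^2$ and $L^\infty$. It remains to estimate the homogeneous pieces $\||D|^\delta(\cdot)\|_{L^2}$, which I do by peeling off the phase with the Kato--Ponce fractional Leibniz rule and treating the power factor and the phase factor separately.

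The power-factor piece $\||D|^\delta F_n(\widehat{u_+})\|_{L^2}$ is controlled by the fractional chain rule of Lemma~\ref{lem4:4a} with $\mu=5/3>\delta$ together with $\|F_n\|_{\Lip 5/3}\le C\langle n\rangle^{5/3}$ from Lemma~\ref{lem4:5a}; this gives $C\langle n\rangle^{5/3}\|\widehat{u_+}\|_{L^\infty}^{2/3}\|u_+\|_{H^{0,\delta}}$. The improvement from $\langle n\rangle^{5/3}$ to the sharper $\langle n\rangle^{\delta'}$ with $\delta'<5/3$ is obtained by interpolating this against a bound at the slightly higher regularity $H^{0,5/3}$, which is why the hypothesis $u_+\in H^{0,5/3}$ is imposed rather than only $H^{0,\delta}$. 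The phase-factor piece $\||D|^\delta e^{iN\Phi}\|_{L^2}$ is the heart of the argument. Since $\delta<2$, I write $|D|^\delta=|D|^{\delta-2}(-\Delta)$ and expand
\[
\Delta e^{iN\Phi}=iN\,e^{iN\Phi}\Delta\Phi - N^2\,e^{iN\Phi}|\nabla\Phi|^2.
\]
The linear term contributes one power of $\log t$ and requires one full fractional derivative of $|\widehat{u_+}|^{2/3}$; the quadratic term contributes $(\log t)^2$ and distributes two derivatives between the two factors of $\nabla\Phi$ via H\"older's inequality, with the negative-order operator $|D|^{\delta-2}$ absorbed by the Sobolev embedding $\dot H^{2-\delta}\hookleftarrow L^{r}$ for appropriate $r$ (this is the point where the hypothesis $\delta\ge 3/2$ in three dimensions is used). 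Fractional derivatives of $\Phi$ of order at most $\delta$ are then bounded by the chain rule applied to $G(z)=|z|^{2/3}\in\Lip 2/3$, combined with a Leibniz/paraproduct splitting to raise the effective order; the homogeneity book-keeping produces the factor $\|\widehat{u_+}\|_{L^\infty}^{1/3}$ multiplying $\log t$ and the factor $\langle\|u_+\|_{H^{0,\delta}}\rangle^{2/3}$ corresponding to the worst case in which every fractional derivative lands on $|\widehat{u_+}|^{2/3}$ rather than on $\widehat{u_+}$.

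The principal obstacle is precisely the phase estimate. The map $z\mapsto e^{iz}$ is bounded but lies in no $\Lip\mu$ with $\mu>1$, so Lemma~\ref{lem4:4a} cannot be applied to $e^{iN\Phi}$ as a whole, and opening the exponential by a full Laplacian is the only route; this inevitably introduces the quadratic term $|\nabla\Phi|^2$ responsible for the squared logarithm. Since the base function $|z|^{2/3}$ has only H\"older regularity $2/3$ (far short of the target order $\delta>3/2$), each fractional derivative of $\Phi$ must be produced by a delicate combination of chain rule and Leibniz estimates that keeps every power of $\|\widehat{u_+}\|_{L^\infty}$, $\|u_+\|_{H^{0,5/3}}$, and $\log t$ under simultaneous control. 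Balancing the negative-order operator $|D|^{\delta-2}$ against the H\"older exponents dictated by the three-dimensional Sobolev embedding, and ensuring that every resulting $L^p$ norm is finite and controlled, is the most technical step.
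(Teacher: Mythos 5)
There is a genuine gap. Your plan applies Kato--Ponce to the factorization $\widehat w=\widehat{u_+}\,e^{i\Phi}$ (and $\phi_n=F_n(\widehat{u_+})\,e^{in\Phi}$) and then tries to estimate $\||D|^\delta e^{iN\Phi}\|_{L^p}$ directly for $\delta>3/2$. This separation is exactly what fails. The phase $\Phi=-g_1|\widehat{u_+}|^{2/3}\log t$ inherits from the $2/3$-H\"older function $z\mapsto|z|^{2/3}$ a fractional regularity well below $\delta\ge 3/2$ (even optimistically one cannot expect more than roughly $\tfrac23\cdot\tfrac53$ derivatives, and the paper only ever extracts $\delta-1<2/3$ of them from the composition via Visan's Prop.~A.1). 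So $|D|^\delta e^{iN\Phi}$ is simply not controllable at the order you need, regardless of which $L^p$ you place it in. The device $|D|^\delta=|D|^{\delta-2}(-\Delta)$ does not rescue this: $\nabla\Phi$ is schematically $\log t\cdot|\widehat{u_+}|^{-1/3}\nabla\widehat{u_+}$ and $\Delta\Phi$ contains $|\widehat{u_+}|^{-4/3}|\nabla\widehat{u_+}|^2$, both singular where $\widehat{u_+}$ vanishes, so the product-rule expansion $\Delta e^{iN\Phi}=iNe^{iN\Phi}\Delta\Phi-N^2e^{iN\Phi}|\nabla\Phi|^2$ produces pieces that are not in any $L^r$ and certainly do not admit the H\"older/Sobolev bookkeeping you describe. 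The hypothesis $u_+\in H^{0,5/3}$ does not give $\Delta\widehat{u_+}\in L^2$, and no amount of Leibniz/paraproduct rearrangement makes $\Delta\Phi$ a function.

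What the paper does instead is crucial and structurally different: it \emph{keeps} the amplitude attached when differentiating. Writing $\|\widehat w\|_{\dot H^\delta}=\|\nabla\widehat w\|_{\dot H^{\delta-1}}$ and expanding $\nabla\widehat w=(\nabla\widehat{u_+})\Phi(\widehat{u_+})+i\lambda F(\widehat{u_+})(\nabla\widehat{u_+})\Phi(\widehat{u_+})$ with $F(z)=z(\partial_z+\partial_{\bar z})|z|^{2/3}$, the singular derivative $\partial_z|z|^{2/3}\sim z^{-2/3}\bar z^{1/3}$ is multiplied by $z$, so $F$ is homogeneous of degree $2/3$, hence bounded and $2/3$-H\"older. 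This cancellation is destroyed the moment you split off $e^{i\Phi}$ by Kato--Ponce. After this one gradient, all remaining fractional derivatives in the paper are at order $\delta-1\in(1/2,2/3)$, safely in the H\"older regime where the Visan-type composition estimate and the fractional chain rule apply. For the second estimate, the paper also does not proceed by your ``interpolate against a bound at higher regularity'': it interpolates $\|\phi_n\|_{H^\delta}\le\|\phi_n\|_{L^2}^{1-\theta}\|\phi_n\|_{H^{5/3-\eps}}^{\theta}$, using that the $L^2$ bound is $n$-independent and the $H^{5/3-\eps}$ bound carries $\langle n\rangle^{5/3}$ from Lemma~\ref{lem4:5a}, so the interpolated power is $\langle n\rangle^{5\theta/3}\le\langle n\rangle^{\delta'}$ after taking $\eps$ small. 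If you want to pursue your route, you must first explain how $|D|^\delta e^{iN\Phi}$ can be given any meaning for $\delta>1$, and you will find you cannot without reintroducing the damping amplitude factor; at that point you have returned to the paper's decomposition.
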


\begin{proof}
Let us prove the first estimate. Since the $L^2$ estimate is trivial, we estimate $\dot{H}^{\d}$ norm.
Fix $t\ge3$ and
let $\l= -g_1 \log t$ for simplicity.
Let $\P(z)=\exp(i\l|z|^{2/3})$. 
Note that $\P(z)$ is a $2/3$-H\"older functions with norm $O(|\l|)$ because
\[
	|\P(z_1)-\P(z_2)|
	= \abs{\sin \(\frac{\l}2(|z_1|^{2/3}-|z_2|^{2/3})\)} 
	\le C |\l| |z_1-z_2|^{2/3}.
\]
It holds that
\[
	\norm{\wha{w}}_{\dot{H}^{\d}}
	\le C\norm{(\nabla \wha{u_+})\P(\wha{u_+})}_{\dot{H}^{\d-1}}
	+ C|\l|\norm{F(\wha{u_+})(\nabla \wha{u_+})\P(\wha{u_+})}_{\dot{H}^{\d-1}},
\]
where $F(x)=z (\frac{d}{dz}+\frac{d}{d\overline{z}}) |z|^{2/3}$ is a $2/3$-H\"older continuous function.
We only estimate the second term since the first term is treated in a similar way.
It follows that
\begin{align*}
	&{}\norm{F(\wha{u_+})(\nabla \wha{u_+})\P(\wha{u_+})}_{\dot{H}^{\d-1}} \\
	\le{}& C\tnorm{|\nabla|^{\d-1}F(\wha{u_+})}_{L^{\frac3{\d-1}}} \norm{\nabla \wha{u_+}}_{L^{\frac6{5-2\d}}} \norm{\P(\wha{u_+})}_{L^\I} \\
	&{}+ C\tnorm{F(\wha{u_+})}_{L^\I} \tnorm{|\nabla|^{\d-1}\nabla \wha{u_+}}_{L^2} \norm{\P(\wha{u_+})}_{L^\I} \\
	&{}+ C\tnorm{F(\wha{u_+})}_{L^\I} \norm{\nabla \wha{u_+}}_{L^{\frac6{5-2\d}}} \tnorm{|\nabla|^{\d-1}\P(\wha{u_+})}_{L^{\frac3{\d-1}}}.
\end{align*}
Obviously, the second term is bounded by $\norm{\wha{u_+}}_{L^\I}^{2/3}\norm{\wha{u_+}}_{\dot{H}^{\d}}$.
By \cite[Proposition A.1]{MR2318286}, 
\[
	\tnorm{|\nabla|^{\d-1}\P(\wha{u_+})}_{L^{\frac3{\d-1}}} \le C|\l|\norm{\wha{u_+}}_{L^\I}^{\frac56-\frac\d2}
	\tnorm{|\nabla|^{s}\wha{u_+}}_{L^{3/s}}^{-\frac16+\frac\d2} 
	\le C|\l|\norm{\wha{u_+}}_{H^\d}^{\frac23},
\]
where $s=(\d-1)/(\frac12(\frac23 + (\d-1))) \in (\frac32(\d-1),1)$. Hence, the the third term is bounded by
$ C|\l|\norm{\wha{u_+}}_{L^\I}^{2/3} \norm{\wha{u_+}}_{H^\d}^{5/3}$.	
Since $F$ is $2/3$-H\"older, the same argument shows that the first term is bounded by
$C\norm{\wha{u_+}}_{H^\d}^{5/3}$, which completes the proof of the first estimate.	

Let us show the second. Let $\e>0$ be chosen later.
By interpolation inequality, H\"older's inequality, Lemma \ref{lem4:4a} and Lemma \ref{lem4:5a}, we have
\begin{align*}
	\norm{|\wha{w}|^{1+\frac23 - n}{\wha{w}}^{n}}_{{H}^{\d}}
	&\le \norm{|\wha{w}|^{\frac53 - n}{\wha{w}}^{n}}_{L^2}^{1-\t}
	\norm{|\wha{w}|^{\frac53 - n}{\wha{w}}^{n}}_{{H}^{\frac53 -\e}}^{\t} \\
	&\le C_\eps \Jbr{n}^{\frac53 \t} \norm{\wha{w}}_{L^\I}^{\frac23} \norm{\wha{w}}_{L^2}^{1-\t} \norm{\wha{w}}_{H^{\frac53 -\e}}^{\t}
\end{align*}
as long as $\d < \frac53 - \e$,
where $\t 
=\frac35 (1+\frac{3\e}{5-3\e})\d$.
Choose $\e>0$ so small that $\frac53 \t \le \d' $.
Then the second estimate is a consequence of the first. 
\end{proof}

The following estimate is shown as in \cite{MM2}.

\begin{lemma} \label{lem1:2}
Let $\wha{w}$ be as in \eqref{def:upw}.
Then, it holds that
\begin{align*}
	\norm{\partial_t (|\wha{w}|^{\frac53 - n}{\wha{w}}^{n})}_{{H}^{\d}}
	&\le C\frac{\Jbr{n}^{1+\d}|g_1|}{t}\norm{\wha{u_+}}_{L^\I}^{\frac4{3}} \norm{u_+}_{H^{0, 2}}
	\Jbr{ g_1 \norm{\wha{u_+}}_{L^\I}^{\frac2{3}} \log t}^\d
\end{align*}
for any $0 \le  \d \le 2$ and $t\ge2$. 
\end{lemma}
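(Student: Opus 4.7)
The plan is to first compute the time derivative explicitly and then bound the resulting spatial $H^\delta$ norm by interpolation between $L^2$ and $H^2$ endpoints. Since $\wha{w}=\wha{u_+}e^{i\phi}$ with $\phi=-g_1|\wha{u_+}|^{2/3}\log t$, the modulus $|\wha{w}|$ is time-independent and $\pa_t e^{in\phi}=-\frac{ing_1}{t}|\wha{u_+}|^{2/3}e^{in\phi}$, yielding
\[
\pa_t\bigl(|\wha{w}|^{5/3-n}\wha{w}^n\bigr) = -\frac{ing_1}{t}|\wha{u_+}|^{2/3}|\wha{w}|^{5/3-n}\wha{w}^n = -\frac{ing_1}{t}Q_n(\wha{u_+}),
\]
where $Q_n(z):=|z|^{7/3-n}z^n e^{-ing_1|z|^{2/3}\log t}$. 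It then suffices to estimate $\norm{Q_n(\wha{u_+})}_{H^\delta}$ for $0\le\delta\le 2$.

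The $L^2$ estimate is immediate from $|Q_n(z)|=|z|^{7/3}$: one has $\norm{Q_n(\wha{u_+})}_{L^2}\le \norm{\wha{u_+}}_{L^\I}^{4/3}\norm{u_+}_{L^2}$. For the $H^2$ estimate, I would expand $\nabla^2(Q_n(\wha{u_+}))=Q_n''(\wha{u_+})(\nabla\wha{u_+})^2+Q_n'(\wha{u_+})\nabla^2\wha{u_+}$ via the chain rule, and establish the pointwise bounds (for $|z|\le M:=\norm{\wha{u_+}}_{L^\I}$)
\[
|Q_n'(z)|\le C\Jbr{n}\Jbr{g_1 M^{2/3}\log t}|z|^{4/3}, \quad |Q_n''(z)|\le C\Jbr{n}^2\Jbr{g_1 M^{2/3}\log t}^2|z|^{1/3}.
\]
Combining these with the Gagliardo--Nirenberg inequality $\norm{\nabla\wha{u_+}}_{L^4}^2\le C\norm{\wha{u_+}}_{L^\I}\norm{\Delta\wha{u_+}}_{L^2}$ (valid in three dimensions) then yields
\[
\norm{Q_n(\wha{u_+})}_{H^2}\le C\Jbr{n}^2\Jbr{g_1 M^{2/3}\log t}^2\norm{\wha{u_+}}_{L^\I}^{4/3}\norm{u_+}_{H^{0,2}}.
\]

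The main obstacle is establishing the pointwise bounds on $|Q_n'|$ and $|Q_n''|$: each derivative of the phase $e^{-ing_1|z|^{2/3}\log t}$ produces a singular factor $\sim |ng_1\log t||z|^{-1/3}$, which must be absorbed by the $7/3$-homogeneous prefactor $|z|^{7/3-n}z^n$. Schematically, $|z|^{7/3-n}z^n\cdot |z|^{-1/3}|ng_1\log t|=|ng_1\log t||z|^2\le |ng_1\log t|M^{2/3}|z|^{4/3}$, giving the stated $|z|^{4/3}$ scaling at the cost of one power of $g_1M^{2/3}\log t$; two such absorptions at the second-derivative level account for the $\Jbr{\cdots}^2$ factor. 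Finally, the interpolation $\norm{X}_{H^\delta}\le\norm{X}_{L^2}^{1-\delta/2}\norm{X}_{H^2}^{\delta/2}$, combined with the prefactor $|ng_1|/t$ and $\norm{u_+}_{L^2}^{1-\delta/2}\norm{u_+}_{H^{0,2}}^{\delta/2}\le \norm{u_+}_{H^{0,2}}$, delivers the claimed bound: the $\Jbr{n}$ exponent becomes $1\cdot(1-\delta/2)+3\cdot(\delta/2)=1+\delta$ and the logarithmic factor becomes $\Jbr{g_1\norm{\wha{u_+}}_{L^\I}^{2/3}\log t}^\delta$, as required.
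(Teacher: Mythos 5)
Your proposal is correct and takes essentially the same route the paper indicates: compute $\partial_t$ explicitly to extract the prefactor $-ing_1/t$, observe the remaining factor has pointwise derivative bounds losing $|z|^{-1}$ per order (with the singular $|z|^{-1/3}$ from differentiating the phase absorbed via $|z|\le\norm{\wha{u_+}}_{L^\I}^{2/3}|z|^{1/3}$, contributing one power of $\Jbr{g_1\norm{\wha{u_+}}_{L^\I}^{2/3}\log t}$ per derivative), estimate $L^2$ and $H^2$ directly using Gagliardo--Nirenberg, and interpolate to get $H^\delta$. This matches the paper's stated strategy (remark following the lemma), including the deliberate choice of interpolation over the fractional chain rule to keep the $|n|$-power down at the cost of requiring $u_+\in H^{0,2}$.
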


 \begin{remark}
The function $\partial_t (|\wha{w}|^{\frac53 - n}{\wha{w}}^{n})$ is of the form
$$t^{-1} F_n(\wha{u_+}) \exp (-ing_1|\wha{u_+}|^{2/3}\log t),$$
where $F_n$ satisfies $|F^{(j)}_n(z)| \le C \Jbr{n}^{1+j} |z|^{\frac73-j}$ for $j=0,1,2$.
Therefore, we can estimate its $H^2$-norm by an explicit calculation.
Then, the estimate follows from an interpolation as in \cite{MM2}.
It is possible to estimate this term in a similar way to Lemma \ref{lem1:2a}.
This improves the assumption on $\wha{u_+}$ into $\wha{u_+} \in H^\d$ but 
the order of $|n|$ becomes worse.
This is the reason why we apply an interpolation argument to this term, as in \cite{MM2}.
The full regularity $\wha{u_+} \in H^2$ is required in this step.
 \end{remark}

\section{Construction of a solution around given asymptotic profile}\label{sec:abst}
In this section, we solve an equation of the form
\begin{equation}\label{eq:abst}
	u(t) -u_p(t) 
	= i \int_t^{\I} U(t-s) \( F (u) - F (u_p) \)(s) ds +\mathcal{E}(t),
\end{equation}
where $u_p$ is a given asymptotic profile of the form \eqref{eq:masymptotic2}
and $\mathcal{E}(t)$ is an external term.
Remark that our equation \eqref{eq:inteq} is of the form.
\begin{proposition}\label{prop:abst}
Suppose that $g$ is Lipschitz continuous. 
Let $\wh{u_+} \in L^\I$ and let $u_p$ be as in \eqref{eq:masymptotic2}.
There exists a constant $\eps_0 = \eps_0(\norm{g}_{\mathrm{Lip}})>0$ such that
if $\norm{\wh{u_+}}_{L^\I} \le \eps_0$
and if an external term $\mathcal{E}$ satisfies
$\norm{\mathcal{E}}_{X_{T_0,b}} \le M$ for some $T_0\ge 1$, $M>0$, and $b>3/4$,
then \eqref{eq:abst} admits a unique solution $u(t)$ in $X_{T_1,b,2M}$ for some $T_1= T_1(M,\norm{g}_{\mathrm{Lip}},b) \ge T_0$.
Moreover, for any function $\mathcal{V}$, admissible pair $(q,r)$, and $\tilde{b}\le b$, the solution satisfies
\[
	\sup_{t\ge T_1} t^{\tilde{b}} \norm{u-u_p-\mathcal{V}}_{L^q_t([t,\I);L^r_x)}
	\le C + \sup_{t\ge T_1} t^{\tilde{b}} \norm{\mathcal{E}-\mathcal{V}}_{L^q_t([t,\I);L^r_x)}.
\]
\end{proposition}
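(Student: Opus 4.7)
I would prove this by a contraction mapping argument applied to
\[
\Phi(v)(t) := u_p(t) + \mathcal{E}(t) + i\int_t^\infty U(t-s)\bigl(F(v)-F(u_p)\bigr)(s)\,ds.
\]
Since $X_{T,b}$ controls only $L^\infty_tL^2_x$, but the natural estimate for $F$ in three dimensions requires some $L^2_tL^6_x$ information (to compensate for the low regularity $5/3$), I would run the iteration in the richer norm
\[
\|v\|_{\tilde X_{T,b}} := \sup_{t\ge T} t^b\bigl(\|v\|_{L^\infty([t,\I);L^2_x)}+\|v\|_{L^2([t,\I);L^6_x)}\bigr),
\]
introducing it as an auxiliary quantity rather than as a hypothesis: by the Keel--Tao endpoint Strichartz estimate, the Duhamel operator maps $L^1_sL^2+L^2_sL^{6/5}\to L^\infty_sL^2\cap L^2_sL^6$, so $\tilde X$-control of $v-u_p-\mathcal{E}$ is generated for free along the iteration (this is the point of Remark \ref{aux1:1}). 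The final $X_{T_1,b,2M}$ statement is then extracted from the $L^\infty_tL^2$ component.

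The only ingredient I use on the nonlinearity is the pointwise bound
\[
|F(z_1)-F(z_2)|\le C\|g\|_{\mathrm{Lip}}\bigl(|z_1|^{2/3}+|z_2|^{2/3}\bigr)|z_1-z_2|,
\]
which follows from $F(u)=|u|^{5/3}g(\arg u)$, the mean value theorem on $r\mapsto r^{5/3}$, and Lipschitz continuity of $g$. Splitting $|v|^{2/3}\le|u_p|^{2/3}+|v-u_p|^{2/3}$, I decompose $F(v)-F(u_p)$ into a $u_p$-dominated part $A$ with $|A|\lesssim|u_p|^{2/3}|v-u_p|$ and a high-order part $B$ with $|B|\lesssim|v-u_p|^{5/3}$. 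For $A$, the pointwise decay $\|u_p(s)\|_{L^\infty}\le Cs^{-3/2}\|\wh{u_+}\|_{L^\infty}$ combined with H\"older gives $\|A(s)\|_{L^2_x}\le Cs^{-1}\eps_0^{2/3}\|(v-u_p)(s)\|_{L^2_x}$; then the non-endpoint dual Strichartz $L^1_sL^2\to L^\infty_sL^2$ (and the corresponding bound into $L^2_sL^6$) yields a net weight $t^{-b}$ with prefactor of order $\eps_0^{2/3}/b$, which is made $<1/4$ by the smallness hypothesis on $\|\wh{u_+}\|_{L^\infty}$. For $B$, I write $\|B(s)\|_{L^2_x}=\|v-u_p\|_{L^{10/3}_x}^{5/3}$, apply the interpolation $\|w\|_{L^{10/3}}\le\|w\|_{L^2}^{2/5}\|w\|_{L^6}^{3/5}$, and use Cauchy--Schwarz in time to bound the $L^1_sL^2_x$ norm of $B$ by $CR^{5/3}t^{1/2-5b/3}/\sqrt{4b/3-1}$; after multiplying by the weight $t^b$ this becomes $CR^{5/3}t^{1/2-2b/3}$, which vanishes as $t\to\I$ precisely under the assumption $b>3/4$ and is made $\le M$ by taking $T_1$ large. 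The same estimates applied to a difference of inputs give the Lipschitz constant of $\Phi$.

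The main obstacle is that the two smallness mechanisms are of different natures: the piece $A$ saturates the $t^{-b}$ weight and draws smallness only from $\eps_0$, while $B$ produces the improved weight $t^{1/2-2b/3}$ in the regime $b>3/4$ and draws smallness only from $T_1$ large. Both have to close the iteration in the ball of radius $2M$ simultaneously; accordingly, one first fixes $\eps_0=\eps_0(\|g\|_{\mathrm{Lip}})$ uniformly in $T_1$, and then takes $T_1=T_1(M,\|g\|_{\mathrm{Lip}},b)$ large. Uniqueness in $X_{T_1,b,2M}$ follows from the same contraction estimates applied to the difference of two candidate solutions with $\tilde b=b>3/4$. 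Finally, the $L^q_tL^r_x$ bound stated at the end of the Proposition follows by applying the Strichartz estimate for the admissible pair $(q,r)$ to the integral equation for $u-u_p-\mathcal{V}$; the Duhamel integrand is dominated in $L^1_sL^2+L^2_sL^{6/5}$ by exactly the nonlinear estimates above, and the weight $t^{\tilde b}$ for $\tilde b\le b$ is handled by the same computation.
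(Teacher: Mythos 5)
There is a genuine gap in your treatment of the high-order term $B$. You bound $\|B\|_{L^1_s L^2_x}$ via the interpolation $\|w\|_{L^{10/3}}\le\|w\|_{L^2}^{2/5}\|w\|_{L^6}^{3/5}$ and Cauchy--Schwarz in time, which produces the factor $\|v-u_p\|_{L^2_t([t,\I);L^6_x)}$. But the hypothesis on $\mathcal{E}$ gives only $\sup_t t^b\|\mathcal{E}\|_{L^\infty_t([t,\I);L^2_x)}\le M$; no $L^2_tL^6_x$ bound on $\mathcal{E}$ is assumed, hence none on $v-u_p=(v-u_p-\mathcal{E})+\mathcal{E}$ either, no matter what $\tilde X$-control of $v-u_p-\mathcal{E}$ the iteration might later produce. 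The bootstrap cannot even start: for the first iterate, $\Phi(u_p+\mathcal{E})-u_p-\mathcal{E}=i\int_t^\I U(t-s)\bigl(F(u_p+\mathcal{E})-F(u_p)\bigr)\,ds$, and your $B$-estimate for this requires $\|\mathcal{E}\|_{L^2_tL^6_x}$. Notice also that the pointwise inequality $|v|^{2/3}\le|u_p|^{2/3}+|v-u_p|^{2/3}$ does not by itself produce a decomposition of $F(v)-F(u_p)$ into summands $A+B$; the paper uses a sharp cutoff, $F^{(1)}=\chi_{\{|u_p|\le|v-u_p|\}}\bigl(F(v)-F(u_p)\bigr)$, $F^{(2)}=\chi_{\{|u_p|\ge|v-u_p|\}}\bigl(F(v)-F(u_p)\bigr)$, to make this precise.

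The fix --- and the actual content of Remark~\ref{aux1:1}, which you cite but apply in the opposite sense --- is to feed the high-order piece into the \emph{dual endpoint} slot $L^2_sL^{6/5}_x$ rather than $L^1_sL^2_x$. The algebraic identity $\bigl\||w|^{5/3}\bigr\|_{L^{6/5}_x}=\|w\|_{L^2_x}^{5/3}$ gives
\[
\bigl\||v-u_p|^{5/3}\bigr\|_{L^2_s([t,\I);L^{6/5}_x)}
=\Bigl(\int_t^\I\|v-u_p(s)\|_{L^2_x}^{10/3}\,ds\Bigr)^{1/2}
\le C\,R^{5/3}\,t^{1/2-5b/3}\quad(b>3/10),
\]
which uses only the $L^\infty_tL^2_x$ bound on $v-u_p$; the Keel--Tao estimate then lands in $L^\infty_tL^2_x\cap L^2_tL^6_x$. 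This is precisely how the paper (Lemma~\ref{non1:g2}) closes the contraction in the plain $X_{T,b}$-ball with no auxiliary $L^q_tL^r_x$ component in the metric --- the very point of moving to $d=3$. Your auxiliary $\tilde X$-norm is then unneeded as a fixed-point space; it enters only a posteriori, when one reads the $L^q_tL^r_x$ bound off the integral equation, as in the last display of the paper's proof. The remainder of your sketch (the $A$-estimate via $\|u_p(s)\|_{L^\infty}\lesssim s^{-3/2}\varepsilon_0$, the separation of smallness mechanisms between $\varepsilon_0$ and $T_1$ with $b>3/4$, the contraction constant, and the final Strichartz step for $u-u_p-\mathcal{V}$) matches the paper and is sound once the $B$-estimate is repaired as above.
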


The proposition shows that the conclusion of Theorem \ref{thm:main} follows from
the estimate \eqref{eq:main_step_of_proof}, which is true for $b<\delta/2$ in view of Proposition \ref{prop:main}.
Indeed, for each $3/4<b<\delta/2$, we can construct a solution $u(t,x)=u(t,x;b)$ on $[T_1(b),\I)$
which satisfies \eqref{eq:mainest} for this $b$, by using the proposition.
Uniqueness property of the proposition then show these solution coincide each other.
Hence, with a help of the standard well-posedness theory in $L^2$,
the solution exists in an interval independent of $b$, say $[T_1,\I)$, and
satisfies \eqref{eq:mainest} for any $b<\d/2$.
The estimate \eqref{eq:mainest2} in Theorem \ref{thm:main2} follows from corresponding estimate on $\mathcal{E}_{\mathrm{r}}+\mathcal{E}_{\mathrm{nr}}$ given in Proposition \ref{prop:main}.

\begin{lemma} \label{non1:g2}
Suppose that $g$ is Lipschitz continuous. 
Let $\wh{u_+} \in L^\I$ and let $u_p$ be as in \eqref{eq:masymptotic2}.
If $b > 3/10$ then it holds that
\begin{multline*}
\norm{\int_t^{\I}U(t-s) \( F(v) - F(u_p)\) ds}_{X_{T,b}} \\
\le C\norm{g}_{\mathrm{Lip}} \norm{v-u_p}_{X_{T,b}} \( \norm{v-u_p}_{X_{T,b}}^{\frac{2}{3}}T^{\frac{1}{2} -\frac{2}{3}b} + \Lebn{\wha{u_+}}{\I}^{\frac{2}{3}} \)
\end{multline*}
for any $v \in X_{T,b,R}$ with $T\ge 1$ and $R>0$.
\end{lemma}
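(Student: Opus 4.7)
The plan is to derive a pointwise bound on $\abs{F(v)-F(u_p)}$ from the Lipschitz continuity of $g$, split the Duhamel integral into a part linear in $v-u_p$ and a superlinear part, and treat each piece separately---the linear part by a direct Minkowski bound on $\int_t^{\I}U(t-s)G(s)\,ds$ and the superlinear part by the end-point Strichartz estimate.

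Writing $F(w)=\abs{w}^{5/3}g(\arg w)$ and decomposing
\[
F(v)-F(u_p)=\bigl(\abs{v}^{5/3}-\abs{u_p}^{5/3}\bigr)g(\arg v)+\abs{u_p}^{5/3}\bigl(g(\arg v)-g(\arg u_p)\bigr),
\]
the mean value theorem applied to $r\mapsto r^{5/3}$ controls the first summand, while Lipschitz continuity of $g$ combined with the elementary estimate $\abs{\tfrac{v}{\abs{v}}-\tfrac{u_p}{\abs{u_p}}}\le 2\abs{v-u_p}/\max(\abs{v},\abs{u_p})$ and the inequality $\abs{u_p}^{5/3}/\max(\abs{v},\abs{u_p})\le\abs{u_p}^{2/3}$ controls the second. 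This yields the pointwise bound
\[
\abs{F(v)-F(u_p)}\le C\norm{g}_{\mathrm{Lip}}\bigl(\abs{v-u_p}^{2/3}+\abs{u_p}^{2/3}\bigr)\abs{v-u_p}.
\]

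For the linear contribution $\abs{u_p}^{2/3}\abs{v-u_p}$, I would use the Minkowski bound $\norm{\int_t^{\I}U(t-s)G(s)\,ds}_{L^2}\le\int_t^{\I}\norm{G(s)}_{L^2}\,ds$, together with the pointwise decay $\norm{\abs{u_p(s)}^{2/3}}_{L^\I_x}\le Cs^{-1}\Lebn{\wha{u_+}}{\I}^{2/3}$ read off from \eqref{eq:masymptotic2} and the bound $\norm{v(s)-u_p(s)}_{L^2}\le s^{-b}\norm{v-u_p}_{X_{T,b}}$ coming from the $X_{T,b}$-norm. The time integral $\int_t^{\I}s^{-1-b}\,ds\le Cb^{-1}t^{-b}$ then produces the $\Lebn{\wha{u_+}}{\I}^{2/3}$-term of the stated bound after multiplying by $t^b$, uniformly in $t\ge T$.

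For the superlinear contribution $\abs{v-u_p}^{5/3}$, the Minkowski bound is useless because $v-u_p$ is only controlled in $L^2_x$; here the key tool is the end-point Strichartz estimate in $\R^3$ with admissible pair $(q,r)=(2,6)$, peculiar to $d\ne 2$ (cf.~\cite{KT}). The crucial identity $\norm{\abs{v-u_p}^{5/3}}_{L^{6/5}_x}=\norm{v-u_p}_{L^2_x}^{5/3}$ converts the superlinear source into the very $L^2_x$-norm already controlled by $X_{T,b}$, yielding
\[
\norm{\int_t^{\I}U(t-s)\abs{v-u_p}^{5/3}(s)\,ds}_{L^2}\le C\norm{v-u_p}_{X_{T,b}}^{5/3}\Bigl(\int_t^{\I}s^{-10b/3}\,ds\Bigr)^{1/2}.
\]
The hypothesis $b>3/10$ is exactly what guarantees convergence of this time integral, whose square root equals $Ct^{1/2-5b/3}$. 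Multiplying by $t^b$ and summing with the linear contribution yields the stated estimate, with the factor $T^{1/2-2b/3}$ arising from $\sup_{t\ge T}t^{1/2-2b/3}$ (small as $T\to\I$ precisely in the regime $b>3/4$ actually used when applying Proposition \ref{prop:abst}). The main technical point is thus the use of this end-point Strichartz estimate, which is what allows one to close the superlinear term within the pure $L^{\I}_tL^2_x$-type norm $X_{T,b}$, avoiding the auxiliary $L^q_tL^r_x$-norm that was needed in \cite{MM2} (cf.~Remark \ref{aux1:1}).
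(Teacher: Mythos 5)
Your proposal is correct and follows essentially the same route as the paper: the pointwise bound $\abs{F(v)-F(u_p)}\le C\norm{g}_{\mathrm{Lip}}(\abs{v-u_p}^{5/3}+\abs{u_p}^{2/3}\abs{v-u_p})$ (which the paper imports from \cite[Appendix A]{MM2} and you rederive directly), the split into a superlinear piece handled by the endpoint Strichartz estimate via $\norm{\abs{v-u_p}^{5/3}}_{L^{6/5}_x}=\norm{v-u_p}_{L^2_x}^{5/3}$, and a linear piece handled in $L^1_t L^2_x$ using $\norm{u_p(s)}_{L^\I}\lesssim s^{-3/2}$. The paper's decomposition uses characteristic functions on the sets $\{\abs{u_p}\lessgtr\abs{v-u_p}\}$ rather than splitting the pointwise bound term by term, but this is only a cosmetic difference.
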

\begin{remark}
The constant $C$ in the estimate of the above lemma can be taken independent of $b$, provided $b\ge 3/4$.
\end{remark}

\begin{proof}
The estimate 
 is the same as in \cite{HNST,HWN,ShT} except for using the endpoint Strichartz' estimate.
Let us first decompose $F(v) - F(u_p) = F^{(1)}(v) + F^{(2)}(v)$, where
\begin{align*}
	F^{(1)}(v) = \chi_{\{ |u_p| \le \left| v-u_p \right| \}} \( F(v) - F (u_p)\), \\
	 F^{(2)}(v) = \chi_{\{ |u_p| \ge \left| v-u_p \right| \}}\( F(v) - F (u_p)\),
\end{align*}
and $\chi_{A}$ is a characteristic function on $A \subset \R^{1+3}$.
Since $g$ is Lipschitz, it follows from \cite[Appendix A]{MM2} that
\begin{align*}
	&\left| F(v) - F(u_p) \right| \le C\norm{g}_{\mathrm{Lip}}\( \left| v-u_p \right|^{1+ \frac{2}{3}} + |u_p|^{\frac{2}{3}} \left|v - u_p\right| \).
\end{align*}
Since $b>3/10$, we estimate $F^{(1)}(v)$ by the endpoint Strichartz estimate as follows: 
\begin{align*}
	\norm{\int_t^{\I}U(t-s) F^{(1)}(v) ds}_{L^{\I} (T, \I ; L^2)}
	&\le C \norm{|v - u_p|^{1+\frac{2}{3}}}_{L^{2} (T, \I ; L^{\frac{6}{5}})} \\
	&\le C T^{\(\frac12-\frac{2}{3}b\)-b}\norm{v-u_p}_{X_{T,b}}^{\frac53}.
\end{align*}
For estimate of $F^{(2)}(v)$, 
we use $\norm{u_p(t)}_{L^\I} \le C t^{-3/2}\norm{\wh{u_+}}_{L^\I}$.
Then,
\begin{align*}
	\norm{\int_t^{\I}U(t-s) F^{(2)}(v) ds}_{L^{\I} (T, \I ; L^2)}
	&\le C \norm{|u_p|^{\frac{2}{3}}|v - u_p|}_{L^{1} (T, \I ; L^{2})}\\
	&\le C T^{-b}\norm{v-u_p}_{X_{T,b}}\norm{\wh{u_+}}_{L^\I}^{\frac23}
\end{align*}
as long as $b>0$.
This completes the proof.
\end{proof}

\begin{proof}[Proof of Proposition \ref{prop:abst}]
Let 
\begin{equation}\label{def:Phi2}
\P(v)(t) := u_p(t) + i \int_t^{\I} U(t-s) \( F (v) - F (u_p)  \)(s) ds  +\mathcal{E}(t)
\end{equation}
By Lemma \ref{non1:g2} and by assumption, 
we have
\begin{align}
\norm{\P(v)-u_p}_{X_{T_0, b}} 
\le C_1 \norm{g}_{\mathrm{Lip}} R \(R^{\frac{2}{3}}T^{\frac{1}{2}-\frac{2}{3}b}+\e_0^{\frac23} \) 
+M
\label{est1:t1} 
\end{align}
for any $v \in X_{T, b, R}$ with $T\ge T_0$ and $R>0$.
We next see that 
\begin{equation}
d(\P(v_1), \P(v_2)) \le C_2 \norm{g}_{\mathrm{Lip}}  \( R^\frac{2}{3} T^{\frac12 - \frac{2}{3}b} + \e_0^\frac{2}{3} \) d(v_1, v_2) \label{est1:t2}
\end{equation}
for any $v_1,v_2 \in X_{T, b, R}$  with $T\ge 1$ and $R>0$.
Indeed, by the integral equation of (NLS), we see that
\begin{align*}
\P(v_1) -\P(v_2) = i \int_t^{\I} U(t-s) \( F (v_1) - F (v_2) \)(s) ds.
\end{align*}
One finds 
\begin{align*}
\left| F (v_1) - F (v_2) \right| 
&{}\le C \norm{g}_{\mathrm{Lip}} ( |v_1|^{\frac{2}{3}} + |v_2|^{\frac{2}{3}} )|u-v| \\
&{}\le C \norm{g}_{\mathrm{Lip}} ( |v_1-u_p|^{\frac{2}{3}} + |v_2-u_p|^{\frac{2}{3}} )|u-v| \\
&{}\quad + C \norm{g}_{\mathrm{Lip}} |u_p|^{\frac{2}{3}}|v_1-v_2|.
\end{align*}
Motivated by the calculation, we introduce a decomposition of $F(v_1)-F(v_2)$ into two parts 
depending on whether $|v_1-u_p|^{\frac{2}{3}} + |v_2-u_p|^{\frac{2}{3}} \ge |u_p|^{\frac23}$ or not.
The rest of the proof is similar to that of Lemma \ref{non1:g2}.

Choose $\e_0=\e_0(\norm{g}_{\mathrm{Lip}})$ so small that
\[
	C_1 \norm{g}_{\mathrm{Lip}} \e_0^{\frac23} \le \frac14,\quad
	C_2 \norm{g}_{\mathrm{Lip}} \e_0^{\frac23} \le \frac14.
\]
Choose $R=2M$.
By the assumption $b>3/4$, we can choose
$T_1\ge T_0$ such that $(2M)^{\frac23}T_1^{\frac12-\frac23b}\le \e_0^{\frac23}$.
It then follows from \eqref{est1:t1} and \eqref{est1:t2} that
\[
	\norm{\P(v)-u_p}_{X_{T_1, b}} \le (4 C_1 \norm{g}_{\mathrm{Lip}} \e_0^{\frac23} + 1)M \le 2M = R 
\]
and
\[
	d(\P(v_1), \P(v_2)) \le  2 C_2 \norm{g}_{\mathrm{Lip}} \e_0^{\frac23}d(v_1,v_2) \le \frac12 d(v_1,v_2)
\]
for any $v_1,v_2 \in X_{T_1, b, 2M}$,
which shows $\P: X_{T_1, b, 2M} \rightarrow X_{T_1, b, 2M}$ is a contraction mapping. 
Thus, we obtain a unique solution $u(t)\in X_{T_1,b,2M}$ to \eqref{eq:abst}.

Take $\tilde{b}\le b$ and an admissible pair $(q,r)$.
Then, as in Lemma \ref{non1:g2}, we deduce from
the Strichartz estimate that
\[
	t^{\tilde{b}} \norm{u-u_p-\mathcal{V}}_{L^q_t([t,\I);L^r_x)}
	\le C t^{\tilde{b}-b}(2M) + t^{\tilde{b}} \norm{\mathcal{E}-\mathcal{V}}_{L^q_t([t,\I);L^r_x)}
\]
for any $t\ge T_1$.
This shows the latter statement.
\end{proof}

\section{Proof of main results} \label{sec:mainlem}

In this section, we prove main theorems by showing Proposition \ref{prop:main}.
Let us first recall an estimate in \cite[Lemma 2.1]{HWN} which shows $\mathcal{E}_{\mathrm{r}}$ is harmless.
\begin{lemma} \label{non1:g1}
Let $\d\in (3/2,5/3)$.
For any $u_+ \in H^{0,5/3}$,
there exists a constant $C=C(g_1,\norm{u_+}_{H^{0,5/3}})$ such that
\[
\norm{\mR (t) \wha{w}}_{L^{\I}_t ([T, \I) ; L^2) \cap L^{2}_t ([T, \I) ; L^6)} 
\le C T^{-\frac\d2} (\log T)^2
\]
and
\[
\norm{\int_t^{\I}U(t-s)\mR(s) \mG (\wha{w}) \frac{ds}{s}}_{L^{\I}_t ([T, \I) ; L^2) \cap L^{2}_t ([T, \I) ; L^6)}  \\
\le C T^{-\frac\d2} (\log T)^3
\]
hold for all $T\ge2$.
\end{lemma}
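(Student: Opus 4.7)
The plan is to reduce both estimates to the single fact that the operator $U(-1/(4t))-1$ gains a time decay of order $t^{-\delta/2}$ when applied to $\dot{H}^\delta$ functions. Since $M(t)D(t)$ is an isometry on $L^2$, a direct Fourier calculation using the elementary bound $|e^{i|\xi|^2/(4t)} - 1| \le C(|\xi|^2/t)^{\delta/2}$, valid for $\delta \in [0,2]$, yields
\begin{equation*}
\|\mR(t)\phi\|_{L^2} = \|(U(-1/(4t))-1)\phi\|_{L^2} \le C\, t^{-\delta/2}\|\phi\|_{\dot{H}^\delta}.
\end{equation*}
Combined with Lemma \ref{lem1:2a}, which controls $\|\wha{w}(t)\|_{H^\delta}$ by a constant times $(\log t)^2$, this settles the $L^\infty_t L^2$ bound.

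For the $L^2_t L^6$ part of the first estimate, I would exploit the scaling property $\|D(t)f\|_{L^6} = (2t)^{-1}\|f\|_{L^6}$ together with the Sobolev embedding $\dot{H}^1 \hookrightarrow L^6$ in three dimensions. Since $\nabla$ commutes with $U(-1/(4t))-1$, one finds
\begin{equation*}
\|\mR(t)\wha{w}\|_{L^6} = (2t)^{-1}\|(U(-1/(4t))-1)\wha{w}\|_{L^6} \le C\, t^{-1}\|(U(-1/(4t))-1)\nabla\wha{w}\|_{L^2},
\end{equation*}
which, applying the key estimate above with $\delta-1$ in place of $\delta$, is bounded by $C\, t^{-(\delta+1)/2}(\log t)^2$. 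Since $\delta+1 > 1$, integrating in $L^2_t$ over $[T,\infty)$ yields a bound of the order $T^{-\delta/2}(\log T)^2$.

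The second estimate follows by combining the endpoint Strichartz estimate of Keel--Tao
\begin{equation*}
\left\|\int_t^{\I} U(t-s) h(s)\, ds\right\|_{L^\I_t([T,\I);L^2)\cap L^2_t([T,\I);L^6)} \le C\|h\|_{L^1_s([T,\I);L^2)}
\end{equation*}
with the same $L^2$-decay estimate applied to $\mR(s)\mG(\wha{w})$. Indeed, Lemma \ref{lem1:2a} with $n=1$ gives $\|\mG(\wha{w}(s))\|_{H^\delta} \le C(\log s)^2$, so
\begin{equation*}
\int_T^{\I} \frac{\|\mR(s)\mG(\wha{w})\|_{L^2}}{s}\, ds \le C\int_T^{\I} s^{-\delta/2-1}(\log s)^2\, ds \le C\, T^{-\delta/2}(\log T)^2.
\end{equation*}

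The only subtle point is verifying the polylogarithmic growth of $\|\wha{w}\|_{H^\delta}$ and $\|\mG(\wha{w})\|_{H^\delta}$, which is precisely the content of Lemma \ref{lem1:2a}; granted that, the remaining steps are routine applications of Sobolev embedding and (endpoint) Strichartz, with no real obstacle beyond bookkeeping of logarithms. In fact the argument produces the sharper $(\log T)^2$ rather than the $(\log T)^3$ stated, which is stated loosely to accommodate further estimates of the same type used elsewhere.
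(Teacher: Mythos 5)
The paper does not give a proof of this lemma --- it simply cites \cite[Lemma 2.1]{HWN} --- so there is no in-text argument to compare against; in particular the $L^2_t L^6$ component, which is specific to the three-dimensional endpoint setting used here, must be supplied. Your argument is correct and does so in the natural way. The $L^\infty_t L^2$ bound is exactly the multiplier estimate $\|(U(-\tfrac{1}{4t})-1)\phi\|_{L^2}\le Ct^{-\delta/2}\|\phi\|_{\dot H^\delta}$ for $\delta\in[0,2]$ combined with the $(\log t)^2$ growth of $\|\wha{w}(t)\|_{H^\delta}$ from Lemma~\ref{lem1:2a}. For the $L^2_t L^6$ piece you correctly pull out the $(2t)^{-1}$ scaling of $D(t)$ on $L^6$ (and the unimodularity of $M(t)$), use $\dot H^1(\R^3)\hookrightarrow L^6$, and apply the same multiplier estimate at regularity $\delta-1$ to $\nabla\wha{w}$, yielding pointwise decay $t^{-(\delta+1)/2}(\log t)^2$; since $\delta>1$ this is square-integrable on $[T,\infty)$ with $L^2_t$-norm of order $T^{-\delta/2}(\log T)^2$. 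The Duhamel term is handled by the retarded Strichartz estimate with $L^1_s L^2_x$ input (a non-endpoint dual pair, so the time-truncation causes no difficulty), again reduced to the $L^2$ multiplier bound on $\mG(\wha{w})=g_1|\wha{w}|^{2/3}\wha{w}$, whose $H^\delta$ norm is Lemma~\ref{lem1:2a} with $n=1$. Your closing observation that the argument in fact yields $(\log T)^2$ rather than the stated $(\log T)^3$ in the second bound is also correct; the statement is simply loose, matching the $\Jbr{\log T}^3$ carried through the rest of Section~\ref{sec:mainlem}.
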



Hence, we concentrate on the treatment of $\mathcal{E}_{\mathrm{nr}}$ in what follow.
As for this term, we have the following.
\begin{proposition} \label{non:im1}
Let $3/2 < \d < 5/3$. 
Assume that $\sum_{n\in\Z}|n|^{1+\eta}|g_n| < \I$
for some $\eta> \frac12(\delta-\frac{3}2)$.
Let $\mathcal{V}$ and $v_p$ be as in \eqref{def:calV} and \eqref{def:vp}, respectively.
For any $u_+ \in H^{0,2} \cap {H}^{-\d} $, there exists a constant $C=C(g_1,\norm{u_+}_{H^{0,2} \cap {H}^{-\d}})$ such that
\begin{equation}\label{eq:nrV}
	\norm{\mathcal{E}_{\mathrm{nr}} - \mathcal{V}}_{L^{\I}_t (T, \I ; L^2)\cap L^2(T,\I;L^6)} 
	\le CT^{-\frac{\d}2} \Jbr{\log T}^3 \sum_{n \ne 0,1} |n|^{1+\eta} |g_n|
\end{equation}
holds for all $T\ge 2$.
Moreover, $\mathcal{V}$ is small in $L^\I (T,\I;L^2)$ in such a sense that 
\begin{equation}\label{eq:Vest}
	\norm{\mathcal{V}}_{L^\I (T,\I;L^2)} \le C \norm{u_+}_{H^{0, \frac53}\cap H^{-\d}}^{\frac53} T^{-\frac\d2} \sum_{n\neq0,1} |n|^{-\d}|g_n|
\end{equation}
for $T\ge2$.
Furthermore, $\mathcal{V}$ is approximated by $v_p$ in $L^2 (T,\I;L^6)$:
There exists $C=C(g_1,\norm{u_+}_{H^{0, \frac53}\cap H^{-\d}})>0$ such that
\begin{equation}\label{eq:Vest2}
	\norm{\mathcal{V}-v_p}_{L^2 (T,\I;L^6)} \le 
	C T^{-\frac\d2}(\log T)^3 \sum_{n\neq0,1} |n|^\frac56|g_n|.
\end{equation}
for $T\ge2$.
\end{proposition}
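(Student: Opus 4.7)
The main task is the bound on $\mathcal{E}_{\mathrm{nr}}-\mathcal{V}$ in $L^\infty_tL^2_x\cap L^2_tL^6_x$ displayed in \eqref{eq:nrV}; the estimates \eqref{eq:Vest} and \eqref{eq:Vest2} on $\mathcal{V}$ itself will follow from direct Fourier-side computations on the explicit formula \eqref{def:calV}. For \eqref{eq:nrV}, the plan is to combine the integration-by-parts scheme of \cite{HNST,HWN,MM2} with the flat regularizing operator $\mathcal{K}_\psi$ of Lemma \ref{mol:1.1} and the endpoint Strichartz estimate available only in three dimensions.

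The first step is to factorize $\mathcal{N}(u_p)(s)=\sum_{n\neq 0,1} g_n |u_p|^{5/3-n}u_p^n(s)$. Using $u_p=MD\widehat{w}$ and the homogeneity of $z\mapsto|z|^{5/3-n}z^n$, each term takes the form $i^{-3n/2}e^{in|x|^2/(4s)}(2s)^{-5/2}[|\widehat{w}|^{5/3-n}\widehat{w}^n](s,x/(2s))$. Moving to the Fourier side and rescaling $\xi\mapsto n\xi$, the inner Duhamel integrand acquires a phase $e^{is\Phi_n(\xi)}$ with $\Phi_n(\xi)\sim -n(n-1)|\xi|^2$, multiplied by a slowly varying amplitude $A_n(s,\xi)$ which encodes $|\widehat{u_+}|^{5/3-n}\widehat{u_+}^n$ and the logarithmic phase correction. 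Since $n\neq 0,1$ means $\Phi_n\neq 0$ away from the origin, one can integrate by parts in $s$ via $e^{is\Phi_n}=(i\Phi_n)^{-1}\partial_s e^{is\Phi_n}$. The boundary term at $s=t$, after inverse Fourier transform, reproduces exactly the expression $\mathcal{V}(t)$ in \eqref{def:calV}, with the denominator $1+in(n-1)t|\xi|^2$ arising from the $\Phi_n^{-1}$ weight combined with the logarithmic correction inside $\widehat{w}$; the boundary term at $s=\infty$ vanishes since $u_+\in H^{0,2}$; and the bulk term involves $\partial_s A_n$, controlled by Lemma \ref{lem1:2}.

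The obstacle is that $\Phi_n^{-1}$ is singular at $\xi=0$, so the integration by parts is illegal without a cutoff. The remedy is to decompose the amplitude as $A_n=\mathcal{K}_\psi A_n+(1-\mathcal{K}_\psi)A_n$ with $\mathcal{K}_\psi=\psi(i\nabla/(|n|\sqrt{s}))$ for $\psi\in\mathcal{S}$ satisfying $\psi(0)=1$ and $\nabla\psi(0)=0$. On the low-frequency piece $\mathcal{K}_\psi A_n$, the singular weight $\Phi_n^{-1}$ is absorbed by the cutoff and the integration by parts closes. On the high-frequency piece $(\mathcal{K}_\psi-1)A_n$, Lemma \ref{mol:1.1}(ii) with $\theta\in(1,2]$---made available only by the flatness $\nabla\psi(0)=0$---yields a gain of order $s^{-\theta/2}|n|^{-\theta}$ from $\dot{H}^{s+\theta}$ into $\dot{H}^s$, sufficient to produce the decay $t^{-\delta/2}$ with $\delta>3/2$. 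For the $L^2_tL^6_x$ half of \eqref{eq:nrV} we estimate the Duhamel term through the endpoint Strichartz inequality on the dual $L^2_tL^{6/5}_x$ side, which is what permits the target rate $b\in(3/4,\delta/2)$ demanded by Proposition \ref{prop:abst}.

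The final bookkeeping is the summation in $n$: Lemma \ref{lem1:2a} controls the amplitude in $H^\delta$ with $n$-weight at worst $\langle n\rangle^{\delta'}$ for some $\delta<\delta'<5/3$, Lemma \ref{lem1:2} contributes the time-derivative piece, and each integration by parts costs an extra factor of $n$; combined with the $|n|^{-\theta}$ gain from Lemma \ref{mol:1.1}(ii) and the intrinsic $1/n$ from $\Phi_n$, the overall weight matches $|n|^{1+\eta}$ under the hypothesis $\eta>(\delta-3/2)/2$, and the series converges. For \eqref{eq:Vest} and \eqref{eq:Vest2}, I would simply estimate $\mathcal{V}$ from its explicit representation: the denominator $1+in(n-1)t|\xi|^2$ converts $u_+\in\dot{H}^{-\delta}$ into the factor $t^{-\delta/2}$, while replacing $1+in(n-1)t|\xi|^2$ by $in(n-1)t|\xi|^2$ produces the approximation $v_p$ with remainder bounded in $L^2_tL^6_x$ by the same Fourier-side weight. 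The principal difficulty is the delicate balance of $n$-powers: each integration by parts costs an $n$, and only the flat cutoff of Lemma \ref{mol:1.1}(ii) supplies the $|n|^{-\theta}$ gain with $\theta>1$ needed to achieve decay faster than $t^{-3/4}$, which the lower-dimensional argument of \cite{MM2} could not reach.
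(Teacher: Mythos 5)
Your high-level scheme matches the paper: factorize $\mathcal{N}(u_p)$ via $u_p = M(t)D(t)\widehat{w}$, split $\phi_n = |\widehat{w}|^{5/3-n}\widehat{w}^n$ into low and high frequencies with the flat regularizing operator $\mathcal{K}_{\psi_0}$, estimate the high-frequency piece with Lemma~\ref{mol:1.1}(ii) (where $\theta > 1$ is available because $\nabla\psi_0(0)=0$), integrate by parts in $s$ on the low-frequency piece, and close with the endpoint Strichartz estimate. Your bookkeeping of the $|n|$-weights and the role of the flatness is also on point.

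However, there is a genuine gap in your treatment of the phase singularity. You propose the integration by parts $e^{is\Phi_n}=(i\Phi_n)^{-1}\partial_s e^{is\Phi_n}$ and then claim that the singularity of $\Phi_n^{-1}$ at $\xi=0$ is ``absorbed by the cutoff $\mathcal{K}_\psi$'' on the low-frequency piece. This cannot be right for two reasons. First, $\mathcal{K}_\psi$ acts in the frequency variable dual to the variable on which $\phi_n$ lives, which is not the variable $\xi$ appearing in $\Phi_n(\xi)$ after the $\mathcal{F}U(-s)$-factorization; the two are independent. Second, even if they were the same variable, $\mathcal{K}_\psi$ is a low-pass filter, so the low-frequency piece retains $\xi$ near zero and the singularity of $\Phi_n^{-1}$ would remain — if anything it is the high-frequency piece that would avoid it. The paper's actual regularization is the algebraic identity $E^{1-\frac1n}(s)=A_n(s)\,\partial_s\!\bigl(sE^{1-\frac1n}(s)\bigr)$ with $A_n(s)=(1+i(1-\frac1n)s|x|^2)^{-1}$, which is \emph{bounded} at the origin because of the ``$1+$'' coming from the product rule $\partial_s(s\,\cdot)$. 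This is precisely the origin of the denominator $1+in(n-1)t|\xi|^2$ in \eqref{def:calV}; it does not come from ``the logarithmic correction inside $\widehat{w}$'' as you write. The cutoff $\mathcal{K}$ on the low-frequency piece actually serves a different purpose: it tames the Laplacian arising from $\partial_s U\!\bigl(\frac{n}{4s}\bigr)=U\!\bigl(\frac{n}{4s}\bigr)\bigl(\partial_s-\frac{in}{2s^2}\Delta\bigr)$ in the bulk term $I_3$, which would otherwise demand $H^2$-regularity of $\phi_n$. Without the $A_n$-trick your boundary term would carry the singular factor $(in(n-1)t|\xi|^2)^{-1}$ rather than the expression \eqref{def:calV}, and the $L^2$-estimates \eqref{eq:Vest}, \eqref{eq:Vest2} would not close.

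You should also make explicit the workhorse estimate corresponding to Lemma~\ref{useful:1:1}, which bounds $\bigl\|A_n(t)U\!\bigl(\frac{n}{4t}\bigr)D\!\bigl(\frac{n}{2}\bigr)\mathcal{K}\phi_n\bigr\|_{L^2}$ by $t^{-\delta/2}|n|^{-\delta+\eta}\bigl(\|\phi_n\|_{H^\delta}+\||\xi|^{-\delta}\phi_n\|_{L^2}\bigr)$; this is where $u_+\in H^{-\delta}$ enters and where the negative power of $|n|$ that offsets the cost of differentiation is actually extracted. Your reference to Lemmas~\ref{lem1:2a} and \ref{lem1:2} covers only the pointwise-in-$t$ regularity of $\phi_n$ and $\partial_t\phi_n$, not this operator bound.
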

The estimates \eqref{eq:nrV} and \eqref{eq:Vest} complete the proof of Proposition \ref{prop:main}.
The estimates \eqref{eq:Vest} and \eqref{eq:Vest2} imply \eqref{eq:behavior1} and \eqref{eq:behavior2}, respectively.
Hence, Theorems \ref{thm:main} and \ref{thm:main2} both follow from the above proposition.

\subsection{Integration by parts and extraction of the main part}

Without loss of generality, we may suppose that $b\ge3/4$.
Using $u_p  = M(t)D(t)\wha{w}(t) = D(t)E(t)\wha{w}(t)$ with $E(t) = e^{it|x|^2}$, we obtain
\[
	\mN (u_p) = \sum_{n \ne 0,1} g_n \( \frac{1}{2t} D(t) i^{-\frac32(n-1)} E^n(t) \phi_n (t) \),
\]
where 
\begin{equation*}
	\phi_n(t):= |\wha{w}(t)|^{\frac53-n}\wha{w}^n(t).
\end{equation*}
Let $\psi_0(x) = e^{-|x|^2/4} \in \m{S}$ and set
$\m{K}(t,n):=\m{K}_{\psi_0}(t,n)$ as in \eqref{def:Kpsi}.
Remark that $\nabla \psi_0 (0) =0$.
We decompose $\mN(u_p)$ into low frequency part and high frequency part,
\[
	\mN(u_p) = \m{P} + \m{Q}, 
\]
where
\begin{align*}
	\m{P} &= \sum_{n \ne 0,1} g_n \( \frac{1}{2t} D(t)\( i^{-\frac32(n-1)}E^n(t) \m{K} \phi_n(t) \) \), \\
	\m{Q} &= -\sum_{n \ne 0,1} g_n \( \frac{1}{2t} D(t)\( i^{-\frac32(n-1)}E^n(t) (\m{K}-1) \phi_n(t) \) \).
\end{align*}

As for the high frequency part $\m{Q}$, we have the following.
\begin{lemma} Fix $\eps>0$. 
There exists a constant $C=C(g_1,\norm{u_+}_{H^{0, \frac53}})$ such that
\begin{multline}
	\norm{\int_{t}^\I U(t-s) \m{Q}(s) ds}_{L^\I(T,\infty;L^2) \cap L^2(T,\infty;L^6)} \\ 
	\le CT^{-\frac\d2}(\log T)^3 \sum_{n \ne 0,1} |n|^{\e }|g_n|. 
	\label{non:n4}
\end{multline}
for any $T\ge 2$.
\end{lemma}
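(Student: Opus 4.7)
The plan is to estimate $\mathcal{Q}$ directly in $L^1_t([T,\infty);L^2_x)$ and then invoke the retarded endpoint Strichartz estimate. Since both $(\infty,2)$ and $(2,6)$ are admissible pairs in three space dimensions, the Keel--Tao inequality yields
\begin{equation*}
\Bigl\|\int_t^\infty U(t-s)\mathcal{Q}(s)\,ds\Bigr\|_{L^\infty_t([T,\infty);L^2_x)\,\cap\,L^2_t([T,\infty);L^6_x)} \le C\|\mathcal{Q}\|_{L^1_t([T,\infty);L^2_x)},
\end{equation*}
so the entire task reduces to bounding $\|\mathcal{Q}\|_{L^1_t L^2_x}$ by the right-hand side of \eqref{non:n4}.

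For the pointwise-in-$t$ $L^2_x$ bound, I exploit that $D(t)$ is an $L^2$-isometry and $|E^n(t)|=1$; the triangle inequality applied termwise in the sum defining $\mathcal{Q}$ then gives
\begin{equation*}
\|\mathcal{Q}(t)\|_{L^2_x} \le \frac{1}{2t}\sum_{n\neq 0,1}|g_n|\,\|(\mathcal{K}(t,n)-1)\phi_n(t)\|_{L^2_x}.
\end{equation*}
Since $\psi_0(0)=1$ and, crucially, $\nabla\psi_0(0)=0$, Lemma \ref{mol:1.1}(ii) is applicable with $\theta=\delta\in(3/2,5/3)\subset(1,2]$ and yields
\begin{equation*}
\|(\mathcal{K}(t,n)-1)\phi_n(t)\|_{L^2_x}\le C\,t^{-\delta/2}|n|^{-\delta}\,\|\phi_n(t)\|_{\dot H^\delta}.
\end{equation*}
Choosing $\delta'\in(\delta,5/3)$ with $\delta'-\delta\le\epsilon$ and invoking Lemma \ref{lem1:2a} to control $\|\phi_n(t)\|_{H^\delta}$ by $C\langle n\rangle^{\delta'}\langle\log t\rangle^2$, I obtain
\begin{equation*}
\|\mathcal{Q}(t)\|_{L^2_x}\le C\,t^{-1-\delta/2}\langle\log t\rangle^2\sum_{n\neq 0,1}|n|^{\epsilon}|g_n|,
\end{equation*}
and integration over $t\in[T,\infty)$ (using that $t^{-1-\delta/2}$ is integrable) yields the target $T^{-\delta/2}\langle\log T\rangle^3$ upper bound.

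The decisive step, and exactly the motivation of Remark \ref{rmk:regu}, is pushing the exponent $\theta$ in Lemma \ref{mol:1.1}(ii) above $1$. With the regularizers used in \cite{HWN,MM2} one is restricted to $\theta\le 1$, which yields only $\|\mathcal{Q}(t)\|_{L^2_x}\lesssim t^{-3/2}\langle\log t\rangle^2$ and hence $\|\mathcal{Q}\|_{L^1_t L^2_x}\lesssim T^{-1/2}\langle\log T\rangle^2$, falling short of the required decay $T^{-\delta/2}$ with $\delta/2>3/4$. The flatness $\nabla\psi_0(0)=0$ of the Gaussian $\psi_0(x)=e^{-|x|^2/4}$ is what enables the full range $\theta=\delta$, and this is the single new ingredient driving the high-frequency estimate.
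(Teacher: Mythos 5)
Your proof is correct and follows essentially the same route as the paper: reduce to the $L^1_t L^2_x$ bound on $\mathcal{Q}$ via the dual/retarded Strichartz estimate, use that $D(t)$ is an $L^2$-isometry and $|E^n|=1$, apply Lemma~\ref{mol:1.1}(ii) with $\theta=\delta>1$ (permitted by $\nabla\psi_0(0)=0$), and control $\|\phi_n\|_{\dot H^\delta}$ by Lemma~\ref{lem1:2a} with $\delta'$ slightly above $\delta$ to absorb the $|n|$-power into $|n|^\eps$. Your added commentary tying the exponent $\theta>1$ to Remark~\ref{rmk:regu} is accurate and clarifies why the Gaussian regularizer is chosen.
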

\begin{proof}
By Strichartz' estimate, it suffices to bound $\norm{\m{Q}}_{L^1(T,\infty;L^2)}$.
By using Lemma \ref{mol:1.1} (ii) and Lemma \ref{lem1:2a}, we have
\begin{align*}
	\Lebn{\m{Q}(t)}{2} \le{}& Ct^{-1} \sum_{n \ne 0,1}|g_n| \Lebn{(\m{K}-1) \phi_n}{2} \\
	\le{}& Ct^{-1-\frac\d2} \sum_{n \ne 0,1} |n|^{-\d} |g_n| \norm{\phi_n}_{\dot{H}^{\d}} \\
	\le{}& Ct^{-1-\frac\d2}
	 \norm{\wha{u_+}}_{L^\I}^{\frac23} 
	\norm{u_+}_{H^{0, \frac53}} \Jbr{ \norm{u_+}_{H^{0, \frac53}} }^{\frac23}  \\
	&{}\times \Jbr{ g_1\norm{\wha{u_+}}_{L^\I}^{\frac13} \log t}^2
	\sum_{n \ne 0,1} \Jbr{n}^{\e}|g_n|
\end{align*}
for any $\e>0$.
\end{proof}

Next, we consider the low-frequency part. 
By the factorization of $U(t)=M(t)D(t)\F M(t)$,
we see that
\begin{equation}\label{eq:target}
	\int_t^{\I}U(t-s) \m{P}(s) ds = U(t)\F^{-1} \int_t^{\I}\F U(-s) \m{P}(s) ds.
\end{equation}
Again by factorization of $U(t)$, we have 
\begin{equation}\label{eq:factorize}
	\F U(-s) D(s) E^{\rho}(s) = i^{\frac{3}{2}} E^{1-\frac{1}{\rho}}(s)U\( \frac{\rho}{4s} \) D\(\frac{\rho}{2}\)
\end{equation}
for $\rho \ne 0$ (see \cite{HWN}). Therefore, we further compute
\begin{align*}
	\F U(-s) \m{P}(s) &= \sum_{n \ne 0,1} i^{-\frac32(n-1)} g_n \frac1{2s} \F U(-s)D(s) E^{n}(s) \m{K}\phi_n (s) \\
	&= \sum_{n \ne 0,1} i^{-\frac32(n-2)} g_n \frac{1}{2s} E^{1-\frac{1}{n}}(s) U\( \frac{n}{4s} \) D\(\frac{n}{2}\) \m{K} \phi_n(s).
\end{align*}
Now, we have $E^{1-\frac{1}{n}}(s) = A_n(s) \pa_s (s E^{1-\frac{1}{n}}(s))$ for $n\neq0,1$,
where 
\begin{equation}\label{der:An}
A_n(s) := \( 1+ i\(1-\frac{1}{n}\)s|x|^2 \)^{-1}.
\end{equation}
Further,
\[
	\pa_{s}U \( \frac{n}{4s} \) = U\( \frac{n}{4s}\) \( \pa_s - \frac{i n}{2s^2} \D \).
\]
Therefore, an integration by parts gives us
\begin{align}
\begin{aligned}
	&\int_t^{\I} E^{1-\frac{1}{n}}(s) U\( \frac{n}{4s} \) D\(\frac{n}{2}\) \m{K}\f_n(s) \frac{ds}{s} \\
	={}& -E^{1-\frac{1}{n}}(t) A_n(t) U\( \frac{n}{4t} \) D\(\frac{n}{2}\) \m{K}\f_n(t) \\
	&{}- \int_t^{\I} E^{1-\frac{1}{n}}(s) s \pa_s \(s^{-1}A_n(s)\) U\( \frac{n}{4s} \) D\(\frac{n}{2}\) \m{K}\f_n(s) ds \\
	&{}- \int_t^{\I} E^{1-\frac{1}{n}}(s) A_n(s) U\( \frac{n}{4s} \) \( \pa_s - \frac{i n}{2s^2} \D \) D\(\frac{n}{2}\) \m{K}\f_n(s) ds 
\end{aligned}
	\label{im1:1}
\end{align}
Combining \eqref{eq:target}, \eqref{eq:factorize}, and \eqref{im1:1}, we reach to
\begin{equation}\label{eq:decomposition}
\begin{aligned}
	&{}i\int_t^{\I}U(t-s) \m{P}(s) ds \\
	=&{} iU(t)\F^{-1} \sum_{n\neq0,1} i^{-\frac32(n-2)}g_n \int_t^{\I} E^{1-\frac{1}{n}}(s) U\( \frac{n}{4s} \) D\(\frac{n}{2}\) \m{K}\f_n(s) \frac{ds}{2s} \\
	=&{} -iD(t) \sum_{n\neq0,1} \frac{g_n}{2 i^{\frac32(n-1)}} E^n(t) D\(\frac{n}2\)^{-1}U\(-\frac{n}{4t}\)  \\ 
	&{}\qquad \qquad A_n(t) U\( \frac{n}{4t} \) D\(\frac{n}{2}\) \m{K}\f_n(t)\\
	&{}- i\int_t^\I U(t-s)D(s) \sum_{n\neq0,1} \frac{g_n}{2 i^{\frac32(n-1)}} E^n(s)D\(\frac{n}2\)^{-1}U\(-\frac{n}{4s}\)\\
	&{}\qquad \qquad s \pa_s \(s^{-1}A_n(s)\) U\( \frac{n}{4s} \) D\(\frac{n}{2}\) \m{K}\f_n(s) ds\\
	&{}- i\int_t^\I U(t-s)D(s) \sum_{n\neq0,1} \frac{g_n}{2 i^{\frac32(n-1)}} E^n(s)D\(\frac{n}2\)^{-1}U\(-\frac{n}{4s}\)\\
	&\qquad \qquad A_n(s) U\( \frac{n}{4s} \) \( \pa_s - \frac{i n}{2s^2} \D \) D\(\frac{n}{2}\) \m{K}\f_n(s) ds\\
	&=: I_1 + I_2 + I_3.
\end{aligned}
\end{equation}
It will turn out that the term $I_1$ contains the main part and that $I_2$ and $I_3$ are remainder terms.

\subsection{Estimate of reminders}
Let us estimate $I_2$ and $I_3$ defined in in \eqref{eq:decomposition}.
The following estimate is crucial.
\begin{lemma} \label{useful:1:1}
Let $3/2 < \d < 5/3$ and $\eta > \frac12 \(  \d -\frac{3}2\)$. 
Let $\psi(x) \in \m{S}$ and set
$\m{K}(t,n):=\m{K}_{\psi}(t,n)$ as in \eqref{def:Kpsi}.
Then, it holds for any $t\ge1$ and $n\neq 0,1$ that
\begin{multline}\label{eq:useful}
	\Lebn{A_n(t) U\(\frac{n}{4t} \) D\(\frac{n}{2}\) \m{K}\f_n(t)}{2} \\
	\le C t^{-\frac{\d}{2}}|n|^{-\d+\eta} \( \Sobn{\phi_n(t)}{\d} + \Lebn{|\xi|^{-\d}\f_n(t)}{2} \).
\end{multline}
\end{lemma}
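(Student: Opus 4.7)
My plan is to exploit the spatial cut-off behaviour of $A_n(t,x) = (1+i(1-1/n)t|x|^2)^{-1}$, which is effective at the scale $|x|\sim 1/\sqrt{|1-1/n|\,t}$, together with the vanishing of $\phi_n$ near the origin encoded in the assumption that $\||\xi|^{-\delta}\phi_n\|_{L^2}<\infty$. The starting point is the elementary pointwise bound obtained by interpolating $|A_n|\le 1$ with $|A_n|\le |1-1/n|^{-1}(t|x|^2)^{-1}$, namely
\[
|A_n(t,x)|\le C\,t^{-\delta/2}|x|^{-\delta}\qquad\text{for }\delta\in[0,2],
\]
with $C$ uniform in $n\ne 0,1$ (using $|1-1/n|\ge 1/2$).

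Next I would apply the Schr\"odinger scaling identity $U(n/(4t))D(n/2)f(x)=(in)^{-3/2}[U(1/(4tn))f](x/n)$ and substitute $y=x/n$ to rewrite
\[
\|A_n(t)U(n/(4t))D(n/2)\mathcal{K}\phi_n\|_{L^2_x}
= \|\widetilde{A}_n(t,\cdot)\,U(\tau)\mathcal{K}\phi_n\|_{L^2_y},
\]
where $\widetilde{A}_n(t,y):=A_n(t,ny)=(1+in(n-1)t|y|^2)^{-1}$ cuts off at the scale $|y|\sim 1/(|n|\sqrt{t})$ --- precisely the natural scale of $\mathcal K$ --- and $\tau:=1/(4tn)$ satisfies $|\tau|\lesssim(|n|t)^{-1}$.

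I would then split the $y$-integration at $R_n:=1/(|n|\sqrt{t})$. On the outer region $|y|>R_n$, the pointwise bound $|\widetilde A_n|\le C|n|^{-\delta}t^{-\delta/2}|y|^{-\delta}$ reduces matters to $\||y|^{-\delta}U(\tau)\mathcal K\phi_n\|_{L^2(|y|>R_n)}$, which I would estimate via a high/low Fourier decomposition $\phi_n=\phi_n^{\mathrm{low}}+\phi_n^{\mathrm{high}}$ at a threshold $M$ to be optimised: the Schwartz decay of $\psi$ in $\mathcal K$ controls the high-frequency part through $\|\phi_n\|_{\dot H^\delta}$, while the low-frequency part is absorbed into $\||\xi|^{-\delta}\phi_n\|_{L^2}$ because, on $|y|>R_n$, the weight $|y|^{-\delta}$ is bounded by $(|n|\sqrt{t})^{\delta}$. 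On the inner region $|y|\le R_n$, one uses the trivial bound $|\widetilde A_n|\le 1$ and reduces to $\|U(\tau)\mathcal K\phi_n\|_{L^2(|y|\le R_n)}$; here Bernstein's inequality together with the frequency support induced by $\mathcal K$ and the low-frequency vanishing coming from $\||\xi|^{-\delta}\phi_n\|_{L^2}$ delivers the required estimate. Optimising $M$ produces the overall loss $|n|^\eta$.

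The hard part will be handling the super-criticality of the weight $|y|^{-\delta}$ with $\delta>3/2$ in $\mathbb R^3$: a direct pointwise bound on $|A_n|$ alone leads to integrals that diverge near the origin, so the inner--outer spatial split and the precise exploitation of both the Sobolev norm and the weighted norm of $\phi_n$ are essential, rather than optional. The restriction $\eta>\tfrac12(\delta-\tfrac32)$ arises precisely as the integrability condition needed for the inner-region contribution to close after the frequency decomposition.
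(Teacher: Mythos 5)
Your overall framing (exploit the decay of $A_n$ via a pointwise bound, rescale by $y=x/n$, and appeal to the vanishing of $\phi_n$ at the origin) matches the starting point, and the scaling identity you invoke is correct. However, the paper's proof is not a spatial inner/outer decomposition at all: it writes $|A_n|\le CB(t)^2$ with $B(t)=(1+t|x|^2)^{-1/2}$ and then splits the \emph{operator} by the triangle inequality,
\[
U\!\left(\tfrac{n}{4t}\right)D\!\left(\tfrac{n}{2}\right)\m K
=\Bigl(U\!\left(\tfrac{n}{4t}\right)-1\Bigr)D\!\left(\tfrac{n}{2}\right)\m K
+D\!\left(\tfrac{n}{2}\right)(\m K-1)
+D\!\left(\tfrac{n}{2}\right),
\]
estimating each piece by H\"older with $B^2\in L^{p}$, Sobolev embedding, and the key operator gains $|U(\tau)-1|\lesssim |\tau\Delta|^{\alpha}$ (with $\tau$ effectively $1/(4nt)$ after commuting through $D(n/2)$) and Lemma~\ref{mol:1.1}(ii) for $\m K-1$. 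These two gains are the engines supplying the $t^{-\delta/2}$ decay on the pieces not covered by the pure $|x|^{-\delta}$ weight, and the exponent constraint $\eta>\tfrac12(\delta-\tfrac32)$ arises from the requirement $p_1>2$ in the Sobolev step for the $(U-1)$ piece, not from the inner region as you suggest.

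Your plan never uses the smallness of $U(\tau)-1$ nor of $\m K-1$, and I do not see how it can close without them. Concretely, after your rescaling the function $U(\tau)\m K\phi_n$ is frequency-localized (effectively) at $|\xi|\lesssim |n|\sqrt t$, while the inner region has radius $R_n=(|n|\sqrt t)^{-1}$; these two scales saturate the uncertainty principle, so Bernstein on the inner region produces no gain. Tracking the parameters: the inner-region high-frequency piece forces the threshold $M\gtrsim |n|\sqrt t$ (to convert $\|\phi_n^{\mathrm{high}}\|_{L^2}\le M^{-\delta}\|\phi_n\|_{\dot H^\delta}$ into $t^{-\delta/2}|n|^{-\delta}$), while the low-frequency piece, on either region, forces $M\lesssim |n|^{-1}t^{-1/2}$ (to convert $\|\phi_n^{\mathrm{low}}\|_{L^2}\le M^\delta\||\xi|^{-\delta}\phi_n\|_{L^2}$ into $t^{-\delta/2}|n|^{-\delta}$). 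These requirements on $M$ are mutually exclusive, and introducing H\"older/Sobolev losses in the intermediate $L^q$ does not repair the sign mismatch. Put differently, the pointwise bound $|A_n|\lesssim t^{-\delta/2}|x|^{-\delta}$ alone is a Hardy-type weight with supercritical exponent $\delta>3/2$ in $\R^3$, and since $U(\tau)$ destroys the physical-space vanishing at the origin, the weighted norm $\||\xi|^{-\delta}\phi_n\|_{L^2}$ can only be used on the piece in which $U$ and $\m K$ have been replaced by the identity — exactly what the paper's operator decomposition accomplishes. As written, your outline has a genuine gap at this point.
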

Lemma \ref{useful:1:1} is proved in \cite{MM2} if $d=1,2$.
Although the proof for $d=3$ is essentially the same, 
we give it for self-containedness. 
\begin{proof}[Proof of Lemma \ref{useful:1:1}] 
We set $B(t) = (1+t|x|^2)^{-\frac{1}{2}}$, which yields $|A_n(t)| \le CB(t)^2$ for any $n\neq0,1$.
Then we have $|x|^{\t}B(t)^2  \le Ct^{-\frac{\t}{2}}$ for any $\t \in [0,2]$ 
and $B^{2} \in L^{(3/2)+\e} \cap L^\I(\R^d)$ for all $\e >0$.

By the triangle inequality, 
\begin{align*}
	\Lebn{B(t)^{2} U\(\frac{n}{4t} \) D\(\frac{n}{2}\) \m{K}\f_n(t)}{2} 
	\le{} &\Lebn{B(t)^{2}\( U\(\frac{n}{4t} \) -1 \) D\(\frac{n}{2}\) \m{K}\f_n(t)}{2} \\
	&+ \Lebn{B(t)^{2} D\(\frac{n}{2}\) \( \m{K}-1\) \f_n(t)}{2} \\
	&+ \Lebn{B(t)^{2} D\(\frac{n}{2}\) \f_n(t)}{2}\\
	=:{} & \mathrm{I}_n + \mathrm{II}_n+ \mathrm{III}_n
\end{align*}
For any $p_1>2$, one sees from Sobolev embedding and Lemma \ref{mol:1.1} (i) that
\[
\begin{aligned}
	\mathrm{I}_n
	&\le C\Lebn{B(t)^{2}}{p_1} \Lebn{|\nabla|^{\frac{3}{p_1}} \left| \frac{n|\n|^2}{t}\right|^{\frac12(\d-\frac{3}{p_1})} D\(\frac{n}{2}\) \m{K}\f_n(t)}{2} \\
	&\le Ct^{-\frac{\d}{2}} |n|^{-\d + (\frac{\d}2-\frac{3}{2p_1})} \norm{\f_n(t)}_{\dot{H}^{\d}}. 
\end{aligned}
\]
By definition of $\eta$, we are able to choose $p_1$ so that
\begin{equation*}
	\frac{\d}2-\frac{3}{2p_1} < \eta.
\end{equation*}
By Lemma \ref{mol:1.1} (ii), we estimate
\begin{align*}
	\mathrm{II}_n
	&\le C\Lebn{B^{2}}{p_2} \Lebn{|\n|^{\frac{3}{p_2}} D\(\frac{n}{2}\) (\m{K}-1)\f_n(t)}{2} \\
	&\le Ct^{-\frac{3}{2p_2}} |n|^{-\frac{3}{p_2}} \Lebn{|\n|^{\frac{3}{p_2}}\(\m{K}-1\)\f_n(t)}{2} \\
	&\le Ct^{-\frac12(\frac{3}{p_2}+ \t_2)} |n|^{-\frac{3}{p_2}- \t_2} \norm{\f_n(t)}_{\dot{H}^{\frac{3}{p_2}+\t_2}}
\end{align*}
for any $p_2\in (2,\I]$ and $\theta_2\in [0,2]$.
Taking $p_2$ and $\theta_2$ so that $\t_2 + \frac{3}{p_2} =\d$, we obtain desired estimate for $\mathrm{II}$.
Finally, we have
\begin{align*}
	&\mathrm{III}_n
	\le Ct^{-\frac{\d}{2}} \Lebn{|\xi|^{-\d}D\(\frac{n}{2}\) \f_n(t)}{2} \le Ct^{-\frac{\d}{2}} |n|^{-\d} \Lebn{|\xi|^{-\d}\f_n(t)}{2}.
\end{align*}
These estimates yield
\begin{multline*}
\Lebn{B^{2} U\(\frac{n}{4t} \) D\(\frac{n}{2}\) \m{K}\f_n(t)}{2} \\
\le Ct^{-\frac{\d}{2}}|n|^{-\d+\eta} \( \norm{\f_n(t)}_{H^{\d}} + \Lebn{|\xi|^{-\d}\f_n(t)}{2} \).
\end{multline*}
This completes the proof.
\end{proof}

Let us now give the estimate on $I_2$ and $I_3$.
\begin{lemma}
There exists $C=C(g_1,\norm{u_+}_{H^{0, 2}\cap H^{-\delta}})>0$ such that
\[
	\norm{I_2+I_3}_{L^\I([T,\I);L^2) \cap L^2([T,\I);L^6)}
	\le CT^{-\frac{\d}{2}} (\log T)^3\sum_{n \ne 0,1} |n|^{1+\eta} |g_n|
\]
for any $T\ge2$.
\end{lemma}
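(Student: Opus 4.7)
The plan is to apply the endpoint Strichartz estimate, which dominates $\norm{\int_t^{\I}U(t-s)F(s)ds}_{L^{\I}_tL^2_x\cap L^2_tL^6_x}$ by $\norm{F}_{L^1_t([T,\I);L^2_x)}$, thereby reducing matters to pointwise-in-$s$ $L^2_x$ bounds on the integrands of $I_2$ and $I_3$. Since $U(\tau)$, $M(\tau)$, and $D(\tau)$ (for real $\tau\neq 0$), as well as multiplication by $E^n(s)$, are all $L^2$-isometries, the task further reduces to estimating expressions of the form $\norm{\Lambda_n(s)U(n/(4s))D(n/2)\m{K}\varphi_n(s)}_{L^2}$, where $\Lambda_n(s)$ is a pointwise multiplier dominated by $B(s)^2$ (the weight introduced in the proof of Lemma \ref{useful:1:1}); since the proof of Lemma \ref{useful:1:1} uses only $|A_n|\le CB^2$, it extends verbatim to any such multiplier.

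For $I_2$ I compute directly $s\pa_s(s^{-1}A_n(s))=s^{-1}(A_n(s)^2-2A_n(s))$. Because $|1-1/n|\ge 1/2$ for every integer $n\ne 0,1$, both $|A_n|$ and $|A_n|^2$ are pointwise bounded by $CB(s)^2$ uniformly in $n$. Combining the extended version of Lemma \ref{useful:1:1} with Lemma \ref{lem1:2a} for $\Sobn{\f_n}{\d}$ and with the uniform-in-$n$ bound $\norm{|\x|^{-\d}\f_n}_{L^2}\le C\Lebn{\wha{u_+}}{\I}^{4/3}\norm{u_+}_{\dot{H}^{-\d}}$ (which follows from the pointwise identity $|\f_n(\x)|=|\wha{u_+}(\x)|^{5/3}$ and H\"older's inequality), the $L^2_x$ norm of the $I_2$ integrand is bounded by $Cs^{-1-\d/2}\Jbr{\log s}^2\sum_{n\ne 0,1}|g_n||n|^{1+\eta}$ after choosing $\d'>\d$ in Lemma \ref{lem1:2a} close enough to $\d$. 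Integration on $[T,\I)$ then yields the desired $T^{-\d/2}\Jbr{\log T}^3$ decay.

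For $I_3$ the crucial observation is that $D(n/2)$ does not depend on $s$ and satisfies $\Delta D(n/2)=n^{-2}D(n/2)\Delta$, so $(\pa_s-(in/2s^2)\Delta)D(n/2)\m{K}\f_n$ splits into the three pieces $D(n/2)(\pa_s\m{K})\f_n$, $D(n/2)\m{K}\pa_s\f_n$, and $-(i/(2ns^2))D(n/2)\m{K}\Delta\f_n$. A short computation gives $\pa_s\m{K}=-(2s)^{-1}\ti{\m{K}}_3$ with symbol $\ti\psi_3(\zeta)=\zeta\cdot\n\psi_0(\zeta)$, and $\m{K}\Delta=-|n|^2s\,\ti{\m{K}}_2$ with symbol $\ti\psi_2(\zeta)=|\zeta|^2\psi_0(\zeta)$; both $\ti\psi_2$ and $\ti\psi_3$ belong to $\m{S}$ and satisfy $\ti\psi(0)=\n\ti\psi(0)=0$, so Lemma \ref{useful:1:1} applies (in fact the $\mathrm{III}_n$ term in its proof is absent). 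For the middle piece I additionally use Lemma \ref{lem1:2} to bound $\Sobn{\pa_s\f_n}{\d}$ together with the pointwise identity $|\pa_s\f_n|=|n||g_1|s^{-1}|\wha{u_+}|^{7/3}$ to bound $\norm{|\x|^{-\d}\pa_s\f_n}_{L^2}$. Each of the three contributions turns out to be of the same order as the $I_2$ bound.

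The main obstacle is precisely the $\Delta$-contribution in $I_3$, which naively produces a factor $|n|^2 s$ that would overwhelm the summability hypothesis. This loss is compensated exactly by the flatness condition $\n\psi_0(0)=0$, which via Lemma \ref{mol:1.1}(ii) allows the exponent $\t$ to be taken in $(1,2]$ and supplies the gain $t^{-\d/2}|n|^{-\d}$ with $\d$ arbitrarily close to $5/3$; this is the reason for the specific choice $\psi_0(x)=e^{-|x|^2/4}$, cf.\ Remark \ref{rmk:regu}.
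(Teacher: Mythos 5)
Your proof takes essentially the same approach as the paper: the same integration-by-parts computation for $I_2$, the same three-term decomposition for $I_3$ (your $\ti{\m{K}}_2$ and $\ti{\m{K}}_3$ coincide, up to the constants $\frac{i}{2}$ and $-\frac12$, with the paper's $\m{K}_2$ and $\m{K}_1$), and the same reduction via an extended Lemma~\ref{useful:1:1} together with Lemmas~\ref{lem1:2a} and~\ref{lem1:2}. Two small corrections: the exponent in your bound on $\norm{|\xi|^{-\delta}\f_n}_{L^2}$ should be $2/3$, not $4/3$, since $|\f_n|=|\wha{u_+}|^{5/3}=|\wha{u_+}|^{2/3}\cdot|\wha{u_+}|$ (the $4/3$ exponent belongs to $\pa_s\f_n$). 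Also, in the last paragraph, the ``naive'' factor $|n|^2 s$ coming from $\m{K}\Delta$ is cancelled entirely by the prefactor $-i/(2ns^2)$, leaving only $n/s$; this is elementary algebra and not the flatness. The flatness $\nabla\psi_0(0)=0$ enters the $I_3$ estimate only insofar as it makes $\nabla\ti\psi_3(0)=0$ (for $\ti\psi_2$ it is automatic), and in Lemma~\ref{useful:1:1} one can in any case choose $\theta_2\le1$; the genuine reason for the special choice of $\psi_0$ is the high-frequency estimate of $\m{Q}$, exactly as Remark~\ref{rmk:regu} explains.
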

\begin{proof}
By Strichartz' estimate, the identity $\pa_s \(s^{-1}A(s)\) = -2s^{-2} A(s) + s^{-2} \(A(s)\)^2$, and Lemma \ref{useful:1:1},
we compute
\begin{equation}\label{main:1:2}
\begin{aligned}
	&\norm{I_2}_{L^\I(T,\I;L^2) \cap L^2(T,\I;L^6)}\\
	&\le C
	\sum_{n\neq 0,1}|g_n| \int_T^\I \norm{A(s) U\( \frac{n}{4s} \) D\(\frac{n}{2}\) \m{K}\f_n(s)}_{L^2} \frac{ds}s\\
	&\le C\sum_{n\neq 0,1} |g_n|
	|n|^{-\d+\eta} \int_T^{\I} s^{-\frac{\d}{2}} \norm{\phi_n(s)}_{\dot{H}^{\d} \cap H^{0, -\d}} \frac{ds}s.
\end{aligned}
\end{equation}

We estimate $\Lebn{I_3}{2}$. We introduce the regularizing operators
$\m{K}_j := \m{K}_{\psi_j}$ ($j=1,2$) by \eqref{def:Kpsi} with
\begin{align*}
	&\psi_{1}(x) = -\frac{1}2 x\cdot \nabla \psi_0 \in \Sch, &
	&\psi_{2}(x) = \frac{i}2  |x|^2 \psi_0(x) \in \Sch.
\end{align*}
Remark that $\nabla \psi_1(0)=\nabla \psi_2(0)=0$.
We then have an identity
\begin{align*}
	\(\pa_s - \frac{in}{2s^2} \D \) D\(\frac{n}{2}\) \m{K}\f_n ={}&  D\(\frac{n}{2}\) \m{K}\pa_s\f_n + s^{-1} D\(\frac{n}{2}\) \m{K}_1 \f_n \\
	&{}+ s^{-1} n  D\(\frac{n}{2}\) \m{K}_2 \f_n.
\end{align*}
Since $\m{K}_1$ and $\m{K}_2$ of the form \eqref{def:Kpsi}, 
the estimate \eqref{eq:useful} is valid also for these regularizing operators.
Then, mimicking the estimate of $I_2$, we have
\begin{align}
\begin{aligned}
	&\norm{I_3}_{L^\I(T,\I;L^2) \cap L^2(T,\I;L^6)}\\
	&\le C\sum_{n\neq0,1}|g_n| |n|^{-\d+\eta}  \int_T^{\I} s^{-\frac{\d}{2}} \norm{\pa_s \phi_n(s)}_{\dot{H}^{\d} \cap H^{0, -\d}} ds\\
	&\qquad + C \sum_{n\neq0,1}|g_n| |n|^{-\d+\eta} \int_T^{\I} s^{-\frac{\d}{2}-1}  \norm{\phi_n(s)}_{\dot{H}^{\d} \cap H^{0, -\d}} ds \\
	&\qquad + C \sum_{n\neq0,1}|g_n| |n|^{-\d+1+\eta} \int_T^{\I} s^{-\frac{\d}{2}-1}  \norm{\phi_n(s)}_{\dot{H}^{\d} \cap H^{0, -\d}} ds
\end{aligned}
\label{main:1:3}
\end{align}
for $T\ge 2$.
By 
\eqref{main:1:2}, \eqref{main:1:3}, Lemmas \ref{lem1:2a} and \ref{lem1:2}, and the estimates
\begin{align*}
	\norm{\phi_n}_{H^{0,-\delta}}
	&\le C\norm{\wha{u_+}}_{L^\I}^{\frac23} \norm{u_+}_{\dot{H}^{-\delta}}, \\
	\norm{\partial_t \phi_n}_{H^{0,-\delta}}
	&\le C\frac{|g_1|}{t}\norm{\wha{u_+}}_{L^\I}^{\frac43} \norm{u_+}_{\dot{H}^{-\delta}},
\end{align*}
we obtain the desired estimate.
\end{proof}

\subsection{Estimates on the main contribution} 
We estimate $I_1$ in \eqref{eq:decomposition}.
Recall that
\[
	\mathcal{V}
	= - \F^{-1} \sum_{n\neq0,1} \frac{g_n}{2i^{\frac32 n}} M\(-\frac{n}{4t}\)   A_n(t)  D\(\frac{n}{2}\) \f_n(t).
\]
With the following proposition, we obtain \eqref{eq:nrV}.
\begin{proposition}
There exists $C=C(g_1,\norm{u_+}_{H^{0, \frac53}})>0$ such that
\begin{equation*}
	\norm{I_1 - \mathcal{V}}_{
	L^\I ([T,\I);L^2) \cap L^2 ([T,\I);L^6)} \le C T^{-\frac\d2}\Jbr{\log T}^3 \sum_{n\neq0,1} |n|^{1+\eta}|g_n|
\end{equation*}
holds for any $T\ge2$.
\end{proposition}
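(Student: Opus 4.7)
The strategy is to recognize that $\mathcal{V}$ coincides with the expression obtained from $I_1$ upon formally substituting $U(n/(4t))\to\mathrm{Id}$ and $\m{K}\to\mathrm{Id}$, and then to estimate the resulting error term by term. Applying the factorization identity $\F U(-t) D(t) E^n(t) = i^{3/2} E^{1-1/n}(t) U(n/(4t)) D(n/2)$ to the outer $D(t)E^n(t)$ factor in the definition of $I_1$---after canceling the $D(n/2)D(n/2)^{-1}$ and $U(n/(4t))U(-n/(4t))$ pairs that arise---gives
\[
I_1 = -i^{5/2}\!\!\sum_{n\ne 0,1}\!\frac{g_n}{2\,i^{3(n-1)/2}}\, U(t)\F^{-1}\bigl[E^{1-1/n}(t)A_n(t)U(n/(4t))D(n/2)\m{K}\f_n(t)\bigr].
\]
A direct Fourier-side calculation---using $(D(n/2)\f_n)(\xi)=(in)^{-3/2}\f_n(\xi/n)$, the identity $e^{-it|\xi|^2}E^{1-1/n}(t,\xi)=e^{-it|\xi|^2/n}$, and the fact that the accumulated phase factors match modulo $i^4=1$---confirms that the substitution $U(n/(4t)),\m{K}\to\mathrm{Id}$ produces exactly $\mathcal{V}$. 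Hence
\[
I_1 - \mathcal{V} = -i^{5/2}\!\!\sum_{n\ne 0,1}\!\frac{g_n}{2\,i^{3(n-1)/2}}\, U(t)\F^{-1}\bigl[E^{1-1/n}(t)A_n(t)\Delta_n(t)\bigr],
\]
where $\Delta_n(t) := [U(n/(4t))-1]D(n/2)\m{K}\f_n(t) + D(n/2)[\m{K}-\mathrm{Id}]\f_n(t)$.

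For the $L^\I_t L^2_x$ bound, since $U(t)\F^{-1}$ and $E^{1-1/n}(t)$ are $L^2$-isometries, it suffices to estimate $\|A_n(t)\Delta_n(t)\|_{L^2}$ term by term. The two summands of $\Delta_n$ are precisely the pieces $\mathrm{I}_n$ and $\mathrm{II}_n$ appearing in the proof of Lemma~\ref{useful:1:1}, so the same H\"older-Sobolev argument (invoking $|A_n|\le C B(t)^2$) yields
\[
\|A_n(t)\Delta_n(t)\|_{L^2} \le C t^{-\d/2}|n|^{-\d+\eta}\|\f_n(t)\|_{\dot H^\d}.
\]
Combining this with Lemma~\ref{lem1:2a} (which provides $\|\f_n\|_{\dot H^\d}\le C\Jbr{n}^{\d'}\Jbr{\log t}^2$ for any $\d'\in(\d,5/3)$), and summing using the hypothesis $\sum|n|^{1+\eta}|g_n|<\I$, gives the required rate in $L^\I_t L^2_x$.

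For the $L^2_t L^6_x$ bound we invoke the Sobolev embedding $\dot H^1\hookrightarrow L^6$ (valid in three dimensions), which---together with the commutation of $|\nabla|$ through $U(t)$ and $E^{1-1/n}(t)$---reduces the task to bounding $\||\xi|A_n(t,\xi)\Delta_n(t,\xi)\|_{L^2_\xi}$. A short computation shows $\||\xi|A_n(t,\cdot)\|_{L^p_\xi}\le C t^{-1/2-3/(2p)}$ for every $p>3$, uniformly in $n\ne 0,1$ (the uniformity stems from $|1-1/n|$ being bounded above and below). Pairing via H\"older ($1/p+1/q=1/2$) together with Sobolev $L^q\hookleftarrow\dot H^{3/p}$ applied to $\Delta_n$, and then running on $\Delta_n$ the same interpolation scheme as in Lemma~\ref{useful:1:1} (with the flatness property $\nabla\psi_0(0)=0$ essential for the $\m{K}-\mathrm{Id}$ piece to reach $\dot H^\d$-regularity), produces a pointwise-in-time bound of the form
\[
\|I_1-\mathcal{V}\|_{L^6}\le C t^{-1/2-\d/2}\Jbr{\log t}^2 \sum_{n\ne 0,1}|g_n||n|^{1+\eta},
\]
whose $L^2_t([T,\I))$-norm is $\le C T^{-\d/2}\Jbr{\log T}^2$, as required.

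The principal technical obstacle is the bookkeeping of the $n$-dependence in the $L^2_t L^6_x$ estimate: the chain of H\"older, Sobolev, and $U(\tau)-1$/$\m{K}-\mathrm{Id}$ interpolations produces an $n$-power of $\d'-\d/2-3/(2p)$, which must be arranged to sit at or below $1+\eta$. Choosing $p$ slightly larger than $3$ and $\d'$ slightly larger than $\d$---both permissible by the assumption $\eta>\tfrac12(\d-\tfrac32)$---furnishes the required slack.
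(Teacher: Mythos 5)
Your proposal is correct and follows essentially the same route as the paper: you split $I_1-\mathcal{V}$ by telescoping $U(\tfrac{n}{4t})\m{K}\to\mathrm{Id}$ into a $(U-1)\m{K}$ piece and a $\m{K}-1$ piece, bound $A_n$ by $B(t)^2$, and invoke Lemma~\ref{mol:1.1} together with Lemma~\ref{lem1:2a}, which is exactly the paper's decomposition into $\mathrm{IV}+\mathrm{V}+\mathrm{VI}$. The only cosmetic divergence is in the $L^2_tL^6_x$ step: you pass through the Sobolev embedding $\dot H^1\hookrightarrow L^6$ and estimate $\||\xi|A_n\Delta_n\|_{L^2}$ via H\"older with $p>3$, whereas the paper applies the dispersive $L^{6/5}\to L^6$ bound to the outer group operators and then H\"olders with $p_4\in(3/2,3]$; these are dual renditions of the same count and both land on $t^{-\d/2-1/2}$ with an acceptable $n$-power.
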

\begin{proof}
We further break up $I_1$ as
\begin{align*}
	I_1 ={}& 
	-iD(t) \sum_{n\neq0,1} \frac{g_n}{2i^{\frac32(n-1)}} E^n(t) D\(\frac{n}2\)^{-1}U\(-\frac{n}{4t}\)  A_n(t) \(U\( \frac{n}{4t} \)-1\) D\(\frac{n}{2}\) \m{K}\f_n(t)\\
	&-iD(t) \sum_{n\neq0,1} \frac{g_n}{2i^{\frac32(n-1)}} E^n(t) D\(\frac{n}2\)^{-1}U\(-\frac{n}{4t}\)  A_n(t) D\(\frac{n}{2}\) (\m{K}-1)\f_n(t)\\
	&-iD(t) \sum_{n\neq0,1} \frac{g_n}{2i^{\frac32(n-1)}} E^n(t) D\(\frac{n}2\)^{-1}U\(-\frac{n}{4t}\)  A_n(t) D\(\frac{n}{2}\) \f_n(t)\\
	&=: \mathrm{IV} + \mathrm{V} + \mathrm{VI}.
\end{align*}
A computation shows that $\mathrm{VI}=\mathcal{V}$. Since $|A_n(t)|\le 1$, we have
\begin{align*}
	\norm{\mathrm{IV}}_{L^2_x} 
	&\le C \sum_{n\neq0,1} |g_n| \norm{ \(\frac{|n|}{t}\)^{\d/2} |\n|^\d D\(\frac{n}{2}\) \m{K}\f_n(t) }_{L^2}\\
	&\le C t^{-\frac\d2} \sum_{n\neq0,1} |n|^{-\frac\d2} |g_n| \norm{\phi_n(t)}_{\dot{H}^\d}
\end{align*}
and
\begin{align*}
	\norm{\mathrm{V}}_{L^2_x} 
	&\le C \sum_{n\neq0,1} |g_n| \norm{(\m{K}-1)\f_n(t) }_{L^2}
	\le C t^{-\frac\d2} \sum_{n\neq0,1} |n|^{-\d} |g_n| \norm{\phi_n(t)}_{\dot{H}^\d}.
\end{align*}
Hence, we have $L^\I(T,\I;L^2)$-estimate.
Similarly, by $L^p-L^q$ estimate of the Schr\"odinger group, the H\"older estimate,
Sobolev embedding, and Lemma \ref{mol:1.1} (i), 
we have
\begin{align*}
	\norm{\mathrm{IV}}_{L^6_x}
	&\le C \sum_{n\neq0,1} |g_n| \norm{B(t)^2 \(U\( \frac{n}{4t} \)-1\) D\(\frac{n}{2}\) \m{K}\f_n(t) }_{L^{\frac65}}\\
	&\le C \sum_{n\neq0,1} |g_n| \Lebn{B^{2}(t)}{p_4} \Lebn{|\nabla|^{\frac{3}{p_4}-1} \left| \frac{n|\n|^2}{t}\right|^{\frac\d{2}+\frac12-\frac{3}{2p_4}} D\(\frac{n}{2}\) \m{K}\f_n(t)}{2} \\
	&\le Ct^{-\frac{\d}{2}-\frac12} \sum_{n\neq0,1} |n|^{-\d + (\frac{\d}2+\frac12-\frac{3}{2p_4})} |g_n| \norm{\f_n(t)}_{\dot{H}^{\d}}
\end{align*}
for any $3 \ge p_4>3/2$.
By definition of $\eta$, we are able to choose $p_4$ so that
\begin{equation*}
	\frac{\d}2+ \frac12-\frac{3}{2p_4} < 1+\eta.
\end{equation*}
By H\"older's inequality and Lemma \ref{mol:1.1} (ii), we obtain
\begin{align*}
	\norm{\mathrm{V}}_{L^6_x}
	&\le C \sum_{n\neq0,1} |g_n| \Lebn{B(t)^{2}}{3} \Lebn{(\m{K}-1)\f_n(t)}{2} \\
	&\le Ct^{-\frac{\d}2-\frac12} \sum_{n\neq0,1} |n|^{-\d}|g_n| \norm{\f_n(t)}_{\dot{H}^{\d}}.
\end{align*}
This competes the proof.
\end{proof}

We are in a position to finish the proof of Proposition \ref{non:im1}.

\begin{proof}[Proof of Proposition \ref{non:im1}]
It suffices to establish \eqref{eq:Vest} and \eqref{eq:Vest2}.
The estimate \eqref{eq:Vest} follows from
\[
	\norm{\mathcal{V}}_{L^2} \le C \sum_{n\neq0,1} |g_n| \norm{B(t)^2 D\(\frac{n}2\) \f_n(t)}_{L^2}.
\]
The right hand side is $\mathrm{III}_n$ in the proof of Lemma \ref{useful:1:1}.

Finally, we prove \eqref{eq:Vest2}. Since
\begin{align*}
\mathcal{V}
	&=v_p - \sum_{n\neq0,1} \frac{g_n}{2i^{\frac32(n+1)}} C_n(t) M\(\frac{t}{n}\) D (t)\( U\(-\frac{1}{4nt}\)-1\)  \f_n(t),
\end{align*}
where $C_n(t):=\F^{-1} A_n(t) \F=(1+i(\frac{n-1}{n})t\Delta)^{-1}$.
Since $\norm{\nabla C_n(t)}_{\mathcal{L}(L^2)} \le Ct^{-1/2}$ for any $n\neq 0,1$ and $t\ge2$,
we see from Sobolev embedding that
\begin{align*}
	\norm{\mathcal{V}-v_p}_{L^6_x}
	\le& C t^{-\frac12}\sum_{n\neq0,1} |g_n| \norm{\( U\(-\frac{1}{4nt}\)-1\)  \f_n(t)}_{L^2}\\
	\le& C t^{-\frac\d2-\frac12}\sum_{n\neq0,1} |n|^{-\frac\d2}|g_n| \norm{\f_n(t)}_{\dot{H}^\d}.
\end{align*}
Hence, we have the desired estimate.
\end{proof}

We finally give an outline to obtain the asymptotics of $\mathcal{V}(t)$ in Remark \ref{rmk:MTT}.
Note that
\[
	\mathcal{V}
	= - \F^{-1} \sum_{n\neq0,1} \frac{g_n}{2i^{\frac32 n}} M\(-\frac{n}{4t}\)  |x|^2 A_n(t)  D\(\frac{n}{2}\) (n^{-2}|x|^{-2}) \f_n(t).
\]
As $|x|^2 A_n(t)=\frac{n}{i(n-1)t}(1-A_n(t))$,
\begin{align*}
	\mathcal{V}
	={}& - \F^{-1} \sum_{n\neq0,1} \frac{ g_n}{2i^{1+\frac32 n}n(n-1)t} M\(-\frac{n}{4t}\) D\(\frac{n}{2}\) |x|^{-2} \f_n(t)\\
	&{}+ \F^{-1} \sum_{n\neq0,1} \frac{ g_n}{2i^{1+\frac32 n}n(n-1)t} M\(-\frac{n}{4t}\) A_n(t) D\(\frac{n}{2}\) |x|^{-2} \f_n(t)\\
	=:{}&\mathrm{VII}+\mathrm{VIII}.
\end{align*}
By $\norm{|x|^\zeta A_n(t)}_{\mathcal{L}(L^2)} \lesssim t^{-\frac\zeta2}$ for any $\zeta \in [0,2]$,
$\mathrm{VIII}$ is small if $\wha{u_+} \in L^{\I} \cap H^{0, -2-}$.
Further, since $\F^{-1}M\(-\frac{n}{4t}\)=U(\frac{t}n)\F^{-1}=M(\frac{t}n)D(\frac{t}n)U(-\frac{n}{4t}) \sim M(\frac{t}n)D(\frac{t}n)$,
\begin{align*}
	\mathrm{VII} =&
	-\sum_{n\neq0,1} \frac{ g_n}{2i^{1+\frac32 (n+1)}n(n-1)t} M\(\frac{t}n\) D\(t\) |x|^{-2} \f_n(t)
	+o(t^{-1})\\
	=& \sum_{n\neq0,1} \frac{g_n}{n(1-n)}  \abs{\frac{2t}{x}}^2 |u_p(t)|^{\frac53-n} u_p(t)^n
	+o(t^{-1})
\end{align*}
as $t\to\I$ for suitable $u_+$. We omit the detail.

\appendix
\section{A calculation of Fourier coefficients}\label{sec:appendix1}

In this appendix, we demonstrate an explicit formula of Fourier coefficients of the function $g(\theta) = 
|\cos \theta |^{\alpha-1}\cos \theta $.
This contains our example in Remark \ref{rmk:example}.

\begin{proposition}
Let $\alpha>-1$ be not an odd integer.
Let 
\[
	g_n := \frac1{2\pi} \int_{-\pi}^\pi |\cos \theta |^{\alpha-1}\cos \theta \cos n\theta d\theta
\]
 for $n\in \Z$.
Then, $g_n=0$ for even $n$ and 
\begin{equation}\label{eq:explicit}
	g_n = \frac{(-1)^{\frac{n-1}2} \Gamma(\frac{\alpha+2}2) \Gamma(\frac{n-\alpha}2)}{\sqrt\pi 
	\Gamma(- \frac{\alpha-1}2) \Gamma(\frac{n+\alpha+2}2)}
\end{equation}
for odd $n$.
In particular, $g_n = O(|n|^{-\alpha-1})$ as $|n|\to\I$.
\end{proposition}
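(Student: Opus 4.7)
The plan is to reduce the integral to one over $[0,\pi/2]$ by symmetry, evaluate it via a classical Beta-function identity, and then rearrange using Euler's reflection and Legendre's duplication formulas for $\Gamma$.

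First, since $g$ is even, $g_n=\frac{1}{\pi}\int_0^{\pi}g(\theta)\cos(n\theta)\,d\theta$. The relations $g(\pi-\theta)=-g(\theta)$ and $\cos(n(\pi-\theta))=(-1)^n\cos(n\theta)$, combined with the substitution $\theta\mapsto\pi-\theta$ on $[\pi/2,\pi]$, give
\[
\int_0^{\pi}g(\theta)\cos(n\theta)\,d\theta=\bigl(1-(-1)^n\bigr)\int_0^{\pi/2}(\cos\theta)^\alpha\cos(n\theta)\,d\theta,
\]
using $g(\theta)=(\cos\theta)^\alpha$ on $[0,\pi/2]$. This vanishes for even $n$. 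For odd $n$, I would invoke the classical formula
\[
\int_0^{\pi/2}(\cos\theta)^\alpha\cos(n\theta)\,d\theta=\frac{\pi\,\Gamma(\alpha+1)}{2^{\alpha+1}\Gamma\bigl(\tfrac{\alpha+n+2}{2}\bigr)\Gamma\bigl(\tfrac{\alpha-n+2}{2}\bigr)},\qquad \alpha>-1,
\]
(provable by writing $\cos(n\theta)=\mathrm{Re}\,e^{in\theta}$, substituting $t=e^{2i\theta}$, and identifying a Beta integral, or by contour deformation to the imaginary axis) which yields
\[
g_n=\frac{\Gamma(\alpha+1)}{2^{\alpha}\Gamma\bigl(\tfrac{\alpha+n+2}{2}\bigr)\Gamma\bigl(\tfrac{\alpha-n+2}{2}\bigr)}.
\]

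To cast this in the form \eqref{eq:explicit}, I would apply Euler's reflection to $\Gamma\bigl(\tfrac{\alpha-n+2}{2}\bigr)$. For odd $n$, since $\cos(n\pi/2)=0$ and $\sin(n\pi/2)=(-1)^{(n-1)/2}$, the angle-sum identity simplifies $\sin\bigl(\pi(\alpha-n+2)/2\bigr)$ to $(-1)^{(n-1)/2}\cos(\pi\alpha/2)$, so that
\[
\Gamma\bigl(\tfrac{\alpha-n+2}{2}\bigr)=\frac{(-1)^{(n-1)/2}\,\pi}{\cos(\pi\alpha/2)\,\Gamma\bigl(\tfrac{n-\alpha}{2}\bigr)}.
\]
The Legendre duplication identity $\Gamma(\alpha+1)=2^{\alpha}\pi^{-1/2}\Gamma\bigl(\tfrac{\alpha+1}{2}\bigr)\Gamma\bigl(\tfrac{\alpha+2}{2}\bigr)$ together with a second reflection $\cos(\pi\alpha/2)\Gamma\bigl(\tfrac{\alpha+1}{2}\bigr)=\pi/\Gamma\bigl(\tfrac{1-\alpha}{2}\bigr)$ then combine to produce \eqref{eq:explicit}. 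The asymptotic $g_n=O(|n|^{-\alpha-1})$ follows immediately from Stirling's estimate $\Gamma(z+a)/\Gamma(z+b)\sim z^{a-b}$ as $z\to\infty$, applied to $\Gamma\bigl(\tfrac{n-\alpha}{2}\bigr)/\Gamma\bigl(\tfrac{n+\alpha+2}{2}\bigr)$ with $z=n/2$, $a=-\alpha/2$, $b=1+\alpha/2$.

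The whole argument is essentially a bookkeeping exercise built on classical identities; the only mild obstacle is carefully tracking the sign $(-1)^{(n-1)/2}$ through the reflection formula. The exclusion of odd integer $\alpha$ is precisely what keeps $\Gamma\bigl(\tfrac{1-\alpha}{2}\bigr)$ finite; at such $\alpha\in\{1,3,5,\ldots\}$ the function $g$ becomes a finite trigonometric polynomial and the identity has to be interpreted as a limiting case in which the poles of the numerator and denominator cancel.
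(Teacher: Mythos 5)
Your proof is correct, but it follows a genuinely different route from the paper's. The paper obtains the formula \emph{ab initio}: after the same symmetry reduction to $g_n=\frac1\pi\int_{-\pi/2}^{\pi/2}\cos^\alpha\theta\cos n\theta\,d\theta$, it sets $a_m=g_{2m+1}$, integrates by parts once to derive the two-term recurrence $a_m=-\frac{m-(\alpha+1)/2}{m+(\alpha+1)/2}\,a_{m-1}$, observes that $c_\alpha(-1)^m\Gamma(m-\frac{\alpha-1}{2})/\Gamma(m+\frac{\alpha+3}{2})$ satisfies the same recurrence, and finally pins down $c_\alpha$ from the Wallis-type integral $a_0=\frac1\pi\int_{-\pi/2}^{\pi/2}\cos^{\alpha+1}\theta\,d\theta=\Gamma(\frac{\alpha+2}{2})/\sqrt\pi\,\Gamma(\frac{\alpha+3}{2})$. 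You instead quote the closed-form Beta integral $\int_0^{\pi/2}\cos^\alpha\theta\cos n\theta\,d\theta=\frac{\pi\,\Gamma(\alpha+1)}{2^{\alpha+1}\Gamma(\frac{\alpha+n+2}{2})\Gamma(\frac{\alpha-n+2}{2})}$ and convert to the stated form by two applications of Euler reflection (once to $\Gamma(\frac{\alpha-n+2}{2})$ to produce $\Gamma(\frac{n-\alpha}{2})$ and the sign $(-1)^{(n-1)/2}$, once to absorb the resulting $\cos(\pi\alpha/2)$ into $\Gamma(\frac{1-\alpha}{2})$) together with Legendre duplication for $\Gamma(\alpha+1)$. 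I checked the algebra and it closes: the key intermediate identity you need, $\Gamma(\frac{\alpha+1}{2})\Gamma(\frac{1-\alpha}{2})\cos(\frac{\pi\alpha}{2})=\pi$, is exactly a reflection at $z=\frac{\alpha+1}{2}$, and the remaining factors cancel. The trade-off is clear: the paper's recurrence argument is self-contained and avoids invoking any tabulated integral at the cost of an extra step (guessing and verifying the solution of the recurrence), whereas your route is faster if you grant yourself the classical formula, but then needs two reflections and a duplication and careful sign bookkeeping to reach the paper's normalization. Both approaches degenerate gracefully at odd integer $\alpha$, where the numerator and denominator $\Gamma$'s develop compensating poles, as you correctly note.
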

\begin{proof}
$g_n=0$ for even $n$ is obvious.
For odd $n$, by the symmetry we have
\[
	g_n = \frac1{\pi} \int_{-\frac\pi2}^{\frac{\pi}2} \cos^{\alpha} \theta \cos n\theta d\theta
\]
Let $a_m := {g}_{2m+1}$ for $m\in \Z$.
We first show that there exists a constant $c_\alpha\in \R$ such that 
\begin{equation}\label{eq:app_1}
	a_m = c_\alpha (-1)^m \frac{\Gamma(m-\frac{\alpha-1}{2})}{\Gamma(m+\frac{\alpha+3}{2})}
\end{equation}
for $m\in \Z$. By integration by parts,
\begin{align*}
	a_m - a_{m-1} ={}&
	 \frac2{\pi(\alpha+1)} \int_{-\frac\pi2}^{\frac{\pi}2} \sin 2m\theta ( \cos^{\alpha+1} \theta)' d\theta\\
	 ={}& - \frac{4m}{\pi(\alpha+1)} \int_{-\frac\pi2}^{\frac{\pi}2} \cos^{\alpha} \theta \cos 2m\theta  \cos \theta d\theta\\
	 ={}&  - \frac{2m}{(\alpha+1)} (a_m + a_{m-1}).
\end{align*}
Hence, we obtain the recurrence relation $a_m = - \frac{m-\frac{\alpha+1}2}{m+\frac{\alpha+1}2} a_{m-1}$.
This shows \eqref{eq:app_1} because the right hand side satisfies the same relation.
Further, since
\[
	a_0 = \frac1{\pi} \int_{-\frac\pi2}^{\frac{\pi}2} \cos^{\alpha+1} \theta  d\theta = \frac{\Gamma(\frac{\alpha+2}2)}{\sqrt\pi \Gamma (\frac{\alpha+3}2)},
\]
we have $c_\alpha = \Gamma(\frac{\alpha+2}2)/ \sqrt{\pi} \Gamma(\frac{1-\alpha}2)$, which shows \eqref{eq:explicit}
together with \eqref{eq:app_1}.
The last assertion easily follows by means of the Stirling formula.
\end{proof}

A similar argument shows the following

\begin{proposition}
Let $\alpha>-1$ be not an odd integer.
Let 
\[
	g_n := \frac1{2\pi} \int_{-\pi}^\pi |\sin \theta |^{\alpha-1}\sin \theta \sin n\theta d\theta
\]
 for $n\in \Z$.
Then, $g_n=0$ for even $n$ and 
\begin{equation}\label{eq:explicit}
	g_n = \frac{\Gamma(\frac{\alpha+2}2) \Gamma(\frac{n-\alpha}2)}{\sqrt\pi 
	\Gamma(- \frac{\alpha-1}2) \Gamma(\frac{n+\alpha+2}2)}
\end{equation}
for odd $n$.
In particular, $g_n = O(|n|^{-\alpha-1})$ as $|n|\to\I$.
\end{proposition}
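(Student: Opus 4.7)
The strategy is to mirror the proof of the previous proposition. The vanishing for even $n$ will come from a symmetry: under $\theta \mapsto \pi - \theta$ we have $|\sin\theta|^{\alpha-1}\sin\theta \mapsto |\sin\theta|^{\alpha-1}\sin\theta$, while $\sin(n\theta) \mapsto \sin(n\pi - n\theta) = -\cos(n\pi)\sin(n\theta)$. For even $n$, $\cos(n\pi)=1$, which yields $g_n = -g_n$ after translating the integration interval by periodicity, hence $g_n=0$.

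For odd $n$, the cleanest route is to reduce directly to the previous proposition rather than redo the integration-by-parts recurrence. The plan is to apply the substitution $\theta = \pi/2 - \phi$. Then $\sin\theta = \cos\phi$, and for odd $n$ one has
\[
\sin(n\theta) = \sin\!\Bigl(\tfrac{n\pi}{2} - n\phi\Bigr) = (-1)^{(n-1)/2}\cos(n\phi),
\]
since $\sin(n\pi/2) = (-1)^{(n-1)/2}$ and $\cos(n\pi/2)=0$ for odd $n$. Using $2\pi$-periodicity to restore the integration interval to $[-\pi,\pi]$, this transforms the $g_n$ above into
\[
g_n = \frac{(-1)^{(n-1)/2}}{2\pi}\int_{-\pi}^{\pi} |\cos\phi|^{\alpha-1}\cos\phi \,\cos(n\phi)\,d\phi,
\]
which is precisely $(-1)^{(n-1)/2}$ times the cosine Fourier coefficient computed in the previous proposition. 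Multiplying by $(-1)^{(n-1)/2}$ cancels the analogous sign factor in \eqref{eq:explicit} for the cosine case and yields the stated formula.

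The asymptotic $g_n = O(|n|^{-\alpha-1})$ then follows from Stirling's formula applied to the ratio $\Gamma(\tfrac{n-\alpha}{2})/\Gamma(\tfrac{n+\alpha+2}{2})$, exactly as in the previous proposition. There is no genuine obstacle here: the only mildly delicate point is keeping track of the sign $(-1)^{(n-1)/2}$ and ensuring the domain shift is justified by $2\pi$-periodicity of the integrand. As an alternative, one can bypass the reduction and repeat the integration-by-parts derivation on $a_m := g_{2m+1}$, obtaining the same recurrence $a_m = -\frac{m-(\alpha+1)/2}{m+(\alpha+1)/2}\,a_{m-1}$ as before; the initial value
\[
a_0 = \frac{1}{\pi}\int_{-\pi/2}^{\pi/2} \sin^{\alpha+1}\phi \sin\phi\,d\phi \;\text{(after a shift)} = \frac{\Gamma(\frac{\alpha+2}{2})}{\sqrt{\pi}\,\Gamma(\frac{\alpha+3}{2})}
\]
then fixes the constant and gives \eqref{eq:explicit} without the $(-1)^{(n-1)/2}$ factor. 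Either route concludes the proof.
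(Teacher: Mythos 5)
Your primary argument is correct and takes a genuinely different route from the paper. You reduce the sine coefficient to the cosine coefficient of the previous proposition via the substitution $\theta = \pi/2 - \phi$, using $\sin(n(\pi/2 - \phi)) = (-1)^{(n-1)/2}\cos(n\phi)$ for odd $n$ and $2\pi$-periodicity of the integrand to shift the domain back to $[-\pi,\pi]$. The factor $(-1)^{(n-1)/2}$ thereby acquired cancels the identical sign factor in the cosine formula, giving the stated result cleanly. The paper instead redoes the integration-by-parts computation from scratch: setting $b_m := g_{2m+1}$, it derives the recurrence $b_m = +\frac{m-(\alpha+1)/2}{m+(\alpha+1)/2}\,b_{m-1}$ (note the sign) and matches the initial value $b_0 = \Gamma(\frac{\alpha+2}{2})/(\sqrt{\pi}\,\Gamma(\frac{\alpha+3}{2}))$. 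Your reduction is shorter and makes transparent the mechanism by which the $(-1)^{(n-1)/2}$ factor disappears; the paper's route is self-contained but repeats essentially the same manipulations.

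Your ``alternative route'' at the end, however, contains an error. You claim that redoing the integration by parts yields the \emph{same} recurrence $a_m = -\frac{m-(\alpha+1)/2}{m+(\alpha+1)/2}\,a_{m-1}$ as in the cosine case and then concludes that the $(-1)^{(n-1)/2}$ factor is absent. These two statements are incompatible: the $(-1)^m$ in the cosine formula is produced precisely by the minus sign in that recurrence, so a recurrence with the same sign and the same initial value would necessarily reproduce the sign factor. In fact the sine case gives the recurrence with the \emph{opposite} sign, $b_m = +\frac{m-(\alpha+1)/2}{m+(\alpha+1)/2}\,b_{m-1}$ (from $\sin((2m+1)\theta)+\sin((2m-1)\theta)=2\sin(2m\theta)\cos\theta$, integration by parts, and $\cos(2m\theta)\sin\theta = \tfrac12[\sin((2m+1)\theta)-\sin((2m-1)\theta)]$), and this is what eliminates the alternating sign. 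The displayed expression for $a_0$ also has a typo: after the shift the integrand should be $\cos^{\alpha+1}\phi$, not $\sin^{\alpha+1}\phi\,\sin\phi$. Since the alternative is inessential and your main argument stands, these are minor corrections, but the sign of the recurrence is not a detail one can wave away.
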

\begin{proof}
$g_n=0$ for even $n$ is obvious.
For odd $n$, by the symmetry we have
\[
	g_n = \frac1{\pi} \int_{0}^{\pi} \sin^{\alpha} \theta \sin n\theta d\theta
\]
Let $b_m:=g_{2m+1}$ for $m\in \Z$.
We have the recurrence relation $$b_m =  \frac{m-\frac{\alpha+1}2}{m+\frac{\alpha+1}2} b_{m-1}$$
since
\begin{align*}
	b_m + b_{m-1} ={}&
	 \frac2{\pi(\alpha+1)} \int_{0}^{\pi} \cos 2m\theta ( \sin^{\alpha+1} \theta)' d\theta\\
	 ={}& - \frac{4m}{\pi(\alpha+1)} \int_{0}^{\pi} \sin^{\alpha} \theta \sin 2m\theta  \sin \theta d\theta\\
	 ={}&  \frac{2m}{(\alpha+1)} (-b_m + b_{m-1}).
\end{align*}
Together with 
$b_0 = \frac{\Gamma(\frac{\alpha+2}2)}{\sqrt\pi \Gamma (\frac{\alpha+3}2)}$,
we obtain the result as in the previous case.
\end{proof}

\vskip3mm
\noindent {\bf Acknowledgments.} 
S.M. is partially supported by Sumitomo Foundation, Basic Science Research
Projects No.\ 161145 and by JSPS, Grant-in-Aid for Young Scientists (B) 17K14219.


\begin{bibdiv}
\begin{biblist}

\bib{GO}{article}{
      author={Ginibre, J.},
      author={Ozawa, T.},
       title={Long range scattering for nonlinear {S}chr\"odinger and {H}artree
  equations in space dimension {$n\geq 2$}},
        date={1993},
        ISSN={0010-3616},
     journal={Comm. Math. Phys.},
      volume={151},
      number={3},
       pages={619\ndash 645},
         url={http://projecteuclid.org/euclid.cmp/1104252243},
      review={\MR{1207269}},
}

\bib{HN02}{article}{
      author={Hayashi, Nakao},
      author={Naumkin, Pavel~I.},
       title={Large time behavior for the cubic nonlinear {S}chr\"odinger
  equation},
        date={2002},
        ISSN={0008-414X},
     journal={Canad. J. Math.},
      volume={54},
      number={5},
       pages={1065\ndash 1085},
         url={http://dx.doi.org/10.4153/CJM-2002-039-3},
      review={\MR{1924713}},
}

\bib{HN04}{article}{
      author={Hayashi, Nakao},
      author={Naumkin, Pavel~I.},
       title={On the asymptotics for cubic nonlinear {S}chr\"odinger
  equations},
        date={2004},
        ISSN={0278-1077},
     journal={Complex Var. Theory Appl.},
      volume={49},
      number={5},
       pages={339\ndash 373},
         url={http://dx.doi.org/10.1080/02781070410001710353},
      review={\MR{2073463}},
}

\bib{HN06}{article}{
      author={Hayashi, Nakao},
      author={Naumkin, Pavel~I.},
       title={Domain and range of the modified wave operator for
  {S}chr\"odinger equations with a critical nonlinearity},
        date={2006},
        ISSN={0010-3616},
     journal={Comm. Math. Phys.},
      volume={267},
      number={2},
       pages={477\ndash 492},
         url={http://dx.doi.org/10.1007/s00220-006-0057-6},
      review={\MR{2249776}},
}

\bib{HN11}{article}{
      author={Hayashi, Nakao},
      author={Naumkin, Pavel~I.},
       title={Global existence for the cubic nonlinear {S}chr\"odinger equation
  in lower order {S}obolev spaces},
        date={2011},
        ISSN={0893-4983},
     journal={Differential Integral Equations},
      volume={24},
      number={9-10},
       pages={801\ndash 828},
      review={\MR{2850366}},
}

\bib{HN15}{article}{
      author={Hayashi, Nakao},
      author={Naumkin, Pavel~I.},
       title={Logarithmic time decay for the cubic nonlinear {S}chr\"odinger
  equations},
        date={2015},
        ISSN={1073-7928},
     journal={Int. Math. Res. Not. IMRN},
      number={14},
       pages={5604\ndash 5643},
         url={http://dx.doi.org/10.1093/imrn/rnu102},
      review={\MR{3384451}},
}

\bib{HNST}{article}{
      author={Hayashi, Nakao},
      author={Naumkin, Pavel~I.},
      author={Shimomura, Akihiro},
      author={Tonegawa, Satoshi},
       title={Modified wave operators for nonlinear {S}chr\"odinger equations
  in one and two dimensions},
        date={2004},
        ISSN={1072-6691},
     journal={Electron. J. Differential Equations},
       pages={No. 62, 16 pp. (electronic)},
      review={\MR{2047418}},
}

\bib{HWN}{article}{
      author={Hayashi, Nakao},
      author={Wang, Huimei},
      author={Naumkin, Pavel~I.},
       title={Modified wave operators for nonlinear {S}chr\"odinger equations
  in lower order {S}obolev spaces},
        date={2011},
        ISSN={0219-8916},
     journal={J. Hyperbolic Differ. Equ.},
      volume={8},
      number={4},
       pages={759\ndash 775},
         url={http://dx.doi.org/10.1142/S0219891611002561},
      review={\MR{2864547}},
}

\bib{KT}{article}{
      author={Keel, Markus},
      author={Tao, Terence},
       title={Endpoint {S}trichartz estimates},
        date={1998},
        ISSN={0002-9327},
     journal={Amer. J. Math.},
      volume={120},
      number={5},
       pages={955\ndash 980},
  url={http://muse.jhu.edu/journals/american_journal_of_mathematics/v120/120.5keel.pdf},
      review={\MR{1646048}},
}

\bib{MM2}{article}{
      author={Masaki, Satoshi},
      author={Miyazaki, Hayato},
       title={Long range scattering for nonlinear schr\"odinger equations with
  general homogeneous nonlinearity},
        date={2016},
     journal={preprint},
      eprint={arXiv:1612.04524},
}

\bib{MS}{article}{
      author={Masaki, Satoshi},
      author={Segata, Jun-ichi},
       title={On the well-posedness of the generalized {K}orteweg--de {V}ries
  equation in scale-critical {$\hat L{}^r$}-space},
        date={2016},
        ISSN={2157-5045},
     journal={Anal. PDE},
      volume={9},
      number={3},
       pages={699\ndash 725},
         url={http://dx.doi.org/10.2140/apde.2016.9.699},
      review={\MR{3518534}},
}

\bib{MTT}{article}{
      author={Moriyama, Kazunori},
      author={Tonegawa, Satoshi},
      author={Tsutsumi, Yoshio},
       title={Wave operators for the nonlinear {S}chr\"odinger equation with a
  nonlinearity of low degree in one or two space dimensions},
        date={2003},
        ISSN={0219-1997},
     journal={Commun. Contemp. Math.},
      volume={5},
      number={6},
       pages={983\ndash 996},
         url={http://dx.doi.org/10.1142/S021919970300121X},
      review={\MR{2030566}},
}

\bib{N}{article}{
      author={Naumkin, P.~I.},
       title={The dissipative property of a cubic non-linear {S}chr\"odinger
  equation},
        date={2015},
        ISSN={0373-2436},
     journal={Izv. Ross. Akad. Nauk Ser. Mat.},
      volume={79},
      number={2},
       pages={137\ndash 166},
         url={http://dx.doi.org/10.4213/im8179},
      review={\MR{3352593}},
}

\bib{NS}{article}{
      author={Naumkin, Pavel~I.},
      author={S{\'a}nchez-Su{\'a}rez, Isahi},
       title={On the critical nongauge invariant nonlinear {S}chr\"odinger
  equation},
        date={2011},
        ISSN={1078-0947},
     journal={Discrete Contin. Dyn. Syst.},
      volume={30},
      number={3},
       pages={807\ndash 834},
         url={http://dx.doi.org/10.3934/dcds.2011.30.807},
      review={\MR{2784622}},
}

\bib{MR1419319}{book}{
      author={Runst, Thomas},
      author={Sickel, Winfried},
       title={Sobolev spaces of fractional order, {N}emytskij operators, and
  nonlinear partial differential equations},
      series={De Gruyter Series in Nonlinear Analysis and Applications},
   publisher={Walter de Gruyter \& Co., Berlin},
        date={1996},
      volume={3},
        ISBN={3-11-015113-8},
         url={http://dx.doi.org/10.1515/9783110812411},
      review={\MR{1419319}},
}

\bib{ShT}{article}{
      author={Shimomura, Akihiro},
      author={Tonegawa, Satoshi},
       title={Long-range scattering for nonlinear {S}chr\"odinger equations in
  one and two space dimensions},
        date={2004},
        ISSN={0893-4983},
     journal={Differential Integral Equations},
      volume={17},
      number={1-2},
       pages={127\ndash 150},
      review={\MR{2035499}},
}

\bib{MR2318286}{article}{
      author={Visan, Monica},
       title={The defocusing energy-critical nonlinear {S}chr\"odinger equation
  in higher dimensions},
        date={2007},
        ISSN={0012-7094},
     journal={Duke Math. J.},
      volume={138},
      number={2},
       pages={281\ndash 374},
         url={http://dx.doi.org/10.1215/S0012-7094-07-13825-0},
      review={\MR{2318286}},
}

\end{biblist}
\end{bibdiv}

\end{document}